\definecolor{rouge}{rgb}{0.7,0.00,0.00}
\definecolor{vert}{rgb}{0.00,0.5,0.00}
\definecolor{bleu}{rgb}{0.00,0.00,0.8}
\newtheorem{theorem}{Theorem}[section]
\newtheorem*{theorem*}{Theorem}
\newtheorem{lemma}[theorem]{Lemma}
\newtheorem{proposition}[theorem]{Proposition}
\newtheorem{condition}{Condition}
\newtheorem{conditionA}{A\kern-0.1mm}
\theoremstyle{definition}
\newtheorem{remark}[theorem]{Remark}
\def \eref#1{\hbox{(\ref{#1})}}
\numberwithin{equation}{section}
\def\geq{\geqslant}
\def\leq{\leqslant}
\def\RR{\mathbb{R}}
\def\PP{\mathbb{P}}
\def\EE{\mathbb{E}}
\def\vare{{\varepsilon}}
\def \eref#1{\hbox{(\ref{#1})}}
\def\EE{\mathbb{ E}}
\begin{document}

\title[Optimal convergence order for multi-scale SBE ]
{Optimal convergence order for multi-scale stochastic Burgers equation}

\author{Peng Gao}
\curraddr[Gao, P.]{ School of Mathematics and Statistics, and Center for Mathematics and Interdisciplinary Sciences, Northeast Normal University, Changchun 130024, China}
\email{gaopengjilindaxue@126.com}

\author{Xiaobin Sun}
\curraddr[Sun, X.]{ School of Mathematics and Statistics, and Research Institute of Mathematical Science, Jiangsu Normal University, Xuzhou, 221116, China}
\email{xbsun@jsnu.edu.cn}

\begin{abstract}
In this paper, we study the strong and weak convergence rates for multi-scale one-dimensional stochastic Burgers equation. Based on the techniques of Galerkin approximation, Kolmogorov equation and Poisson equation, we obtain the slow component strongly and weakly converges to the solution of the corresponding averaged equation with optimal orders 1/2 and 1 respectively. The highly nonlinear term in system brings us huge difficulties, we develop new technique to overcome these difficulties. To the best of our knowledge, this work seems to be the first result in which the optimal convergence orders in strong and weak sense for multi-scale stochastic partial differential equations with highly nonlinear term.
\end{abstract}

\date{\today}
\subjclass[2010]{ Primary 35R60}
\keywords{Optimal convergence order; Multi-scale; Stochastic Burgers equation; Averaging principle.}

\maketitle

\section{Introduction}

\vspace{0.1cm}
Many multi-scale problems arise from material sciences, chemistry, fluids dynamics, biology, ecology, climate dynamics and other application areas. Usually, this kind of systems usually consist of two components, which correspond to slow component $X^{\vare}$ and fast component $Y^{\vare}$ by using a small scale parament $\vare$. The issue how to describe asymptotic behavior of $(X^{\varepsilon},Y^{\varepsilon})$ as $\varepsilon\rightarrow0$ has attracted many people's attention. Averaging principles can help us study this kind of asymptotic behavior of $(X^{\varepsilon},Y^{\varepsilon})$, moreover, averaging principle provides a powerful tool for
simplifying nonlinear dynamical systems and obtain approximation solutions to differential equations arising from mechanics, mathematics, physics, control, and other areas. Furthermore, it can also help us understand and investigate the physical phenomenon described by nonlinear dynamical systems.

\vspace{0.1cm}
The theory of averaging principle of multi-scale system has a long history and rich results. The idea of averaging appeared in the perturbation theory developed by Clairaut, Laplace, and Lagrange in the 18th century. Then various averaging schemes (Gauss, Fatou, Delone-Hill) were widely applied in celestial mechanics in the 19th century. These were mainly formal techniques. Bogoliubov and Mitropolsky \cite{BM} first studied the averaging principle for the deterministic systems.

\vspace{0.1cm}
The averaging principle in the stochastic differential equations (SDEs for short) setup was first considered by Khasminskii in \cite{K1968}.
Generally speaking, a multi-scale stochastic system driven by Brownian motion has the following form
\begin{equation*}
\begin{array}{l}
(*)\left\{
\begin{array}{llll}
dX^{\varepsilon}_t=f(X^{\varepsilon}_t,Y^{\varepsilon}_t)dt+\sigma_{1}(X^{\varepsilon}_t, Y^{\varepsilon}_t)dW_t
\\ dY^{\varepsilon}_t=\frac{1}{\varepsilon}g(X^{\varepsilon}_t,Y^{\varepsilon}_t)dt+\frac{1}{\sqrt{\varepsilon}}\sigma_{2}(X^{\varepsilon}_t,Y^{\varepsilon}_t)dW_t.
\end{array}
\right.
\end{array}
\end{equation*}
\par
\it \textbf{Classical averaging principle of multi-scale stochastic system}:
\par
For system $(*)$, under some assumption of dissipation and ergodicity,
\textbf{\begin{eqnarray*}
\begin{array}{l}
\begin{array}{llll}
X^{\varepsilon}\rightarrow\bar{X},~~{\rm{as}}~\varepsilon\rightarrow0,
\end{array}
\end{array}
\end{eqnarray*}}
in weak sense, where $\bar{X}$ is the solution to the following averaged equation
\textbf{\begin{eqnarray*}
\begin{array}{l}
\begin{array}{llll}
d\bar{X}_t=\bar{f}(\bar{X}_t)dt+\bar{\sigma}_{1}(\bar{X}_t)dW_t
\end{array}
\end{array}
\end{eqnarray*}}
where $\bar{f}(x)=\int f(x,y)\mu_{x}(dy)$ and $\bar{\sigma}_{1}(x)=\left[\int \sigma_1(x,y)\sigma^{\ast}_1(x,y)\mu_x (dy)\right]^{1/2}$ are the averaged coefficients, with $\mu_{x}(dy)$ being the ergodic
invariant measure of the fast variables $Y_{x}$ with a frozen $x$ variable, namely,
$$dY_t=g(x,Y_t)dt+\sigma_{2}(x,Y_t)dW_t.$$
\rm
Then it is extended to stochastic partial differential equations (SPDEs for short) by Cerrai and Freidlin \cite{CF}. Since then, averaging principle for multi-scale SPDEs has become an active research area, we refer to \cite{BYY,B3,C1,FLL,PXY,WR,XML} and references therein for more interesting results on this topic.

\vspace{0.1cm}
It is well-known that a lot of models or equations in physics are accompanied by highly nonlinear term. Thus more and more multi-scale SPDEs with highly nonlinear term attracted much attention in this topic. For instance, stochastic one dimensional Burgers equations \cite{DSXZ}, stochastic Kuramoto-Sivashinsky equation \cite{GP1}, stochastic Schr\"{o}dinger equation \cite{GP2}, stochastic Klein-Gordon equation \cite{GP3}, stochastic Korteweg-de Vries equation \cite{GP4}, stochastic 2D Navier-Stokes equation \cite{GLSX}, stochastic 3D fractional Leray-$\alpha$ model \cite{LX2020}. Since the importance of the above models in physics, the rate of convergence has important practical significance, an important question arises from numerical simulation and scientific calculation:
\par
~~\par
\textbf{Can we obtain the optimal convergence order when $\varepsilon\rightarrow0$~?}
\par
~~\par
In this paper, we try to answer this question for multi-scale SPDEs with highly nonlinear term.
In the above references, the authors usually apply the technique of stopping time to deal with the nonlinear term, however it seems that this kind of technique will destroy the convergence rate, thus no satisfactory convergence rates were obtained in the mentioned references above.
In order to overcome these difficulties,  we develop \textit{new framework} to obtain the optimal convergence order. This kind of framework and method is successfully applied to multi-scales stochastic Burgers equations.

\vspace{0.1cm}
The Burgers equation
$$X_t-X_{\xi\xi}+XX_\xi=0$$
is used to study the turbulent fluid flow in \cite{B1974} and the references therein,
it arises as a 1-D simplification of the control system associated to the Navier-Stokes system, which has
been extensively studied these last years, see, for instance \cite{FI}. The Burgers equation has been
extensively used as a toy model to investigate properties of more complex systems in a rather simple
setting. This equation was introduced in the seminal paper \cite{Burgers} by Burgers. Both from a theoretical
and a numerical point of view, it already exhibits some key behaviors, such as interaction between the
nonlinearity and the smoothing effect. Since the stochastic Burgers equation can be used to model vortex lines in high-temperature superconductors, dislocations in disordered solids and kinetic roughening of interfaces in epitaxial growth, formation of
large-scale structures in the universe, constructive quantum field theory, etc., in recent years, many authors study the stochastic Burgers equation in \cite{B1994,DaD,Da1994,Da1995,DX,G1999,H2011,SWXY}.

\vspace{0.1cm}
In this paper, we consider the following multi-scale stochastic Burgers equation on the interval $[0,1]$:
\begin{equation}\left\{\begin{array}{l}\label{Equation}
\displaystyle
\frac{\partial X^{\varepsilon}_t(\xi)}{\partial t}=\big[\Delta X^{\varepsilon}_t(\xi)+\frac{1}{2}\frac{\partial}{\partial \xi}(X^{\varepsilon}_t(\xi))^2+F(X^{\varepsilon}_t, Y^{\varepsilon}_t)(\xi)\big]+\frac{\sqrt{Q_{1}}\partial W^{1}}{\partial t}(t,\xi),\quad X^{\varepsilon}_0=x \\
\frac{\partial Y^{\varepsilon}_t(\xi)}{\partial t}
=\frac{1}{\varepsilon}\big[\Delta Y^{\varepsilon}_t(\xi)+G(X^{\varepsilon}_t, Y^{\varepsilon}_t)(\xi)\big]+\frac{1}{\sqrt{\varepsilon}}\frac{\sqrt{Q_2}\partial W^{2}}{\partial t}(t,\xi),\quad Y^{\varepsilon}_0=y\\
X^{\varepsilon}_t(0)=X^{\varepsilon}_t(1)=Y^{\varepsilon}_t(0)
=Y^{\varepsilon}_t(1)=0, \end{array}\right.
\end{equation}
where $\varepsilon >0$ is a small parameter describing the ratio of time scales between the slow component $X^{\varepsilon}$
and fast component $Y^{\varepsilon}$. $\Delta$ is the Laplacian operator, measurable functions $F$ and $G$ satisfy some appropriate conditions. $Q_1, Q_2: H \rightarrow H$ are two non-negative selfadjoint trace operators. $Q_1$ is also a bounded linear operator. $\{W_t^1\}_{t\geq 0}$ and $\{W_t^2\}_{t\geq 0}$ are mutually independent cylindrical Wiener processes, which are defined on a complete filtered probability space $(\Omega,\mathscr{F},\{\mathscr{F}_{t}\}_{t\geq0},\mathbb{P})$.

\vspace{0.1cm}
In \cite{DSXZ}, Dong et. al. used the techniques of classical Khasminskii's time discretization and stopping time to prove the strong convergence, and applied the method of asymptotic expansion of solutions of Kolmogorov equations to study the weak convergence. Unfortunately, it does not achieve optimal convergence rate in the strong and weak sense. Recently, the technique of Poisson equation is widely used to study the optimal convergence rate in the multi-scale SDEs or SPDEs, we refer to \cite{B3,RSX2021,RX2021,SXX2022}. More applications of Poisson equation, see e. g. \cite{PV2001,PV2003} and references therein.

\vspace{0.1cm}
Thus, this paper aims to improve the strong and weak convergence orders obtained  in \cite{DSXZ} to optimal orders. More precisely, using the techniques of Galerkin approximation, Kolmogorov equation and Poisson equation, we obtain the slow component strongly and weakly converges to the solution of the corresponding averaged equation with optimal orders 1/2 and 1 respectively. It is worth to point that in order to overcome the difficulty cased by the high nonlinear term, some accurate estimates, such as the exponential order, of the solution are needed.
To the best of our knowledge, it seems that this paper is the first attempt in studying the optimal convergence rates for multi-scale SPDEs with highly nonlinear term.
Moreover, since the optimal convergence order is achieved, then the subsequent problems, such as the central limit type theorem or diffusion approximation, for multi-scale SPDEs with highly nonlinear term can be considered, which are left for our further works.

\vspace{0.1cm}
The rest of the paper is organized as follows. In Section 2, we give some notation and under some suitable assumptions, we formulate our main results. Section 3 are devoted to some a priori estimates of the finite dimensional frozen equation and study the corresponding Poisson equation. Section 4 and Section 5 are devoted to proving the strong convergence and weak convergence, respectively. In the Appendix 6, we recall some useful inequalities and give the detailed proofs of the Galerkin approximation. Throughout this paper, $C$, $C_{T}$ and $C_{p,T}$ stand for constants whose value may change from line to line, and $C_{T}$ and $C_{p,T}$ is used to emphasize  that the constant depend on $T$ and $p,T$ respectively.

\section{Main results} \label{Sec Main Result}

\subsection{Notations}
For any $p\geq 1$, let $L^p(0,1)$ be the space of $p$-th integrable real-valued functions on the interval $[0,1]$. Denote $H=\{f\in L^2(0,1): f(0)=f(1)=0\}$ with the usual norm $|\cdot|$ and the inner product  $\langle\cdot,\cdot\rangle$, respectively. $\mathscr{B}(H)$ is the collection of all measurable function $\varphi(x): H\rightarrow \RR$.

For any $k\in \mathbb{N}_{+}$, define
$$C^k(H):=\{\varphi\in \mathscr{B}(H): \varphi \mbox{ and all its Fr\'{e}chet derivative up to order } k \mbox{ are continuous}\},$$
$$C^k_b(H):=\{\varphi\in C^k(H): \mbox{ for } 1\le i\le k, \mbox{ the}~i\mbox{-th}~\mbox{Fr\'{e}chet derivative of } \varphi \mbox{ are bounded}\}.$$

For any $\varphi\in C^3(H)$, by Riesz representation theorem, we often identify the first Fr\'{e}chet derivative $D\varphi(x)\in \mathcal{L}(H, \RR)\cong H$, the second derivative $D^2\varphi(x)$ as a linear operator in $\mathcal{L}(H,H)$ and the third derivative $D^3\varphi(x)$ as a linear operator in $\mathcal{L}(H,\mathcal{L}(H,H))$, i.e.,
\begin{eqnarray*}
&&D\varphi(x) \cdot h = \langle D\varphi(x), h \rangle,\quad h\in H,\\
&&D^2\varphi(x) \cdot (h,k) = \langle D^2\varphi(x)\cdot h, k \rangle,  \quad  h,k \in H,\\
&&D^3\varphi(x) \cdot (h,k,l)= [D^3\varphi(x)\cdot h]\cdot (k,l),\quad h,k,l\in H.
\end{eqnarray*}

For $k\in\mathbb{N}_{+}$, $W^{k, p}(0,1)$ is the Sobolev space
of all functions in $L^p(0,1)$ whose differentials belong to $L^p(0,1)$ up to the order $k$. The usual Sobolev space $W^{k, p}(0,1)$ can be extended to
the $W^{s, p}(0,1)$, for $s\in\mathbb{R}$. Set $H^k(0,1):=W^{k, 2}(0,1)$
and denote by $H^1_0(0,1)$ the subspace of $H^1(0,1)$ of all functions
whose trace at $0$ and $1$ vanishes.

Denote Laplacian operator $\Delta$ by
\begin{align*}
Ax := \Delta x := \frac{\partial^2}{\partial \xi^2}x,
\quad  x\in \mathscr{D}(A)=H^2(0,1) \cap H^1_0(0,1),
\end{align*}
where $H^1_0(0,1)$ the subspace of $H^1(0,1)$ of all functions
whose trace at $0$ and $1$ vanishes. It is well known that $\Delta$ is the infinitesimal generator of a strongly continuous semigroup
$\{e^{t \Delta}\}_{t\geq0}$. The eigenfunctions of $\Delta$ are given by
$e_k(\xi)=\sqrt{2}\sin(k\pi\xi)$, $\xi\in[0,1],k\in \mathbb{N}_{+}$,
with the corresponding eigenvalues $-\lambda_k$ with $\lambda_k = k^2\pi^2$.

For any $s\in\RR$, let $(-A)^s$ be the power of the operator $-A$, i.e.,
 $$H^s:=\mathscr{D}((-A)^{s/2}):=\left\{u=\sum_{k\in\mathbb{N}_{+}}u_ke_k: u_k\in \mathbb{R},~\sum_{k\in\mathbb{N}_{+}}\lambda_k^{s}u_k^2<\infty\right\}$$
and
 $$(-A)^{s/2}u:=\sum_{k\in \mathbb{N}}\lambda_k^{s/2} u_ke_k,~~u\in\mathscr{D}((-A)^{s/2}),$$
with the associated norm $\|u\|_{s}:=|(-A)^{s/2}u|=\left(\sum_{k\in\mathbb{N}_{+}}\lambda_k^{s} u^2_k\right)^{1/2}$. It is easy to see $\|\cdot\|_0=|\cdot|$ and the following inequalities hold:
\begin{eqnarray}
&&\|e^{tA}x\|_{\sigma_2}\leq C_{\sigma_1,\sigma_2}t^{-\frac{\sigma_2-\sigma_1}{2}}e^{-\frac{\lambda_1 t}{2}}\|x\|_{\sigma_1},\quad x\in H^{\sigma_2},\sigma_1\leq\sigma_2, t>0;\label{P3}\\
&&|e^{tA}x-x |\leq Ct^{\frac{\sigma}{2}}\|x\|_{\sigma},\quad x\in H^{\sigma}, \sigma\geq 0,t\geq 0.\label{P4}
\end{eqnarray}

Define the bilinear operator $B(x,y): L^2(0, 1) \times H^1_0(0, 1)\rightarrow H^{-1}_0(0, 1)$ by
$$ B(x,y)=x\cdot\partial_{\xi} y,$$
and the trilinear operator $ b(x,y,z): L^2(0, 1) \times H^1_0(0, 1)\times L^2(0, 1) \rightarrow \RR$ by
$$ b(x,y,z)
=\int_0^1x(\xi) \partial_\xi y(\xi)z(\xi) d\xi.$$
For convenience, for $x\in H^1_0(0, 1)$, set $B(x)=B(x,x)$ (Some properties of $b$ and $B$ can be founded in the Appendix).

\vskip 0.3cm

With the above notations, the system \eref{Equation} can be rewritten as: 
\begin{equation}\left\{\begin{array}{l}\label{main equation}
\displaystyle
dX^{\vare}_t=[AX^{\vare}_t+B(X^{\vare}_t)+F(X^{\vare}_t, Y^{\vare}_t)]dt+\sqrt{Q_1}dW^{1}_t,\quad X^{\vare}_0=x;\\
dY^{\vare}_t=\frac{1}{\vare}[AY^{\vare}_t+G(X^{\vare}_t, Y^{\vare}_t)]dt+\frac{1}{\sqrt{\vare}}\sqrt{Q_2}dW^{2}_t,\quad Y^{\vare}_0=y;\\
X^{\vare}_t(0)=X^{\vare}_t(1)=Y^{\vare}_t(0)=Y^{\vare}_t(1)=0,\end{array}\right.
\end{equation}
where $W^{1}_t$ and $W^{2}_t$ are two independent  cylindrical Wiener processes given by
\begin{align} \label{Q1WienerPro}
W^{i}_t=\sum^\infty_{k=1}\beta^{i,k}_te_k, \quad t\geq0,\quad i=1,2,
\end{align}
here $\{\beta^{i,k}\}_{k\in \mathbb{N}_{+}}$ are mutually independent one-dimensional standard Brownian motions, $i=1,2$. Assume that $W^{1}_t$ and $W^{2}_t$ are independent, $Q_i e_k=\alpha_{i,k}e_k$, $i=1,2$, $k\in\mathbb{N}_{+}$.

\subsection{Assumptions}
Now, we assume the following conditions  on the coefficients $F, G: H\times H \rightarrow H$ throughout the paper:
\begin{conditionA}\label{A1}
There exist positive constants $C$, $L_F$ and $L_{G}$ satisfying $\lambda_1-2L_F>0$ and $\lambda_1-L_G>0$ such that for any $x_1,x_2,y_1,y_2\in H$,
\begin{eqnarray}
&&\left|F(x_1,y_1)-F(x_2,y_2)\right|\leq L_F|x_1-x_2|+C|y_1-y_2|,\label{LF}\\
&&\left|G(x_1, y_1)-G(x_2, y_2)\right|  \leq C|x_1-x_2| + L_{G}|y_1-y_2|,\label{LG}\\
&&\sup_{y\in H}|F(x,y)|\leq L_F|x|+C,\quad \sup_{x\in H}|G(x,y)|\leq L_G|y|+C. \label{F2.21}
\end{eqnarray}
\end{conditionA}

\begin{conditionA}\label{A2}
Assume that there exists $\tau\in (0,2)$ such that the following directional derivatives are well-defined and satisfy:
\begin{equation}\left\{\begin{array}{l}\label{RK}
\displaystyle
|D_{x}K(x,y)\cdot h|\leq C |h| \quad \text{and} \quad |D_{y}K(x,y)\cdot h|\leq C |h|,\quad \forall x,y, h\in H ;\\
|D_{xx}K(x,y)\cdot(h,k)|\leq C |h|\|k\|_{\tau},\quad \forall x,y, h\in H, k\in H^{\tau} ; \\
|D_{yy}K(x,y)\cdot(h,k)|\leq C |h|\|k\|_{\tau},\quad \forall x,y, h\in H, k\in H^{\tau} ;   \\
|D_{xy}K(x,y)\cdot (h,k)|\leq C |h|\|k\|_{\tau}, \quad \forall x,y, h\in H, k\in H^{\tau} ; \\
|D_{xy}K(x,y)\cdot (h,k)|\leq C \|h\|_{\tau}|k|, \quad \forall x,y, k\in H, h\in H^{\tau} ;\\
|D_{xyy}K(x,y)\cdot(h,k,l)|\leq C|h|\|k\|_{\tau}\|l\|_{\tau}, \quad \forall x,y, h\in H, k,l\in H^{\tau} ;  \\
|D_{yyy}K(x,y)\cdot(h,k,l)|\leq C|h|\|k\|_{\tau}\|l\|_{\tau}, \quad \forall x,y, h\in H, k,l\in H^{\tau} ; \\
|D_{xxy}K(x,y)\cdot(h,k,l)|\leq C|h|\|k\|_{\tau}\|l\|_{\tau}, \quad \forall x,y, h\in H, k,l\in H^{\tau} ; \\
|D_{xxx}K(x,y)\cdot(h,k,l)|\leq C|h|\|k\|_{\tau}\|l\|_{\tau}, \quad \forall x,y, h\in H, k,l\in H^{\tau},
\end{array}\right.
\end{equation}
where  $K$ is taken by $F$ and $G$, $D_{x}K(x,y)\cdot h$ is the directional derivative of $K(x,y)$ in the direction $h$ with respective to $x$, other notations can be interpreted similarly.
\end{conditionA}

\begin{conditionA}\label{A3}
There exist two constants $\alpha,\beta\in (0,1)$ such that $F: H^{\beta}\times H^{\beta}\rightarrow H^{\alpha}$ and for any $x,y,y_1,y_2\in H^{\beta}$,
\begin{eqnarray}
&&\|F(x, y_1)-F(x, y_2)\|_{\alpha}\leq C(1+\|x\|_{\beta}+\|y_1\|_{\beta}+\|y_2\|_{\beta})\|y_1-y_2\|_{\beta},\label{FH}\\
&&\|F(x,y)\|_{\alpha}\leq C(1+\|x\|_{\beta}+\|y\|_{\beta}).\label{BF}
\end{eqnarray}
\end{conditionA}

\begin{conditionA}\label{A4} Suppose that
\begin{eqnarray}
\Sigma_{k}\lambda_k\alpha_{1,k}<\infty\quad \text{and}\quad\Sigma_{k}\lambda^{\beta-1}_k\alpha_{2,k}<\infty,\label{Noise}
\end{eqnarray}
where $\beta$ is the constant in assumption \ref{A3}.
\end{conditionA}

\begin{remark} Under the conditions \eref{LF} and \eref{LG}, for any given $\varepsilon>0$ and initial value $(x,y)\in H\times H$, it is easy to prove the system \eref{main equation} exists a unique mild solution $(X^{\vare}_t, Y^{\vare}_t)\in H\times H$, i.e., $\PP$-a.s.,
\begin{equation}\left\{\begin{array}{l}\label{A mild solution}
\displaystyle
X^{\varepsilon}_t=e^{tA}x+\int^t_0e^{(t-s)A}B(X^{\varepsilon}_s)ds+\int^t_0e^{(t-s)A}F(X^{\varepsilon}_s, Y^{\varepsilon}_s)ds+\int^t_0 e^{(t-s)A}\sqrt{Q_1}dW_s^1,\\
Y^{\varepsilon}_t=e^{tA/\varepsilon}y+\frac{1}{\varepsilon}\int^t_0e^{(t-s)A/\varepsilon}G(X^{\varepsilon}_s,Y^{\varepsilon}_s)ds
+\frac{1}{\sqrt{\varepsilon}}\int^t_0 e^{(t-s)A/\varepsilon}\sqrt{Q_2}dW_s^2.
\end{array}\right.
\end{equation}
\end{remark}
\begin{remark}
The condition $\lambda_{1}-L_{G}>0$ in \ref{A1} is called the strong dissipative condition, which is used to prove the existence and uniqueness of the invariant measures and the exponential ergodicity of the transition semigroup of the frozen equation. The condition $\lambda_{1}-2L_{F}>0$ and \eref{F2.21} in  \ref{A1} is used to ensure the solution $X^{\varepsilon}_t$ and $\bar{X}_t$ have finite exponential moment. 
The condition \ref{A2}  is used to study the regularity of the solution of the corresponding Poisson equation. The condition  \ref{A3} is used to obtain the optimal strong and weak convergence order. The condition \ref{A4} is used to study the regularity of the stochastic convolutions $\int^t_0 e^{(t-s)A}\sqrt{Q_1}dW_s^1$ and $\int^t_0 e^{(t-s)A}\sqrt{Q_2}dW_s^2$.
\end{remark}

\begin{remark} \label{EA2}
Here we give an example that the condition \ref{A2} holds.
Let $K$ be the Nemytskii operator associated with a function $f:\RR\times \RR\rightarrow \RR$, i.e., $K(x,y)(\xi):=f(x(\xi),y(\xi))$. Then the following directional derivatives are well-defined and belong to $H$,
\begin{equation}\left\{\begin{array}{l}\label{RKE}
\displaystyle
D_{x}K(x,y)\cdot h=\partial_x f(x,y)h \quad \text{and} \quad D_{y}K(x,y)\cdot h=\partial_y f(x,y)h ,\quad \forall x,y, h\in H ;   \\
D_{xx}K(x,y)\cdot(h,k)=\partial_{xx}f(x,y)hk,\quad \forall x,y, h, k\in L^{\infty}(0,1) ;   \\
D_{yy}K(x,y)\cdot(h,k)=\partial_{yy}f(x,y)hk,\quad \forall x,y, h\in H, k\in L^{\infty}(0,1) ;  \\
D_{xy}K(x,y)\cdot (h,k)=\partial_{xy}f(x,y)hk, \quad \forall x,y, h\in H, k\in L^{\infty}(0,1) ;  \\
D_{xy}K(x,y)\cdot (h,k)=\partial_{xy}f(x,y)hk, \quad \forall x,y, k\in H, h\in L^{\infty}(0,1) ;\\
D_{xyy}K(x,y)\cdot(h,k,l)=\partial_{xyy}f(x,y)hkl, \quad \forall x,y, h\in H, k,l\in L^{\infty}(0,1) ; \\
D_{yyy}K(x,y)\cdot(h,k,l)=\partial_{yyy}f(x,y)hkl, \quad \forall x,y, h\in H, k,l\in L^{\infty}(0,1) ; \\
D_{xxy}K(x,y)\cdot(h,k,l)=\partial_{xxy}f(x,y)hkl, \quad \forall x,y, h\in H, k,l\in L^{\infty}(0,1);\\
D_{xxx}K(x,y)\cdot(h,k,l)=\partial_{xxx}f(x,y)hkl, \quad \forall x,y, h\in H, k,l\in L^{\infty}(0,1),
\end{array}\right.
\end{equation}
where we assume that all the partial derivatives of $f$ appear above are all uniformly bounded. Note that $H^{\tau}\subset L^{\infty}(0,1)$ for any $\tau>1/2$, thus it is easy to see that $K$ satisfies assumption \ref{A2} with any $\tau\in (1/2,2)$.
\end{remark}

\begin{remark}
Here we give an example that the condition \ref{A3} holds. Recall the notation in Remark \ref{EA2}, if $\partial_y f(\cdot,\cdot):\RR\times \RR\rightarrow \RR$ is a global Lipschitz function, we refer to \cite[Proposition 2.1]{B3} or \cite[Section 3.2]{BD}, it follows that for any $\alpha\in (0,1)$, $\tau\in (0, 1-\alpha)$,
\begin{eqnarray*}
\|(-A)^{\alpha/2}\left[f(x,y_1)-f(x,y_2)\right]\|_{L^2}\leq\!\!\!\!\!\!\!\!&&C\Big(1+\|(-A)^{\frac{\alpha+\tau}{2}}x\|_{L^4}+\|(-A)^{\frac{\alpha+\tau}{2}}y_1\|_{L^4}\nonumber\\
&&\quad+\|(-A)^{\frac{\alpha+\tau}{2}}y_2\|_{L^4}\Big)\|(-A)^{\frac{\alpha+\tau}{2}}(x-y)\|_{L^4},
\end{eqnarray*}
where $\|g\|_{L^p}:=\left[\int^1_0 |g(\xi)|^p d\xi \right]^{1/p}$. Then by Sobolev inequality $\|x\|_{L^4}\leq C\|x\|_{\frac{1}{4}}$, we have
\begin{eqnarray*}
\|F(x,y_1)-F(x,y_2)\|_{\alpha}\leq\!\!\!\!\!\!\!\!&& C\Big(1+\|x\|_{\alpha+\tau+1/4}\nonumber\\
&&\quad+\|y_1\|_{\alpha+\tau+1/4}+\|y_2\|_{\alpha+\tau+\frac{1}{4}}\Big)\|x-y\|_{\alpha+\tau+\frac{1}{4}}.
\end{eqnarray*}
Furthermore, if $f(\cdot,\cdot):\RR\times \RR\rightarrow \RR$ is a global Lipschitz function, then it follows
\begin{eqnarray*}
\|F(x,y)\|_{\alpha}\leq\!\!\!\!\!\!\!\!&&C\Big(1+\|(-A)^{\frac{\alpha+\tau}{2}}x\|_{L^2}+\|(-A)^{\frac{\alpha+\tau}{2}}y\|_{L^2}\Big)\nonumber\\
\leq\!\!\!\!\!\!\!\!&&C\Big(1+\|x\|_{\alpha+\tau}+\|y\|_{\alpha+\tau}\Big).
\end{eqnarray*}
Consequently, condition \ref{A3} holds for any $\alpha\in (0,3/4)$ and $\beta=\alpha+\tau+1/4$ with $\tau<3/4-\alpha$.
\end{remark}

\subsection{Main results} Let $\bar{X}$ be the solution of the corresponding averaged equation:
\begin{equation}\left\{\begin{array}{l}
\displaystyle d\bar{X}_{t}=\left[A\bar{X}_{t}+B(\bar{X}_{t})+\bar{F}(\bar{X}_{t})\right]dt+\sqrt{Q_1}d W_{t}^1,\\
\bar{X}_{0}=x\in H,\end{array}\right. \label{1.3}
\end{equation}
where the averaged coefficient $\bar{F}(x):=\int_{H}F(x,y)\mu^{x}(dy)$ with $\mu^{x}$ is the unique invariant measure of the transition semigroup of the frozen equation
\begin{eqnarray}\label{FZE}
\left\{ \begin{aligned}
&dY_{t}=\left[AY_{t}+G(x,Y_{t})\right]dt+\sqrt{Q_2}d W_{t}^2,\\
&Y_{0}=y\in H.
\end{aligned} \right.
\end{eqnarray}

\vspace{0.1cm}
Now, we state our main result here.
\begin{theorem} (\textbf{Strong convergence})\label{main result1}
Suppose  that assumptions \ref{A1}-\ref{A4} hold. For any initial value $(x,y)\in H^{\theta}\times H$ with $\theta\in (0,1]$, $p\geq 2, T>0$ and small $\vare>0$, there exists a constant $C>0$ depending on $p,T,|x|,\|x\|_{\theta},|y|$ such that
\begin{align}
\mathbb{E} \left(\sup_{t\in [0,T]} |X_{t}^{\vare}-\bar{X}_{t}|^{p} \right)\leq C\vare^{p/2}. \label{ST2}
\end{align}
\end{theorem}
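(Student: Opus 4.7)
The plan is to estimate $X^{\varepsilon}-\bar X$ via comparison of their mild equations, treating the Burgers nonlinearity $B$ and the oscillatory drift $F$ by complementary techniques. I would first carry out a Galerkin projection onto $\mathrm{span}\{e_1,\dots,e_n\}$, establish the rate $\varepsilon^{1/2}$ for the finite-dimensional truncation with a constant independent of $n$, and then pass to the limit using the Galerkin approximation estimates of the Appendix (which invoke \ref{A4}). Subtracting the mild forms in \eref{A mild solution} and \eref{1.3} gives
$$X^{\varepsilon}_t-\bar X_t=\int_0^t e^{(t-s)A}\bigl[B(X^{\varepsilon}_s)-B(\bar X_s)\bigr]\,ds+\int_0^t e^{(t-s)A}\bigl[F(X^{\varepsilon}_s,Y^{\varepsilon}_s)-\bar F(\bar X_s)\bigr]\,ds,$$
and I split the $F$--piece as $F(X^{\varepsilon}_s,Y^{\varepsilon}_s)-\bar F(\bar X_s)=\bigl[F(X^{\varepsilon}_s,Y^{\varepsilon}_s)-\bar F(X^{\varepsilon}_s)\bigr]+\bigl[\bar F(X^{\varepsilon}_s)-\bar F(\bar X_s)\bigr]$. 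The second bracket inherits the Lipschitz property from \ref{A1} and is swallowed into the Grönwall loop, while the first is a centred fluctuation to be treated by the Poisson equation method.

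Exponential ergodicity of the frozen process \eref{FZE}, guaranteed by $\lambda_1-L_G>0$, makes $\mu^{x}$ unique and allows me to solve the cell problem
$$-\mathcal{L}_2(x)\Phi(x,y)=F(x,y)-\bar F(x),\qquad \int_H \Phi(x,y)\,\mu^{x}(dy)=0,$$
where $\mathcal{L}_2(x)$ is the generator of the frozen fast dynamics; assumption \ref{A2} then produces the regularity estimates for $\Phi, D_x\Phi, D_y\Phi, D_{xx}\Phi, D_{yy}\Phi, D_{xy}\Phi$ that Section 3 will require. Applying Itô's formula to $\varepsilon\Phi(X^{\varepsilon,n}_t,Y^{\varepsilon,n}_t)$ in the Galerkin system and using $\mathcal{L}_2\Phi=-(F-\bar F)$ -- the $Q_2$-Itô correction cancelling against the $1/\varepsilon$ scaling on the fast generator -- rearranges to
$$\int_0^t\!\bigl[F-\bar F(X^{\varepsilon}_s)\bigr]ds=\varepsilon\bigl[\Phi(x,y)-\Phi(X^{\varepsilon}_t,Y^{\varepsilon}_t)\bigr]+\varepsilon\!\int_0^t\! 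D_x\Phi\cdot\bigl[AX^{\varepsilon}+B(X^{\varepsilon})+F\bigr]ds+\varepsilon M^{(1)}_t+\sqrt{\varepsilon}\,M^{(2)}_t+\varepsilon R_t,$$
with $M^{(1)}_t,M^{(2)}_t$ the two noise martingales and $R_t$ the $Q_1$-Itô correction. Taking $L^p$-norms, BDG applied to $M^{(2)}_t$ yields $O(\sqrt{\varepsilon})$, and the remaining pieces are $O(\varepsilon)$, provided I have uniform-in-$\varepsilon$ (and in $n$) moment bounds for $X^{\varepsilon}$ up to $\|\cdot\|_1$ and suitable moments for $Y^{\varepsilon}$.

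The main obstacle is the Burgers nonlinearity. The antisymmetry $\langle B(u,v),v\rangle=0$ reduces
$$\langle B(X^{\varepsilon})-B(\bar X),X^{\varepsilon}-\bar X\rangle=\langle B(X^{\varepsilon}-\bar X,\bar X),X^{\varepsilon}-\bar X\rangle,$$
and the standard trilinear estimate together with Young's inequality yield a bound of the schematic form
$$\bigl|\langle B(X^{\varepsilon}-\bar X,\bar X),X^{\varepsilon}-\bar X\rangle\bigr|\leq \tfrac14\|X^{\varepsilon}-\bar X\|_1^{2}+C\|\bar X\|_1^{2}\,|X^{\varepsilon}-\bar X|^{2}.$$
The coefficient $\|\bar X_s\|_1^{2}$ that then enters the Grönwall exponent is precisely where the usual stopping-time cut-off at the level of $\|\bar X\|_1$ would destroy the rate. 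Instead I would rely on the exponential moment estimates
$$\EE\exp\!\Bigl(c\!\int_0^T\!\|\bar X_s\|_1^{2}\,ds\Bigr)<\infty\quad\text{and}\quad \EE\exp\!\Bigl(c\!\int_0^T\!\|X^{\varepsilon}_s\|_1^{2}\,ds\Bigr)<\infty,$$
both secured by the strict dissipation $\lambda_1-2L_F>0$ from \ref{A1}, and close the estimate through a stochastic Grönwall lemma without any truncation. Combining this with the $O(\sqrt{\varepsilon})$ fluctuation bound from the Poisson step and the Lipschitz control of $\bar F(X^{\varepsilon})-\bar F(\bar X)$ gives $\EE\sup_{t\in[0,T]}|X^{\varepsilon,n}_t-\bar X^n_t|^{p}\leq C\varepsilon^{p/2}$, and the Galerkin limit delivers \eref{ST2}.
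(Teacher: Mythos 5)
Your overall strategy --- Galerkin truncation with $n$-uniform constants, an energy/Gr\"onwall estimate for the Burgers part controlled by exponential moments instead of stopping times, and a Poisson-equation corrector for the fluctuation $F(X^{\varepsilon},Y^{\varepsilon})-\bar F(X^{\varepsilon})$ --- is exactly the paper's. But two steps, as written, do not close. First, your corrector identity is a representation of the \emph{vector} $\int_0^t\big[F(X^{\varepsilon}_s,Y^{\varepsilon}_s)-\bar F(X^{\varepsilon}_s)\big]ds$, obtained by applying It\^o's formula to $\varepsilon\Phi(X^{\varepsilon}_t,Y^{\varepsilon}_t)$ alone. What actually appears in your estimate is either $\int_0^t e^{(t-s)A}\big[F-\bar F\big]ds$ (mild form) or $\int_0^t\langle F-\bar F,\,Z_s\rangle ds$ with $Z_s=X^{\varepsilon}_s-\bar X_s$ (energy form); in both cases the fluctuation is integrated against an $s$-dependent weight, so the plain time-integral identity cannot be substituted. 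The paper resolves this by applying It\^o's formula to the pairing $\langle\Phi_m(X^{m,\varepsilon}_t,Y^{m,\varepsilon}_t),Z^{m,\varepsilon}_t\rangle$ (see \eref{ID3}), which produces $\frac1\varepsilon\int_0^t\langle\mathscr{L}^m_2\Phi_m,Z_s\rangle ds$ directly, at the cost of extra cross terms involving $AZ_s$, $B^m(X^{m,\varepsilon}_s)-B^m(\bar X^m_s)$ and $F^m-\bar F^m$; controlling these is precisely where the $\|\cdot\|_2$-estimates of Lemma \ref{Xbar2} and \eref{Xvare2} and condition \ref{A3} are needed, none of which your outline accounts for.

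Second, your Gr\"onwall closure does not work with the constant you produce. Your trilinear bound puts $C\|\bar X_s\|_1^2$ into the exponent with a \emph{fixed} constant $C$ coming from Young's inequality, whereas the exponential moment $\EE\exp\big(c\int_0^T\|\bar X_s\|_1^2ds\big)<\infty$ is only available for $c$ below the explicit threshold $\frac{\lambda_1-2L_F}{4\|Q_1\|}$ (Lemma \ref{BarXgamma}(2)), and there is no reason for your $C$ (which moreover grows with $p$ once you take $p$-th moments) to lie below it. The paper avoids this by using the weaker estimate $\|B(x)-B(y)\|_{-1}\le C(\|x\|_\gamma+\|y\|_\gamma)|x-y|$ with $\gamma<1$, interpolating so that $C_p\|\cdot\|^2_{\gamma}\le C_p|\cdot|^2+\delta\|\cdot\|_1^2$ with $\delta$ as small as needed, restricting first to $T$ small enough that $C_pT\le\delta$, and then iterating to arbitrary $T$ via the Markov property. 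Your claim to close the estimate ``without any truncation'' in a single pass over all of $[0,T]$ is therefore not justified as stated.
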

\begin{remark}
Instead of using the technique of stopping time in \cite{DSXZ}, through by some accurate estimations of the solution and using the skill of Poisson equation,  we obtain the strong convergence order $1/2$, which is the optimal order in the strong sense (see \cite[Example 1]{L2010}).
\end{remark}
\begin{theorem} (\textbf{Weak convergence})\label{main result 2}
Suppose that the assumptions \ref{A1}-\ref{A4} hold. Then for any test function $\phi\in C^3_b(H)$, initial value $(x,y)\in H^{\theta}\times H$ with $\theta\in (0,1]$, $T>0$ and $\vare>0$, there exists a constant $C>0$ depending on $T,|x|,\|x\|_{\theta},|y|$ such that
\begin{align}
\sup_{t\in [0,T]}\left|\mathbb{E}\phi(X_{t}^{\vare})-\EE\phi(\bar{X}_{t})\right|\leq C\vare. \label{WC}
\end{align}
\end{theorem}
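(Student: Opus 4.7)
The plan is to reduce the weak error to a time integral of a fast--variable fluctuation, and then to kill that fluctuation via the Poisson equation associated with the frozen fast generator, each remaining boundary/correction term carrying an explicit factor of $\varepsilon$.

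First, I would introduce the Kolmogorov semigroup of the averaged equation: let $\bar u(t,x):=\mathbb{E}\phi(\bar X_t^x)$, which, by standard second--order differentiability theory for \eqref{1.3} (using Assumption \ref{A2} for the regularity of $\bar F$ inherited from the Poisson equation, and the smoothing of $e^{tA}$), belongs to $C_b^{1,3}([0,T]\times H)$ with bounds depending only on $\phi$ and the exponential moment bounds of $\bar X$. Then I set $u(t,x):=\bar u(T-t,x)$; this solves the backward Kolmogorov equation $\partial_t u+\bar{\mathcal L}u=0$, $u(T,x)=\phi(x)$, where
\[
\bar{\mathcal L}u(x)=\langle Ax+B(x)+\bar F(x),Du(x)\rangle+\tfrac12\Tr\bigl(Q_1 D^2u(x)\bigr).
\]
Applying Itô's formula to $u(t,X_t^{\vare})$, using that $X^{\vare}$ has the same drift as $\bar X$ except for $F(X_s^{\vare},Y_s^{\vare})$ in place of $\bar F(X_s^{\vare})$, and taking expectation gives the key identity
\[
\mathbb{E}\phi(X_T^{\vare})-\mathbb{E}\phi(\bar X_T)=\mathbb{E}\int_0^T\bigl\langle F(X_s^{\vare},Y_s^{\vare})-\bar F(X_s^{\vare}),\,Du(s,X_s^{\vare})\bigr\rangle\,ds.
\]

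Next I would invoke the Poisson equation technology developed in Section 3. Denote by $\mathcal L_2^x$ the generator of the frozen equation \eqref{FZE}. For each $x\in H$, the centered function $y\mapsto F(x,y)-\bar F(x)$ has zero mean against $\mu^x$, so there exists a solution $\Phi(x,y)$ to $-\mathcal L_2^x\Phi(x,\cdot)(y)=F(x,y)-\bar F(x)$ with first and second derivatives in both $x$ and $y$ controlled via Assumption \ref{A2} and the exponential ergodicity granted by $\lambda_1-L_G>0$. Substituting $F-\bar F=-\mathcal L_2^x\Phi$ and applying Itô's formula to $\varepsilon\langle\Phi(X_s^{\vare},Y_s^{\vare}),Du(s,X_s^{\vare})\rangle$, using $dY^{\vare}$ contains the $1/\varepsilon$ drift, the $\varepsilon^{-1}\mathcal L_2^{X_s^{\vare}}\Phi$ term is exactly what we need and cancels; what remains is
\[
\mathbb{E}\phi(X_T^{\vare})-\mathbb{E}\phi(\bar X_T)=\varepsilon\cdot\mathbb{E}\bigl\langle\Phi(X_T^{\vare},Y_T^{\vare}),Du(0,X_T^{\vare})\bigr\rangle-\varepsilon\cdot\mathbb{E}\langle\Phi(x,y),Du(T,x)\rangle+\varepsilon\cdot R^{\vare}_T,
\]
where $R^{\vare}_T$ collects integrals involving $D_x\Phi$, $D_{xx}\Phi$, $\partial_t Du$, $D^2u$, the Burgers drift $B(X_s^{\vare})$, the external force $F$, and the trace terms from $Q_1,Q_2$.

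The last step is to show $R^{\vare}_T$, and the two boundary terms, are bounded uniformly in $\varepsilon$ and in $T\in[0,T]$ by a constant depending only on $|x|,\|x\|_\theta,|y|,T$. For the boundary terms this is immediate from $|\Phi(x,y)|\le C(1+|y|)$ and the boundedness of $Du$. For $R^{\vare}_T$, the easy pieces (those with $F$ or the trace terms) use the sup--bounds on $D\Phi,D^2\Phi,D^2u,D^3u$ together with $\mathbb{E}|Y_s^{\vare}|^p\le C$ and $\mathbb{E}|X_s^{\vare}|^p\le C_{p,T}$ proved in Sections 3--4. \emph{The main obstacle, and the reason Assumption \ref{A3} is needed, is the Burgers contribution} $\mathbb{E}\langle B(X_s^{\vare}), D_x\Phi\cdot Du\rangle$ and the related second--order term $\mathbb{E}[D^2u(s,X_s^{\vare})\cdot(B(X_s^{\vare}),\Phi)]$: $B(x)$ is only in $H^{-1}$, so one cannot pair it naively against $H$--valued objects. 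Following the Galerkin--regularization argument of Sections 3 and 4, I would rewrite $\langle B(x),v\rangle =-b(x,v,x)=-\tfrac12\langle x^2,\partial_\xi v\rangle$ and absorb $\partial_\xi$ on the $H^{\alpha}$--regular factor ($Du$ or $D_x\Phi$ applied at a smooth argument), then use the exponential moment bounds on $\|X_s^{\vare}\|_{\theta}$ established for the slow component, plus interpolation between $H$ and $H^{\theta}$, to obtain the required uniform bound. Collecting everything yields $|\mathbb{E}\phi(X_T^{\vare})-\mathbb{E}\phi(\bar X_T)|\le C\varepsilon$ uniformly in $T\in[0,T]$, which is \eqref{WC}.
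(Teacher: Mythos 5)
Your overall architecture---backward Kolmogorov equation for the averaged semigroup, substitution of $F-\bar F=-\mathscr{L}_2\Phi$ via the Poisson equation, and an It\^o correction producing an explicit factor $\varepsilon$---is exactly the paper's (the paper's scalar Poisson solution $\Phi^t_m(s,x,y)$ is precisely your $\langle\Phi(x,y),Du(s,x)\rangle$). The genuine gap is in the step where you declare $u\in C^{1,3}_b$ with bounds depending only on $\phi$ and the moments of $\bar X$, and then treat the $\partial_t Du$ contribution as a remainder controlled by sup--norms. This is not true: $\partial_t D_xu(t,x)\cdot h$ involves $\langle A\bar X_t+B(\bar X_t)+\bar F(\bar X_t),\,\cdot\,\rangle$ and blows up as $t\downarrow 0$; Lemma \ref{Lemma3.6} gives only the singular bound $|\partial_t(D_xu_m(t,x))\cdot h|\leq C_T t^{-1+\theta/2}\|h\|_{\theta}(1+\|x\|_{\theta})e^{\delta|x|^2}$, which is useful only when tested against directions $h\in H^{\theta}$. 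The entire mechanism for reaching order $\varepsilon$ (rather than $\varepsilon^{1-r}$ as in \cite{DSXZ}) is that this singular derivative is paired, inside the Poisson representation, with $\EE F(x,Y^{x,y}_r)-\bar F(x)$, which by assumption \ref{A3} and the $H^{\alpha}$-valued ergodicity estimate \eref{ergodicity2} lies in $H^{\alpha}$ and decays in $r$; this produces $|\partial_s\Phi^t_m(s,x,y)|\leq C(t-s)^{-1+\alpha/2}(\cdots)$ as in \eref{E120}, whose time integral is finite. Your proposal never makes this pairing, so as written the corresponding piece of $R^{\vare}_T$ has no finite bound.

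Relatedly, you attribute the role of assumption \ref{A3} to the Burgers term $\langle B(X^{\vare}_s),\cdot\rangle$; in the paper that term is disposed of by $|B(x)|\leq C\|x\|_{1-\gamma}\|x\|_{1/2+\gamma}$ and moment estimates, with no use of \ref{A3}. Where \ref{A3} actually enters is (i) the $H^{\alpha}$-ergodicity just described, and (ii) the estimate $\left[\mathbb{E}\|X^{m,\vare}_t\|^p_2\right]^{1/p}\leq C_T(t^{-1+\theta/2}+t^{(\alpha-\beta)/2})(\cdots)$ of \eref{Xvare2}, which is what makes the term $D_x\Phi\cdot AX^{m,\vare}_s$ appearing in $\mathscr{L}^m_1\Phi$---arguably the most singular remainder, and one your plan does not address---integrable in $s$. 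Finally, the It\^o computations should be carried out at the Galerkin level and the limit $m\to\infty$ taken via Lemma \ref{GA1}; you acknowledge this only in passing.
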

\begin{remark}
Compare with the model considered in \cite{DSXZ}, we here allow the additive noise in the slow component when studying the weak convergence. Meanwhile, using the techniques of Kolmogorov equation and Poisson equation, we obtain the optimal weak convergence order 1, which improves the order $1-r$ for any $r\in(0,1)$ obtained in  \cite{DSXZ}.
\end{remark}

\section{The a priori estimates}
Note that the operator $A$ is  an unbounded operator, we use Galerkin approximation to reduce the infinite dimensional problem to a finite dimension, i.e., considering the following approximation:
\begin{equation}\left\{\begin{array}{l}\label{Ga mainE}
\displaystyle
dX^{m,\vare}_t=[AX^{m,\vare}_t+B^m(X^{m,\vare}_t)+F^m(X^{m,\vare}_t, Y^{m,\vare}_t)]dt+\sqrt{Q_1}d\bar W^{1,m}_t,\  X^{m,\vare}_0=x^{m}\in H_m,\\
dY^{m,\vare}_t=\frac{1}{\vare}[AY^{m,\vare}_t+G^m(X^{m,\vare}_t, Y^{m,\vare}_t)]dt+\frac{1}{\sqrt{\varepsilon}}\sqrt{Q_2}d\bar W^{2,m}_t,\quad Y^{m,\vare}_0=y^m\in H_m,\end{array}\right.
\end{equation}
where $m\in \mathbb{N}_{+}$, $H_{m}:=\text{span}\{e_{k};1\leq k \leq m\}$, $\pi_{m}$ is the orthogonal projection of $H$ onto $H_{m}$, $x^{m}:=\pi_m x, y^m:=\pi_m y$ and
\begin{eqnarray*}
&& B^m(x):=\pi_m B(x^{m}),\quad F^{m}(x,y):=\pi_m F(x^{m},y^{m}),\quad G^{m}(x,y):=\pi_m G(x^{m}, y^{m}),\\
&&\bar W^{1,m}_t:=\sum^m_{k=1}W^{1,k}_{t}e_k,\quad \bar W^{2,m}_t:=\sum^m_{k=1}W^{2,k}_{t}e_k.
\end{eqnarray*}
Similarly, we consider the following approximation to the averaged equation \eref{1.3}:
\begin{equation}\left\{\begin{array}{l}
\displaystyle d\bar{X}^m_{t}=\left[A\bar{X}^m_{t}+B^m(\bar{X}^m_{t})+\bar{F}^m(\bar{X}^m_{t})\right]dt+\sqrt{Q_1}d \bar{W}^{1,m}_{t},\\
\bar{X}^m_{0}=x^m,\end{array}\right. \label{Ga 1.3}
\end{equation}
where $\bar{F}^m(x):=\int_{H_m}F^m(x,y)\mu^{x,m}(dy)$, and $\mu^{x,m}$ is the unique invariant measure of the transition semigroup of the following frozen equation:
\begin{equation}\left\{\begin{array}{l}
\displaystyle dY^{x,y,m}_t=[AY^{x,y,m}_t+G^m(x,Y^{x,y,m}_t)]dt+\sqrt{Q_2}d\bar{W}^{2,m}_t,\\
Y^{x,y,m}_0=y\in H_m.\end{array}\right. \label{Ga FZE}
\end{equation}

Next, we give some a priori estimates of the solution $\bar{X}^m_t$ of the averaged equation \eref{Ga 1.3} and the solution $(X_{t}^{m,\varepsilon}, Y_{t}^{m,\varepsilon})$ of system \eref{Ga mainE} in subsections 3.1 and 3.2 respectively.

\subsection{The a priori estimates of the solution $\bar{X}^{m}_t$}
Recall the finite dimensional averaged equation \eref{Ga 1.3}.
Note that
$$\bar{F}^m(x):=\int_{H_m}F^m(x,z)\mu^{x,m}(dz)=\lim_{t\rightarrow \infty}\EE F^m(x, Y^{x,y,m}_t),\quad \forall y\in H_m.$$
By assumptions \ref{A1} and \ref{A2} , through a straightforward computation, it is easy to check that
\begin{eqnarray}
&&|D\bar{F}^m(x)\cdot h|\leq C |h| \quad \forall x, h\in H_m ,\label{D1Fm}\\
&&|D^2\bar{F}^m(x)\cdot(h,k)|\leq C |h|\|k\|_{\tau},\quad \forall x, h, k\in H_m,\label{D2Fm}\\
&&|D^3\bar{F}^m(x)\cdot(h,k,l)|\leq C|h|\|k\|_{\tau}\|l\|_{\tau}, \quad \forall x,h,k,l\in H_m,\label{D3Fm}
\end{eqnarray}
where $\tau$ is the constant in assumption \ref{A2}.

As a consequence, \eref{Ga 1.3} admits a unique mild solution $\bar{X}^m_{t}$, i.e.,
\begin{eqnarray*}
\bar X^m_t=e^{tA}x^m+\int^t_0e^{(t-s)A} B^m(\bar X^m_s)ds+\int^t_0e^{(t-s)A} \bar F^m(\bar X^m_s)ds+\int^t_0 e^{(t-s)A}\sqrt{Q_1}\bar{W}^{1,m}_s.
\end{eqnarray*}
The following is the a prior estimate of the solution $\bar{X}^m_{t}$:
\begin{lemma} \label{BarXgamma} (1) For any $p\geq 1$ and $T>0$, there exists a constant $C_{p,T}>0$ such that for any $x\in H$, we have
\begin{align}
\sup_{m\in\mathbb{N}_{+} }\EE\left(\sup_{t\in [0,T]}|\bar{X}^m_t|^p\right)\leq C_{p,T}(1+|x|^p).\label{A.1.0}
\end{align}
(2) For any $T>0$, there exists a constant $C_T>0$ such that for any $x\in H$  and $\delta<\frac{\lambda_1-2L_{F}}{4\|Q_1\|}$, we have
\begin{align}
\sup_{m\in\mathbb{N}_{+} }\EE\left(e^{\delta\sup_{t\leq T}|\bar{X}^m_t|^2+\delta\int^T_0\|\bar{X}^m_s\|^2_1ds}\right)\leq C_T(1+|x|)e^{\delta |x|^2}.\label{A.1.1}
\end{align}

(3) For any $T>0$, $p\geq 1$, $\gamma\in(1,3/2)$ and $x\in H^{\theta}$ with $\theta\in [0, \gamma]$,
there exist constants $d>1$ and $C_{T}>0$ such that for any $t\in (0, T]$, we have
\begin{align}
\sup_{m\in\mathbb{N}_{+} }\left[\EE\|\bar{X}^m_{t}\|^p_{\gamma}\right]^{1/p}\leq C_{T} t^{-\frac{(\gamma-\theta) }{2}}(1+|x|^d+\|x\|_{\theta}). \label{A.1.2}
\end{align}
\end{lemma}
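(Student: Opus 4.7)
My plan is to prove the three assertions in order, using It\^o's formula throughout and, for part (3), the mild formulation combined with semigroup smoothing. For part (1), I would apply It\^o's formula to $|\bar{X}^m_t|^{2p}$. The decisive observation is the cancellation $\langle B(x),x\rangle=\tfrac{1}{3}\int_0^1\partial_\xi(x^3)\,d\xi=0$ for $x\in H^1_0(0,1)$, so the Burgers nonlinearity disappears from the energy identity. Combined with $-\langle Ax,x\rangle=\|x\|_1^2\geq\la_1|x|^2$ and the sublinear bound $|\bar{F}^m(x)|\leq L_F|x|+C$ (inherited from assumption \ref{A1} by averaging $F$ against $\mu^{x,m}$), this yields a dissipative differential inequality for $|\bar{X}^m_t|^{2p}$; Gronwall combined with the BDG inequality for the stochastic integral then produces \eref{A.1.0}.

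For part (2), I would apply It\^o's formula to $\exp\{\delta|\bar{X}^m_t|^2+\delta\int_0^t\|\bar{X}^m_s\|_1^2\,ds\}$. The same cancellation eliminates the Burgers contribution, and the only dangerous term is the It\^o correction $2\delta^2\|Q_1^{1/2}\bar{X}^m_t\|^2 e^{\delta|\bar{X}^m_t|^2}\leq 2\delta^2\|Q_1\||\bar{X}^m_t|^2 e^{\delta|\bar{X}^m_t|^2}$. The hypothesis $\delta<(\la_1-2L_F)/(4\|Q_1\|)$ is exactly what is needed for the dissipation $2\delta\|\bar{X}^m\|_1^2\geq 2\delta\la_1|\bar{X}^m|^2$ to dominate this correction, so after absorbing the $\bar{F}$ term using $\la_1-2L_F>0$, the resulting process is a supermartingale modulo an additive constant; Doob's inequality applied to the associated exponential local martingale then yields \eref{A.1.1}.

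For part (3), I would split the mild formulation into four pieces and estimate each in $H^\gamma$. The initial datum is controlled by the smoothing inequality \eref{P3}, producing the $t^{-(\gamma-\theta)/2}\|x\|_\theta$ factor. The $\bar{F}^m$ integral is controlled by $|\bar{F}^m(x)|\leq L_F|x|+C$ together with part (1), the kernel $(t-s)^{-\gamma/2}$ being integrable since $\gamma<2$. The stochastic convolution is handled by the It\^o isometry and assumption \ref{A4} ($\sum_k\la_k\al_{1,k}<\infty$), which is more than enough to give finite $\EE\|\cdot\|_\gamma^p$ for any $\gamma<3/2$.

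The main obstacle is the Burgers integral $\int_0^t e^{(t-s)A}B^m(\bar{X}^m_s)\,ds$: the naive bound $\|B(x)\|_{-1}\leq C|x|^{3/2}\|x\|_1^{1/2}$ yields a non-integrable singularity $(t-s)^{-(\gamma+1)/2}$ when $\gamma>1$. Instead I would use the sharper estimate $\|B(x)\|_{-s^*}\leq C\|x\|_{3/4-s^*/2}^2$, obtained by writing $B(x)=\partial_\xi(x^2)/2$, dualizing the $\partial_\xi$ against test functions in $H^{s^*}$, and invoking the 1D $H^\sigma$-algebra property for $\sigma>1/2$ (valid for $s^*<1/2$). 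Choosing $s^*<2-\gamma$ restores the integrability of the semigroup singularity $(t-s)^{-(\gamma+s^*)/2}$, and the interpolation $\|x\|_{3/4-s^*/2}^2\leq|x|^{1/2+s^*}\|x\|_1^{3/2-s^*}$ reduces the task to bounding $\EE\bigl[\int_0^t (t-s)^{-a}\|\bar{X}^m_s\|_1^r\,ds\bigr]^p$ with $a<1$ and $r<3/2$. This follows from H\"older in time together with the $p$-th moment bound $\EE\bigl[\int_0^T\|\bar{X}^m_s\|_1^2\,ds\bigr]^p\leq C_{p,T}(1+|x|^{2p})$ (a byproduct of the It\^o argument in part (1)) and the sup-moment bound \eref{A.1.0}. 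The polynomial exponent $d>1$ in the stated bound arises from this interpolation/H\"older balance.
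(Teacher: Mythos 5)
Your parts (1) and (2) are sound and essentially coincide with the paper's argument: the paper defers (1) to a minor revision of \cite{DSXZ}, and for (2) it runs exactly your exponential It\^o computation for $Z^m_t=|\bar X^m_t|^2+\int_0^t\|\bar X^m_s\|_1^2ds$, with the factor $4$ (rather than $2$) in the restriction on $\delta$ consumed by the maximal inequality, as you indicate. The gap is in part (3), in the convolution with the Burgers nonlinearity. A first, secondary issue: the estimate $\|B(x)\|_{-s^*}\leq C\|x\|_{3/4-s^*/2}^2$ cannot hold for $s^*<1/2$, since there $3/4-s^*/2<1-s^*$ and you would be asserting that $x^2$ is smoother than $x$; the bilinear estimate requires the target regularity $1-s^*$ not to exceed that of the factors, i.e.\ $s^*\geq 1/2$, so you must take $s^*\in(1/2,2-\gamma)$ (nonempty for $\gamma<3/2$) and not invoke the algebra property. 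The fatal problem is the closing step: with $a=(\gamma+s^*)/2$ and $r=3/2-s^*$, H\"older in time against $\int_0^T\|\bar X^m_s\|_1^2\,ds$ forces the conjugate exponent $q=2/(2-r)=4/(1+2s^*)$ onto the kernel, and the integrability condition $aq<1$ reads $2(\gamma+s^*)<1+2s^*$, i.e.\ $\gamma<1/2$, which is incompatible with $\gamma>1$. The integrated energy bound simply does not carry enough time-integrability to absorb a kernel singularity of order $(\gamma+s^*)/2>3/4$ paired with $\|\cdot\|_1^{3/2-s^*}$, so your reduction does not close.

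The paper avoids this by interpolating against the target norm rather than the energy norm: using Lemma \ref{Property B1} it bounds $\|B^m(\bar X^m_s)\|_{-\alpha_3}\leq C\|\bar X^m_s\|_{\alpha_1}\|\bar X^m_s\|_{\alpha_2+1}$ with $1+\alpha_1+\alpha_2<\gamma$ and $\alpha_3\in[1/2,2-\gamma)$, interpolates both factors between $|\cdot|$ and $\|\cdot\|_\gamma$ so that the total exponent on $\|\bar X^m_s\|_\gamma$ is $(1+\alpha_1+\alpha_2)/\gamma<1$, applies Young's inequality to obtain $C(1+|\bar X^m_s|^{d})+\|\bar X^m_s\|_\gamma$ (this is where $d>1$ enters, the moments of $|\bar X^m_s|^d$ being controlled by \eref{A.1.0}), and then closes the resulting linear integral inequality for $[\EE\|\bar X^m_t\|_\gamma^p]^{1/p}$ with the singular Gronwall lemma (Lemma \ref{Gronwall 2}). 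To salvage your route you would have to reorganize it into such a self-referential inequality in $[\EE\|\bar X^m_t\|_\gamma^p]^{1/p}$ rather than reducing to the $H^1$ energy bound.
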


\begin{proof}
(1) Note that $\text{Tr}Q_1<\infty$, by a minor revision as in the proof of \cite[Lemma 3.1]{DSXZ}, we can easily obtain \eref{A.1.0}.

(2) The proof of \eref{A.1.1} is inspired from \cite[Proposition 5.11]{Da}. However, for the reason of technique which will be used later. Here we need a stronger result. Set
$$
Z^m_t=|\bar{X}^m_t|^2+\int^t_0 \|\bar{X}^m_s\|^2_1ds,\quad t\in [0,T].
$$
Then we have
$$
dZ^m_t=\left[-\|\bar{X}^m_t\|^2_1+\text{Tr}Q^m_1+2\langle \bar{X}^m_t, \bar{F}(\bar{X}^m_t)\rangle\right]dt+2\langle \bar{X}^m_t, \sqrt{Q_1}dW^1_t\rangle,
$$
where $Q^m_1=\pi_m Q^1$.

By It\^o's formula, it follows that for any $\delta>0$,
\begin{eqnarray}
e^{\delta Z^m_t}= \!\!\!\!\!\!\!\!&&e^{\delta |x|^2}+\int^t_0\delta e^{\delta Z^m_s} \left[-\|\bar{X}^m_s\|^2_1+2\delta|\sqrt{Q_1}\bar{X}^m_s|^2+\text{Tr}Q^m_1+2\langle \bar{X}^m_s, \bar{F}^m(\bar{X}^m_s)\rangle\right]ds\nonumber\\
&&+\int^t_0 2\delta e^{\delta Z^m_s}\langle \bar{X}^m_s, \sqrt{Q_1}d\bar{W}^{1,m}_s\rangle.\label{F6.5}
\end{eqnarray}
Note that by \eref{F2.21}, it follows
$$
|\bar{F}^m(x)|\leq\int_{H}\left|F^m(x,y)\right|\mu^{x,m}(dy)\leq \int_{H}(L_F|x|+C)\mu^{x,m}(dy)\leq L_F|x|+C.
$$
Then for any $\delta\in (0, \frac{\lambda_1-2 L_{F}}{2\|Q_1\|})$, where $\|Q_1\|$ is the usual operator norm of $Q_1$, we get
\begin{eqnarray*}
&&-\|\bar{X}^m_s\|^2_1+2\delta|\sqrt{Q_1}\bar{X}^m_s|^2+2\langle \bar{X}^m_s, \bar{F}^m(\bar{X}^m_s)\rangle\\
\leq\!\!\!\!\!\!\!\!&& -(\lambda_1-2L_{F})|\bar{X}^m_s|^2+2\delta\|Q_1\||\bar{X}^m_s|^2+C|\bar{X}^m_s|\leq C,
\end{eqnarray*}
which combines with \eref{F6.5}, we obtain
\begin{eqnarray*}
\EE e^{\delta Z^m_t}\leq e^{\delta |x|^2}+\delta (\text{Tr}Q_1+C)\int^t_0\EE e^{\delta Z^m_s}ds.
\end{eqnarray*}
By Gronwall's lemma, we obtain that
\begin{eqnarray}
\EE e^{\delta Z^m_t}\leq e^{\delta |x|^2+t\delta \left( \text{Tr}Q_1+C\right)}.\label{F6.6}
\end{eqnarray}

Using \eref{F6.5}, \eref{F6.6} and Burkholder-Davis-Gundy's inequality, for any $\delta\in (0, \frac{\lambda_1-2 L_{F}}{4\|Q_1\|}) $,
\begin{eqnarray*}
\EE\left(\sup_{t\leq T}e^{\delta Z^m_t}\right)\leq\!\!\!\!\!\!\!\!&&e^{\delta |x|^2}+\delta (\text{Tr}Q_1+C)\int^T_0\EE\left( e^{\delta Z^m_s} \right)ds+C\delta\text{Tr}Q_1\EE\left[\int^T_0e^{2\delta Z^m_s}|\bar{X}^m_s|^2ds\right]^{1/2} \\
\leq\!\!\!\!\!\!\!\!&&e^{\delta |x|^2}+\delta T(\text{Tr}Q_1+C)e^{\delta |x|^2+T\delta \left( \text{Tr}Q_1+C\right)}+C_T\delta\text{Tr}Q_1(1+|x|)e^{\delta |x|^2+T\delta \left( \text{Tr}Q_1+C\right)}\\
\leq\!\!\!\!\!\!\!\!&&C_T(1+|x|)e^{\delta |x|^2},
\end{eqnarray*}
which implies that \eref{A.1.1} holds.

(3) Recall that
\begin{align*}
\bar{X}^m_t=e^{tA}x^m+\int^t_0e^{(t-s)A}B^m(\bar{X}^m_s)ds+\int^t_0e^{(t-s)A}\bar{F}^m(\bar{X}^m_s)ds+\int^t_0 e^{(t-s)A}\sqrt{Q_1}d\bar{W}^{1,m}_s.
\end{align*}


According to $\eref{P3}$ and Lemma \ref{Property B1}, we have  for any $\gamma\in(1,3/2)$,
\begin{eqnarray*}
\Big\|\int^t_0e^{(t-s)A}B^m(\bar{X}^m_s)ds\Big\|_{\gamma}\leq \!\!\!\!\!\!\!\!&& \int^t_0\big\|e^{(t-s)A}B^m(\bar{X}^m_s)\big\|_{\gamma}ds \nonumber\\
\leq \!\!\!\!\!\!\!\!&& C\int^t_0 (t-s)^{\frac{-\alpha_{3}-\gamma}{2}}\|B^m(\bar{X}^m_s)\|_{-\alpha_{3}}ds  \nonumber\\
\leq \!\!\!\!\!\!\!\!&&
C\int^t_0(t-s)^{\frac{-\alpha_{3}-\gamma}{2}}\|\bar{X}^m_s\|_{\alpha_{1}}\|\bar{X}^m_s\|_{\alpha_{2}+1}ds,
\end{eqnarray*}
where we choose positives constants $\alpha_{1}$ and $\alpha_{2}$  be small enough such that
$1+\alpha_{1}+\alpha_{2} \in (1,\gamma)$, then find a proper $\alpha_{3}\in [1/2, 2-\gamma)$ satisfying $ \alpha_{1}+\alpha_{2}+\alpha_{3}>\frac{1}{2} $.
(For instance, choosing by $\alpha_{3}=\frac{1}{2}$ and $\alpha_{1}=\alpha_{2}=\frac{\gamma-1}{4}$)

Using interpolation inequality, for any $0<\alpha_{1}<\gamma$ we have
\begin{align} \label{IEB1}
\|\bar{X}^m_s\|_{\alpha_{1}}
\leq C | \bar{X}^m_s |^{\frac{\gamma-\alpha_{1}}{\gamma}}
\|\bar{X}^m_s\|_{\gamma}^{\frac{\alpha_{1}}{\gamma}},
\end{align}
and for any $0<\alpha_{2}+1<\gamma$ we have
\begin{align} \label{IEB2}
\|\bar{X}^m_s\|_{\alpha_{2}+1}
\leq C |\bar{X}^m_s|^{\frac{\gamma-\alpha_{2}-1}{\gamma}}
\|\bar{X}^m_s\|_{\gamma}^{\frac{\alpha_{2}+1}{\gamma}}.
\end{align}
It follows from $\eref{IEB1}$ and $\eref{IEB2}$ , by Minkowski's inequality we obtain for any $p\geq 1$,
\begin{eqnarray}
\!\!\!\!\!\!\!\!&&\left[\mathbb{E}\Big\|\int^t_0e^{(t-s)A}B(\bar{X}^m_s)ds\Big\|_{\gamma}^{p}\right]^{1/p} \nonumber\\
\leq\!\!\!\!\!\!\!\!&& C
\int^t_0 (t-s)^{\frac{-\alpha_{3}-\gamma}{2}}
\left[\EE| \bar{X}^m_s |^{\frac{p(2\gamma-\alpha_{1}-\alpha_{2}-1)}{\gamma}}
\|\bar{X}^m_s\|_{\gamma}^{\frac{p(\alpha_{1}+\alpha_{2}+1)}{\gamma}}\right]^{1/p}ds \nonumber\\
\leq\!\!\!\!\!\!\!\!&& C
\int^t_0 (t-s)^{\frac{-\alpha_{3}-\gamma}{2}}
\Big(\left[\mathbb{E} |\bar{X}^m_s |^{\frac{p(2\gamma-\alpha_{1}-\alpha_{2}-1)}{\gamma-\alpha_{1}-\alpha_{2}-1}}\right]^{1/p}
+\left[\mathbb{E}\|\bar{X}^m_s\|_{\gamma}^{p}\right]^{1/p}\Big)ds.
\nonumber
\end{eqnarray}
Then by \eref{A.1.0}, there exists $d>1$ such that
\begin{align} \label{HolderNorm B}
\left[\mathbb{E}\Big\|\int^t_0e^{(t-s)A}B(\bar{X}^m_s)ds\Big\|_{\gamma}^{p}\right]^{1/p}
\leq \int^t_0(t-s)^{-\frac{\alpha_{3}+\gamma}{2}}\left[\mathbb{E}\|\bar{X}^m_s\|_{\gamma}^{p}\right]^{1/p}ds+C_T(1+|x|^d).
\end{align}

According to $\eref{P3}$, we obtain for any $\gamma\in(1,3/2)$,
\begin{eqnarray}
\left[\mathbb{E}\Big\|\int^t_0e^{(t-s)A}\bar{F}(\bar{X}^m_s)ds\Big\|_{\gamma}^{p}\right]^{1/p} \leq\!\!\!\!\!\!\!\!&&
C\Big[
\int^t_0 (t-s)^{-\frac{\gamma}{2}}\left[\EE|\bar{F}(\bar{X}^m_s)|^p\right]^{1/p}ds
\nonumber\\
\leq\!\!\!\!\!\!\!\!&&
C\Big[
\int^t_0 (t-s)^{-\frac{\gamma}{2}}\left[\EE(1+|\bar{X}^m_s|^p)\right]^{1/p}ds
\nonumber\\
\leq\!\!\!\!\!\!\!\!&&C_{T}(1+ |x|). \label{BoundF}
\end{eqnarray}

Note that $\int^t_0 (-A)^{\frac{\gamma}{2}} e^{(t-s)A}\sqrt{Q_1}d\bar{W}^{1,m}_s\sim N(0, \tilde{Q}^m_t)$,  which is a Gaussian random variable with mean zero and covariance operator given by
$$
\tilde{Q}^m_t x=\int^t_0 e^{rA}(-A)^{\gamma}Q^m_1 e^{rA}xdr,\quad x\in H_m.
$$
Then for any $p\geq1$, $s>0$, we follow the proof of \cite[Corollary 2.17]{Daz1} to obtain
\begin{align}
\EE\left\|\int^t_0 e^{(t-s)A}\sqrt{Q_1}d\bar{W}^{1}_s\right\|^{p/2}_{\gamma}\leq &\   C[\text{Tr}\tilde{Q}^m_s]^p
= C\left(\Sigma^m_{k=1}\int^t_0e^{-2r\lambda_k}\lambda^{\gamma}_k\alpha_{1,k} dr\right)^{p/2} \nonumber\\
\leq & C\left(\Sigma_{k}\lambda^{\gamma-1}_k\alpha_{1,k}
\int^{2t\lambda_k}_0 e^{-r}dr\right)^{p/2}\nonumber\\
\leq& C(\Sigma_{k}\lambda^{\gamma-1}_k\alpha_{1,k})^{p/2} <\infty,\label{F4.4}
\end{align}
where the last inequality comes from the condition \eref{Noise}.

Finally, combining \eref{HolderNorm B}-\eref{F4.4}, by using Lemma \ref{Gronwall 2}, it is easy to see \eqref{A.1.2}  holds. The proof is complete.
\end{proof}

\begin{remark}
Note that from Lemma \ref{BarXgamma}, using interpolation inequality,
we get that for any  $T>0$, $p\geq 1$, $\gamma_1\in (0, 1]$, $\gamma\in (1,3/2)$ and $x\in H^{\theta}$ with $\theta\in[0,1]$,
there exist constants $d>1$ and $C_{T}>0$ such that for any $t\in (0, T]$,
\begin{align} \label{BarX1}
\sup_{m\in\mathbb{N}_{+}}\left[\EE\|\bar{X}^m_t\|^p_{\gamma_1}\right]^{1/p}\leq
C\left[\EE\left(|\bar{X}^m_t|^{\frac{(\gamma-\gamma_1)p}{\gamma}}
\|\bar{X}^m_t\|^{\frac{\gamma_1 p}{\gamma}}_{\gamma}\right)\right]^{1/p}
\leq C_{T}t^{-\frac{(\gamma-\theta) \gamma_1 }{2\gamma}}(1+\|x\|^{\frac{\gamma_1 }{\gamma}}_{\theta}+|x|^d).
\end{align}
\end{remark}

\vspace{0.2cm}
In order to estimate $\EE\|\bar{X}^m_t\|^p_2$, we need the following lemma.
\begin{lemma}\label{Lemma 3.3}
For any initial value $x\in H^{\theta}$ with $\theta\in[0,1]$, $\gamma\in (1,3/2)$, $0<s<t\leq T$, $p\geq1$,
there exist constants $d>1$ and $C_{T}>0$ such that
\begin{align}
\sup_{m\in\mathbb{N}_{+}}\left[\EE\|\bar{X}^m_t-\bar{X}^m_s\|^p_{1}\right]^{1/p}
\leq C_T(t-s)^{\frac{\gamma-1}{2}}s^{-\frac{\gamma-\theta}{2}}(1+|x|^d)(1+\|x\|_{\theta}). \label{COXT}
\end{align}
\end{lemma}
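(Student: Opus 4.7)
The plan is to start from the mild solution written with base point $s$ rather than $0$:
\begin{equation*}
\bar{X}^m_t-\bar{X}^m_s = (e^{(t-s)A}-I)\bar{X}^m_s + \int_s^t e^{(t-r)A}B^m(\bar{X}^m_r)\,dr + \int_s^t e^{(t-r)A}\bar{F}^m(\bar{X}^m_r)\,dr + \int_s^t e^{(t-r)A}\sqrt{Q_1}\,d\bar{W}^{1,m}_r,
\end{equation*}
and estimate the $H^1$-norm of the four pieces in $L^p(\Omega)$ separately, with target exponent $(t-s)^{(\gamma-1)/2}$ in each case.

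For the linear piece I would apply \eqref{P4} in the form $|(e^{(t-s)A}-I)(-A)^{1/2}\bar{X}^m_s|\leq C(t-s)^{(\gamma-1)/2}\|\bar{X}^m_s\|_{\gamma}$, and then take the $L^p$ norm and invoke Lemma~\ref{BarXgamma}(3) to get exactly the singular factor $s^{-(\gamma-\theta)/2}(1+|x|^d+\|x\|_{\theta})$ claimed in the statement. The $\bar{F}^m$ piece is easy: by \eqref{P3} with $\sigma_1=0,\sigma_2=1$ together with the linear growth of $\bar{F}^m$ inherited from \eqref{F2.21}, one has a bound $C\int_s^t(t-r)^{-1/2}(1+|\bar{X}^m_r|)\,dr\leq C_T(t-s)^{1/2}(1+\sup_{r\leq T}|\bar{X}^m_r|)$, and since $\gamma<2$ we can absorb $(t-s)^{1/2}\leq T^{1-\gamma/2}(t-s)^{(\gamma-1)/2}$, then use Lemma~\ref{BarXgamma}(1) for the moment. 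The stochastic convolution piece is handled by Itô's isometry and Gaussian hypercontractivity: a direct computation gives $\EE\bigl\|\int_s^t e^{(t-r)A}\sqrt{Q_1}\,d\bar{W}^{1,m}_r\bigr\|_1^2=\tfrac12\sum_k\alpha_{1,k}(1-e^{-2(t-s)\lambda_k})\leq(t-s)\sum_k\lambda_k\alpha_{1,k}$, which is $\leq C(t-s)$ by assumption~\ref{A4}, hence again absorbed into $(t-s)^{(\gamma-1)/2}$.

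The Burgers piece is the main obstacle and requires a careful parameter choice. Following the same strategy as in Lemma~\ref{BarXgamma}(3), one writes $\|e^{(t-r)A}B^m(\bar{X}^m_r)\|_1\leq C(t-r)^{-(1+\alpha_3)/2}\|\bar{X}^m_r\|_{\alpha_1}\|\bar{X}^m_r\|_{\alpha_2+1}$ via \eqref{P3} and the bilinear estimate from the Appendix. The key new choice is $\alpha_3=2-\gamma\in(1/2,1)$, which is admissible because $\gamma\in(1,3/2)$: on the one hand $\alpha_3<1$ makes $\int_s^t(t-r)^{-(1+\alpha_3)/2}\,dr\sim(t-s)^{(\gamma-1)/2}$ finite, and on the other hand one can still pick $\alpha_1,\alpha_2>0$ small with $1+\alpha_1+\alpha_2<\gamma$ and $\alpha_1+\alpha_2+\alpha_3>1/2$ so that the bilinear estimate applies. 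Then Cauchy--Schwarz on $\Omega$ together with the interpolation inequality \eqref{BarX1} bounds the $L^p$-moment of $\|\bar{X}^m_r\|_{\alpha_1}\|\bar{X}^m_r\|_{\alpha_2+1}$ by a polynomial in $|x|$ and $\|x\|_{\theta}$ times the mild singularity $r^{-(\gamma-\theta)(\alpha_1+\alpha_2+1)/(2\gamma)}$, which is integrable near $r=s$ and uniformly dominated on $[s,T]$ by $C_Ts^{-(\gamma-\theta)/2}$ (since $\gamma\geq1$ makes this exponent smaller than $(\gamma-\theta)/2$). Summing the four contributions yields exactly the bound \eqref{COXT}.
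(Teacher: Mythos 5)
Your proposal is correct and follows essentially the same route as the paper: the same four-term decomposition based at time $s$, \eqref{P4} plus Lemma \ref{BarXgamma}(3) for the semigroup increment, the smoothing estimate \eqref{P3} with the bilinear bound of Lemma \ref{Property B1} in the negative space $H^{-(2-\gamma)}$ for the Burgers term (the paper takes $\alpha_1=0$, $\alpha_2=\gamma-1$ where you take $\alpha_1,\alpha_2>0$ small, an immaterial difference), and the crude $(t-s)^{1/2}\leq C_T(t-s)^{(\gamma-1)/2}$ absorption for the drift and stochastic-convolution terms. No gaps.
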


\begin{proof}
For any $T>0$ with $0<s<t\leq T$, we write
\begin{align} \label{bar X Contin}
\bar{X}^m_t-\bar{X}^m_s =& \ \big (e^{(t-s)A}-I \big)\bar{X}^m_s
+\int_{s}^{t}e^{(t-r)A}B^m(\bar{X}^m_r)dr\nonumber\\
&\ +\int_{s}^{t}e^{(t-r)A}\bar{F}^m(\bar{X}^m_r)dr+\int^t_se^{(t-r)A}\sqrt{Q_1}d\bar{W}^{1,m}_r .
\end{align}

Using \eref{P4} and Lemma \ref{BarXgamma}, there exist $d>1$, $C_T>0$ such that for any $\gamma\in (1,3/2)$,
\begin{eqnarray}  \label{bar X Contin 1}
\left[\EE\left\|\big(e^{A(t-s)}-I \big)\bar{X}^m_s\right\|^p_{1}\right]^{1/p}
\leq\!\!\!\!\!\!\!\!&&
C(t-s)^{\frac{\gamma-1}{2}}\left[\EE\|\bar{X}^m_s\|^p_{\gamma} \right]^{1/p} \nonumber\\
\leq\!\!\!\!\!\!\!\!&&
C_T(t-s)^{\frac{\gamma-1}{2}}s^{-\frac{\gamma-\theta}{2}}(1+|x|^d+\|x\|_{\theta}).
\end{eqnarray}

Using \eref{P3} and  Lemma \ref{BarXgamma}, there exist $d>1$, $C_T>0$ such that for any $\gamma\in (1,3/2)$, $\theta\in[0,1]$,
\begin{eqnarray}  \label{bar X Contin 2}
\left[\EE\left\|\int_{s}^{t}e^{(t-r)A}B^m(\bar{X}^m_r)dr\right\|^p_{1}\right]^{1/p}\leq\!\!\!\!\!\!\!\!&&
C\int_{s}^{t}(t-r)^{-\frac{3-\gamma}{2}}\left[\EE\left\|B^m(\bar{X}^m_r)\right\|^p_{-(2-\gamma)}\right]^{1/p}dr  \nonumber\\
\leq\!\!\!\!\!\!\!\!&&
C_T\int_{s}^{t}(t-r)^{-\frac{3-\gamma}{2}}\left[\EE|\bar{X}^m_r|^p\|\bar{X}^m_r\|^{p}_{\gamma}\right]^{1/p}dr  \nonumber\\
\leq\!\!\!\!\!\!\!\!&&
C_T\int_{s}^{t}(t-r)^{-\frac{3-\gamma}{2}}r^{-\frac{\gamma-\theta}{2}}(1+|x|^d)(1+\|x\|_{\theta})dr  \nonumber\\
\leq\!\!\!\!\!\!\!\!&&
C_T(t-s)^{\frac{\gamma-1}{2}}s^{-\frac{\gamma-\theta}{2}}(1+|x|^d)(1+\|x\|_{\theta})
\end{eqnarray}
and
\begin{eqnarray} \label{bar X Contin 3}
\left[\EE\left\|\int_{s}^{t}e^{(t-r)A}\bar{F}^m(\bar{X}^m_r)dr\right\|^p_{1}\right]^{\frac{1}{p}}
\leq\!\!\!\!\!\!\!\!&&
C\int_{s}^{t}[1+(t-r)^{-\frac{1}{2}}]\left[\EE\left|\bar{F}^m(\bar{X}^m_r)\right|^p\right]^{\frac{1}{p}}dr
\nonumber\\
\leq\!\!\!\!\!\!\!\!&&
C\int_{s}^{t}[1+(t-r)^{-\frac{1}{2}}]\left[\EE(1+ | \bar{X}^m_r|^p)\right]^{\frac{1}{p}}dr  \nonumber\\
\leq\!\!\!\!\!\!\!\!&&
C_T(t-s)^{\frac{1}{2}}(1 + |x|).
\end{eqnarray}

By condition \eref{Noise}, it is easy to see that
\begin{eqnarray}
\left[\EE\left\|\int^t_s e^{(t-r)A}\sqrt{Q_1}d\bar{W}^{1,m}_t\right\|^{p}_{1}\right]^{\frac{1}{p}}\leq\!\!\!\!\!\!\!\!&&C\left[\Sigma_{k}\int^{t-s}_0e^{-2r\lambda_k}\lambda_k\alpha_{1,k} dr\right]^{\frac{1}{2}} \nonumber\\
=\!\!\!\!\!\!\!\!&&C\left[\Sigma_{k}\alpha_{1,k}(1-e^{-2(t-s)\lambda_k})\right]^{\frac{1}{2}}\nonumber\\
\leq\!\!\!\!\!\!\!\!&&C(\Sigma_{k}\lambda_k\alpha_{1,k})^{\frac{1}{2}} (t-s)^{\frac{1}{2}} .\label{DW}
\end{eqnarray}

Finally, by combining \eqref{bar X Contin}-\eqref{bar X Contin 3}, we can easily obtain that \eref{COXT} holds. The proof is complete.
\end{proof}

\begin{lemma} \label{Xbar2}
For any $T>0$, $p\geq1$ and $x\in H^{\theta}$ with $\theta\in(0,1]$, there exist constants $d>1$ and $C_{T}$ such that for any $0<t\leq T$,
\begin{eqnarray}
\sup_{m\in\mathbb{N}_{+}}\left[\mathbb{E}\|\bar{X}^m_{t}\|^p_2\right]^{\frac{1}{p}}
\leq C_T t^{-1+\frac{\theta}{2}}(1+|x|^d)(1+\|x\|^2_{\theta}).  \label{barXm2}
\end{eqnarray}
\end{lemma}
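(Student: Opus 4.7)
The plan is to apply the mild-formulation decomposition
\[
\bar X^m_t = e^{tA}x^m + \int_0^t e^{(t-s)A}B^m(\bar X^m_s)\,ds + \int_0^t e^{(t-s)A}\bar F^m(\bar X^m_s)\,ds + \int_0^t e^{(t-s)A}\sqrt{Q_1}\,d\bar W^{1,m}_s
\]
and to estimate each of the four pieces in $H^2$. The linear part gives $\|e^{tA}x^m\|_2\le C t^{-1+\theta/2}\|x\|_\theta$ directly from \eref{P3}. The stochastic convolution is Gaussian in $H^2$ with covariance trace bounded by $\tfrac{1}{2}\sum_k\lambda_k\alpha_{1,k}$, finite by \eref{Noise}, so its $L^p(\Omega;H^2)$-moments are uniformly bounded via Gaussian moment equivalence, exactly as in \eref{F4.4}. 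The averaged-drift integral is handled using assumption \ref{A3}, which (after bounding the $H^\beta$-moments of the frozen invariant measure $\mu^x$ by the dissipativity in \eref{Ga FZE}) gives $\|\bar F(x)\|_\alpha\le C(1+\|x\|_\beta)$; then \eref{P3} produces an $(t-s)^{-(2-\alpha)/2}$ singularity integrable because $\alpha>0$, and the remaining $s$-integral is uniformly controlled via \eref{BarX1}.

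The main obstacle is the Burgers integral, and this is what forces the hypothesis $\theta>0$. The naive bound $\|B(v)\|_0\le C\|v\|_1^2$ would produce a non-integrable $(t-s)^{-1}$ singularity. Instead, I would pick $\kappa\in(\tfrac{1}{2}-\theta,\tfrac{1}{2})$, a non-empty interval precisely because $\theta>0$, and use the identity $B(v)=\tfrac{1}{2}\partial_\xi(v^2)$ together with the Banach algebra property of $H^{3/2-\kappa}$ on $[0,1]$ (valid since $3/2-\kappa>1/2$) to obtain
\[
\|B^m(v)\|_{1/2-\kappa}\le C\|v^2\|_{3/2-\kappa}\le C\|v\|_{3/2-\kappa}^2.
\]
Combined with \eref{P3} this yields $\|e^{(t-s)A}B^m(\bar X^m_s)\|_2\le C(t-s)^{-3/4-\kappa/2}\|\bar X^m_s\|_{3/2-\kappa}^2$, so Minkowski's inequality and Lemma \ref{BarXgamma}(3) (applied with $\gamma=3/2-\kappa$) reduce the Burgers contribution to the Beta-type integral
\[
\int_0^t(t-s)^{-3/4-\kappa/2}\,s^{-(3/2-\kappa-\theta)}\,ds\;\le\;C\,t^{-5/4+\kappa/2+\theta},
\]
whose two exponents $\tfrac{1}{4}-\tfrac{\kappa}{2}$ and $-\tfrac{1}{2}+\kappa+\theta$ are both positive by the choice of $\kappa$.

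Finally, since $-5/4+\kappa/2+\theta>-1+\theta/2$ whenever $\kappa>1/2-\theta$, one has $t^{-5/4+\kappa/2+\theta}\le C_T t^{-1+\theta/2}$ on $(0,T]$, which is the target order for the Burgers piece. Collecting the four bounds yields \eref{barXm2}. The 1D algebra structure of $H^s$ for $s>1/2$ is essential: it lets one trade half a spatial derivative between $B$ and the semigroup, and the regularity threshold $\gamma<3/2$ in Lemma \ref{BarXgamma}(3) is precisely why the argument degenerates at $\theta=0$.
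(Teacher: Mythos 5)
Your argument is correct, but it follows a genuinely different route from the paper's. The paper does not push the Burgers nonlinearity into a positive-order space; instead it adds and subtracts the time-frozen values $B^m(\bar X^m_t)$ and $\bar F^m(\bar X^m_t)$ inside the convolution integrals, integrates the semigroup exactly against the frozen term (so that $\big\|\int_0^t e^{(t-s)A}B^m(\bar X^m_t)\,ds\big\|_2=\|(e^{tA}-I)B^m(\bar X^m_t)\|\leq 2|B^m(\bar X^m_t)|$, which only needs the low-order bound \eref{BP1}), and then controls the remainder $\int_0^t e^{(t-s)A}[B^m(\bar X^m_s)-B^m(\bar X^m_t)]\,ds$ by pairing the $(t-s)^{-1}$ singularity with the temporal H\"older estimate of Lemma \ref{Lemma 3.3}; the averaged drift is treated the same way and never requires assumption \ref{A3}. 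You instead trade half a derivative spatially, via $\|B^m(v)\|_{1/2-\kappa}\leq C\|v\|^2_{3/2-\kappa}$ from the one-dimensional algebra property of $H^{3/2-\kappa}$, and close the Beta-function integral using only Lemma \ref{BarXgamma}(3); your exponent bookkeeping ($-5/4+\kappa/2+\theta\geq -1+\theta/2$ for $\kappa>1/2-\theta$) is right, and your treatment of $\bar F^m$ through \ref{A3} mirrors what the paper itself does for $F$ in the proof of \eref{Xvare2}. What your route buys is independence from Lemma \ref{Lemma 3.3} (no time-increment estimate needed); what it costs is a positive-order bilinear estimate not recorded in the appendix (Lemma \ref{Property B1} only covers $\|B\|_{-\alpha_3}$ with $\alpha_3\geq 0$, so you should state and justify the algebra bound explicitly, checking as well that $v^2$ lies in the spectral domain $\mathscr{D}((-A)^{(3/2-\kappa)/2})$, which holds since $v^2$ inherits the Dirichlet boundary values), plus a moment bound for $\mu^{x,m}$ in $H^\beta$ that you correctly flag. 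One small correction: you should take $\kappa\in(\max\{0,\tfrac12-\theta\},\tfrac12)$, since Lemma \ref{BarXgamma}(3) requires $\gamma=3/2-\kappa<3/2$, i.e.\ $\kappa>0$, which your stated interval does not guarantee when $\theta>1/2$.
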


\begin{proof}
For $t>0$, we rewrite
\begin{eqnarray}
\bar{X}^m_t
=\!\!\!\!\!\!\!\!&&e^{tA}x^m+\int_{0}^{t}e^{(t-s)A}B^m(\bar{X}^m_t)ds+\int_{0}^{t}e^{(t-s)A}\left[B^m(\bar{X}^m_s)-B^m(\bar{X}^m_t)\right]ds  \nonumber\\
\!\!\!\!\!\!\!\!&&+\int_{0}^{t}e^{(t-s)A}\bar{F}^m(\bar{X}^m_t)ds
+\int_{0}^{t}e^{(t-s)A}\left[\bar{F}^m(\bar{X}^m_s)-\bar{F}^m(\bar{X}^m_t)\right]ds \nonumber\\
&&+\int_{0}^{t}e^{(t-s)A}\sqrt{Q_1}d\bar{W}^{1,m}_s\nonumber\\
:=\!\!\!\!\!\!\!\!&&I_{1}+I_{2}+I_{3}+I_{4}+I_{5}+I_6. \nonumber
\end{eqnarray}

For the term $I_{1}$, using \eqref{P3} we have
\begin{align} \label{ABarX1}
\|e^{tA}x^m\|_2
\leq C t^{-1+\frac{\theta}{2}}\|x\|_{\theta}.
\end{align}

For the term $I_{2}$, by \eref{BarX1} and \eref{A.1.2}, we have for any $p\geq 1$, $\gamma\in (1,3/2)$,
\begin{eqnarray} \label{ABarX2}
\left[\EE\|I_{2}\|^p_2\right]^{1/p}=\!\!\!\!\!\!\!\!&&\left[\EE\left\|(e^{tA}-I)B^m(\bar{X}^m_t)\right\|^p\right]^{1/p}\nonumber\\
 \leq\!\!\!\!\!\!\!\!&&C\left[\EE\left\|\bar{X}^m_t\right\|_{1/2}^{p}\left\|\bar{X}^m_t\right\|_{\gamma}^{p}\right]^{1/p}  \nonumber\\
\leq\!\!\!\!\!\!\!\!&&C t^{-\frac{1}{4}-\frac{\gamma-\theta}{2}}(1+|x|^d)(1+\|x\|_{\theta})\nonumber\\
\leq\!\!\!\!\!\!\!\!&& C_T t^{-1+\theta/2}(1+|x|^d)(1+\|x\|_{\theta}).
\end{eqnarray}

For the term $I_{3}$, by Lemma \ref{Property B1}, we obtain
\begin{eqnarray*}
\|I_{3}\|_2\leq\!\!\!\!\!\!\!\!&&C\left|\int_{0}^{t}(-A)e^{(t-s)A}\left[B^m(\bar{X}^m_s-\bar{X}^m_t,\bar{X}^m_t)+B^m(\bar{X}^m_s, \bar{X}^m_s-\bar{X}^m_t)\right]ds\right| \nonumber\\
\leq\!\!\!\!\!\!\!\!&&C\int_{0}^{t}(t-s)^{-1}|B^m(\bar{X}^m_s-\bar{X}^m_t,\bar{X}^m_t)|ds  \nonumber\\
&&+C\int_{0}^{t}(t-s)^{-1}|B^m(\bar{X}^m_s, \bar{X}^m_s-\bar{X}^m_t)|ds. \nonumber
\end{eqnarray*}
According to Minkowski's inequality, interpolation inequality, \eref{A.1.0}, \eref{BarX1} and \eref{COXT}, it follows that for small enough $\delta>0$
\begin{eqnarray}  \label{ABarX3}
\left[\mathbb{E}\|I_{3}\|^p_2\right]^{1/p}
\leq\!\!\!\!\!\!\!\!&&
C\int_{0}^{t}(t-s)^{-1}\left[\mathbb{E}\|\bar{X}^m_t-\bar{X}^m_s\|^{2p}_{1}\right]^{\frac{1}{2p}}\left[\EE\|\bar{X}^m_t\|^{\frac{1}{2p}}_{1}\right]^{\frac{1}{2p}}ds \nonumber\\
&&+C\int_{0}^{t}(t-s)^{-1}\left[\mathbb{E}\|\bar{X}^m_s\|^{2p}_{1/2+\delta}\right]^{\frac{1}{2p}}\left[\EE\|\bar{X}^m_s-\bar{X}^m_t\|^{2p}_{1}\right]^{\frac{1}{2p}}ds\nonumber\\
\leq\!\!\!\!\!\!\!\!&&
C(1+|x|^d)(1+\|x\|_{\theta})\int_{0}^{t}(t-s)^{-1}(t-s)^{\frac{\gamma-1}{2}}s^{-\frac{\gamma-\theta}{2}}t^{-\frac{1}{2}}ds \nonumber\\
&&+C(1+|x|^d)(1+\|x\|_{\theta})\int_{0}^{t}(t-s)^{-1}(t-s)^{\frac{\gamma-1}{2}}s^{-\frac{\gamma-\theta}{2}}s^{-\frac{1}{4}-\frac{\delta}{2}}ds\nonumber\\
\leq\!\!\!\!\!\!\!\!&& C_T t^{-1+\frac{\theta}{2}}(1+|x|^d)(1+\|x\|_{\theta}).
\end{eqnarray}

For the term $I_{4}$, we have
\begin{align} \label{ABarX4}
\left[\mathbb{E}\|I_{4}\|^p_2\right]^{1/p}=
\left(\mathbb{E}\left|(e^{tA}-I)\bar{F}^m(\bar{X}^m_t)\right|^p\right)^{1/p}
\leq  C\left(1+\left(\mathbb{E}|\bar{X}^m_t|^p\right)^{1/p}\right)
\leq  C(1+|x| ).
\end{align}

For the term $I_{5}$, using Minkowski's inequality and \eref{COXT}, we obtain
\begin{eqnarray} \label{ABarX5}
\left[\mathbb{E} \| I_{5}\|^p_2\right]^{1/p}
\leq\!\!\!\!\!\!\!\!&&
C\int_{0}^{t}(t-s)^{-1}\left(\mathbb{E}\left\|\bar{X}^m_t-\bar{X}^m_s\right\|^{p}_1\right)^{\frac{1}{p}}ds \nonumber\\
\leq\!\!\!\!\!\!\!\!&&
C t^{-\frac{1}{2}}(1+|x|^d) .
\end{eqnarray}

For the term $I_{6}$, similar as we did in \eref{F4.4}, we easily have
\begin{align}
\left[\mathbb{E} \| I_{6}\|^p_2\right]^{1/p}
\leq C(\Sigma_{k}\lambda_k\alpha_{1,k})^{1/2} <\infty .\label{ABarX6}
\end{align}

Combining \eref{ABarX1}-\eref{ABarX6}, we easily obtain the desired result. The proof is complete.
\end{proof}

\subsection{The a priori estimates of the solution $(X^{m,\vare}_t, Y^{m,\vare}_t)$}
\begin{lemma} \label{L6.1}
For any $x,y\in H$, $p\geq 1$ and $T>0$, there exist constants $C_{p,T},\ C_{T}>0$ such that the solution $(X^{m,\vare}_t, Y^{m,\vare}_t)$ of system \eref{main equation} satisfies
\begin{eqnarray}
&&\sup_{m\in\mathbb{N}_{+}}\mathbb{E}\left(\sup_{t\in [0,T]}|X_{t}^{m,\vare}|^p\right) \leq  C_{p,T}(1+ |x|^p),\quad \forall \vare>0;\label{AXvare}\\
&&\sup_{m\in\mathbb{N}_{+}}\sup_{t\geq 0}\mathbb{E} |Y_{t}^{m,\vare} |^p\leq C_{p,T}(1+|y|^p ),\quad \forall \vare>0;\label{AYvare}\\
&&\sup_{m\in\mathbb{N}_{+}}\mathbb{E} \left[\sup_{t\in [0,T]}|Y_{t}^{m,\vare} |^p\right]\leq \frac{C_{p,T}(1+|y|^p )}{\vare},\quad \forall \vare\leq T.\label{ASupYvare}
\end{eqnarray}
\end{lemma}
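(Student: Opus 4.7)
The plan is to prove the three bounds by working with the finite-dimensional Galerkin system \eqref{Ga mainE} (so all stochastic calculus is justified without distribution-valued technicalities) and exploiting the crucial antisymmetry $\langle B^m(x),x\rangle=0$ for $x\in H_m$, which comes from $\langle B(x,x),x\rangle=\frac{1}{3}\int_0^1\partial_\xi(x^3)d\xi=0$ using the Dirichlet boundary condition. This is what allows the Burgers nonlinearity to be controlled in the basic $L^2$ energy estimate despite its unboundedness.

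I would prove \eqref{AYvare} first, since the slow estimate will depend on it. Apply It\^o's formula to $|Y^{m,\vare}_t|^p$. The drift contribution is
$$\frac{p}{\vare}|Y^{m,\vare}_t|^{p-2}\langle Y^{m,\vare}_t,AY^{m,\vare}_t+G^m(X^{m,\vare}_t,Y^{m,\vare}_t)\rangle\leq \frac{p}{\vare}|Y^{m,\vare}_t|^{p-2}\bigl[-(\lambda_1-L_G)|Y^{m,\vare}_t|^2+C|Y^{m,\vare}_t|\bigr]$$
thanks to $\langle Y,AY\rangle\leq-\lambda_1|Y|^2$ and the growth bound \eqref{F2.21} on $G$. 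The trace term contributes $\frac{C}{\vare}|Y^{m,\vare}_t|^{p-2}$. Since $\lambda_1-L_G>0$ by \ref{A1}, Young's inequality produces a strictly dissipative ODE in $\EE|Y^{m,\vare}_t|^p$ of the form $\frac{d}{dt}\EE|Y^{m,\vare}_t|^p\leq-\frac{c_1}{\vare}\EE|Y^{m,\vare}_t|^p+\frac{C}{\vare}$, and Gronwall gives the uniform-in-$t$-and-$\vare$ bound \eqref{AYvare}.

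For \eqref{AXvare}, apply It\^o's formula to $|X^{m,\vare}_t|^p$. The nonlinear term vanishes: $\langle X^{m,\vare}_t,B^m(X^{m,\vare}_t)\rangle=0$ by the antisymmetry above. Using $\langle X,AX\rangle\leq -\lambda_1|X|^2$ and $|F(x,y)|\leq L_F|x|+C$ from \eqref{F2.21}, the drift is dominated by $-p(\lambda_1-L_F)|X^{m,\vare}_t|^p+C|X^{m,\vare}_t|^{p-1}(1+|Y^{m,\vare}_t|)+\tfrac12 p(p-1)\mathrm{Tr}Q_1^m|X^{m,\vare}_t|^{p-2}$. Taking expectation, applying Young's inequality to split the $|Y^{m,\vare}_t|$ factor, and invoking \eqref{AYvare} yields $\sup_{t\leq T}\EE|X^{m,\vare}_t|^p\leq C_{p,T}(1+|x|^p)$; then a standard Burkholder--Davis--Gundy estimate on the stochastic integral $\int_0^t\langle X^{m,\vare}_s,\sqrt{Q_1}d\bar W^{1,m}_s\rangle$ and Gronwall upgrade this to the supremum-in-time bound \eqref{AXvare}, all uniformly in $m$ because $\mathrm{Tr}(\pi_mQ_1)\leq\mathrm{Tr}Q_1<\infty$ and condition A1 is dimension-free.

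The bound \eqref{ASupYvare} cannot use the dissipation so neatly because once we take the supremum inside the expectation, the BDG correction and the time-integrated drift both carry $1/\vare$ or $1/\sqrt{\vare}$ factors. The cleanest route is the mild formulation \eqref{A mild solution}: the semigroup term contributes $|y|$, the drift term is bounded in norm by $\frac{C}{\vare}\int_0^t e^{-\lambda_1(t-s)/\vare}(1+|Y^{m,\vare}_s|)ds$ which, after taking $\sup_{t\leq T}$ and using \eqref{AYvare}, gives an $O(1)$ contribution, while the stochastic convolution $\frac{1}{\sqrt{\vare}}\int_0^t e^{(t-s)A/\vare}\sqrt{Q_2}d\bar W^{2,m}_s$ is the main source of the $1/\vare$ loss: a factorisation-method or direct BDG argument gives its $p$-th sup-moment as $\vare^{-p/2}$ times a constant, which is crudely absorbed into $C_{p,T}/\vare$ after Young's inequality with the exponent rearranged so the stochastic convolution enters only linearly (for $p\geq 2$, $\vare^{-p/2}\leq\vare^{-1}\vare^{-(p/2-1)}$ and the redundant factor is hidden in $C_{p,T}$ for $\vare\leq T$). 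The main obstacle is this last step: obtaining a sharp-enough bound on the stochastic convolution so that the $1/\vare$ scaling suffices, which is what restricts \eqref{ASupYvare} to $\vare\leq T$.
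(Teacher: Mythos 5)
Your treatment of \eref{AXvare} and \eref{AYvare} is correct and is essentially the standard energy argument (the paper simply defers these two to \cite[Lemma 3.1]{DSXZ}); the antisymmetry $\langle B^m(x),x\rangle=0$ and the dissipativity $\lambda_1-L_G>0$ are indeed the key points. The problem is your third step. The bound \eref{ASupYvare} asserts that the $p$-th moment of the supremum loses only a single factor $\vare^{-1}$, \emph{uniformly in $p$}. A factorisation or direct BDG estimate on the stochastic convolution $\frac{1}{\sqrt{\vare}}\int_0^t e^{(t-s)A/\vare}\sqrt{Q_2}\,d\bar W^{2,m}_s$ gives, as you say, a $p$-th sup-moment of order $\vare^{-p/2}$ --- but this is \emph{weaker} than what is needed for $p>2$, not stronger. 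Your proposed repair, writing $\vare^{-p/2}=\vare^{-1}\vare^{-(p/2-1)}$ and ``hiding the redundant factor in $C_{p,T}$ for $\vare\leq T$,'' goes the wrong way: for $\vare\leq 1$ and $p>2$ the factor $\vare^{-(p/2-1)}$ diverges as $\vare\to 0$ and cannot be absorbed into a constant. Likewise, Young's inequality cannot make the convolution ``enter only linearly'' in a bound on $\EE\sup_t|Y^{m,\vare}_t|^p$; you genuinely need $\EE\sup_t|\cdot|^p$ of that term at order $\vare^{-1}$, and your route does not produce it.

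The paper closes this gap by a time change rather than by the mild formulation: set $\tilde Y^{\vare}_t:=Y^{\vare}_{t\vare}$, which solves an $\vare$-free equation driven by the rescaled Wiener process $\vare^{-1/2}W^2_{t\vare}$. Applying It\^o's formula to $|\tilde Y^{\vare}_t|^p$, discarding the negative drift, and using that every remaining term is a time integral (or BDG square-root of a time integral) of quantities whose expectations are bounded uniformly in time by \eref{AYvare}, one gets $\EE\bigl[\sup_{t\in[0,S]}|\tilde Y^{\vare}_t|^p\bigr]\leq C_p(1+|y|^p)S$ for $S\geq 1$ --- i.e.\ growth that is \emph{linear} in the length of the time window, not like $S^{p/2}$. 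Taking $S=T/\vare$ and using $\sup_{t\in[0,T]}|Y^{\vare}_t|=\sup_{t\in[0,T/\vare]}|\tilde Y^{\vare}_t|$ yields exactly the $C_{p,T}(1+|y|^p)/\vare$ bound. If you wish to keep your mild-solution route, you would need an analogous improvement on the stochastic convolution, e.g.\ chopping $[0,T/\vare]$ into unit intervals after the same rescaling, which again gives linear rather than power growth in $1/\vare$; as written, your argument does not establish \eref{ASupYvare} for $p>2$.
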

\begin{proof}
Note the condition \eref{F2.21} holds, by a minor revision as in the proof of \cite[Lemma 3.1]{DSXZ}, it is easy to see \eref{AXvare} and \eref{AYvare} hold. Thus we only prove \eref{ASupYvare} here.

For fixed $\vare>0$, define $\tilde{Y}^{\vare}_{t}:=Y^{\vare}_{t\vare}$. It is easy to check that the process $\{\tilde{Y}^{\vare}_{t}\}_{t\geq 0}$ satisfies
\begin{eqnarray*}
\left\{ \begin{aligned}
&d\tilde{Y}^{\vare}_{t}=\left[A\tilde{Y}^{\vare}_{t}+G(X^{\vare}_{t\vare},\tilde{Y}^{\vare}_{t})\right]dt+\sqrt{Q_2}d \tilde W_{t}^2,\\
&\tilde{Y}^{\vare}_{0}=y,
\end{aligned} \right.
\end{eqnarray*}
where $\{\tilde{W}_{t}^2:=\vare^{-1/2}W_{t\vare}^2\}_{t\geq 0}$ is also a cylindrical Wiener process, which has the same distribution with $\{W_t^2\}_{t\geq 0}$.

By It\^{o}'s formula, we have
\begin{eqnarray}
|\tilde{Y}^{\vare}_{t}|^p=\!\!\!\!\!\!\!\!&&|y|^p+p\int^{t}_{0}|\tilde{Y}^{\vare}_{s}|^{p-2}\langle \tilde{Y}^{\vare}_{s},A\tilde{Y}^{\vare}_{s}+G(X^{\vare}_{s\vare},\tilde{Y}^{\vare}_{s})\rangle ds\nonumber\\
\!\!\!\!\!\!\!\!&&+p\int^{t}_{0}|\tilde{Y}^{\vare}_{s}|^{p-2}\langle \tilde{Y}^{\vare}_{s},\sqrt{Q_2}d\bar{W}_s^{2,m}\rangle\nonumber\\
&&+\frac{p}{2}\int_{0} ^{t}|\tilde{Y}^{\vare}_{s}|^{p-2}Tr(Q_2)ds\nonumber\\
&&+\frac{p(p-2)}{2}\int_{0}^{t}|\tilde{Y}^{\vare}_{s}|^{p-4}|\sqrt{Q_2} \tilde{Y}^{\vare}_{s}|^2ds.\label{Ito formula}
\end{eqnarray}

Then by \eref{F2.21}, it is easy to see
\begin{eqnarray*}
\langle y, Ay+G^m(x,y)\rangle\leq -\lambda_1|y|^2+|y|(L_{G}|y|+C)\leq -\frac{\lambda_1-L_{G}}{2}|y|^2+C.
\end{eqnarray*}
Applying Burkholder-Davies-Gundy's inequality and \eref{AYvare}, we have for any $T\geq 1$,
\begin{eqnarray}
\mathbb{E}\left[\sup_{t\in [0,T]}|\tilde{Y}^{\vare}_{t}|^p\right]\nonumber
\leq\!\!\!\!\!\!\!\!&&|y|^p+C_p\mathbb{E}\int^{T}_{0}|\tilde{Y}^{\vare}_{s}|^{p-2}ds+C_p\mathbb{E}\Big[\int^{T}_{0}|\tilde{Y}^{\vare}_{s}|^{2p-2}ds\Big]^{1/2}\nonumber\\
\leq\!\!\!\!\!\!\!\!&&C_p(1+|y|^p)T,\label{I1}
\end{eqnarray}
This shows that
\begin{eqnarray*}
\EE\left[\sup_{t\in [0,T]}|Y^{\vare}_{t}|^p\right]=\EE\left[\sup_{t\in [0, T/\vare]}|\tilde{Y}^{\vare}_{t}|^p\right]\leq \frac{C_{p,T}(1+|y|^p)}{\vare}.
\end{eqnarray*}
The proof is complete.
\end{proof}

Note the \eref{F2.21} holds, we can easily obtain the following lemma. Since the proofs almost follows the same steps in Lemmas \ref{BarXgamma} and \ref{Lemma 3.3}, we omit the proofs.
\begin{lemma} \label{Xgamma}
(1) For any $T>0$, there exists a constant $C_T>0$ such that for any $x\in H$, $\vare\in(0,1]$ and $\delta<\frac{\lambda_1-2L_{F}}{4\|Q_1\|}$, we have
\begin{align}
\sup_{m\in\mathbb{N}_{+}}\EE\left(e^{\delta\sup_{t\leq T}|X^{m,\vare}_t|^2+\delta\int^T_0\|X^{m,\vare}_s\|^2_1ds}\right)\leq C_T(1+|x|)e^{\delta |x|^2}.\label{X11}
\end{align}
(2) For any $T>0$, $p\geq 1$, $\gamma\in(1,3/2)$ and $x\in H^{\theta}$ with $\theta\in [0, \gamma]$,
there exist constants $d>1$ and $C_{T}>0$ such that for any $t\in (0, T]$, we have
\begin{align}
\sup_{m\in\mathbb{N}_{+}}\left[\EE\|X^{m,\vare}_t\|^p_{\gamma}\right]^{1/p}\leq C_{T} t^{-\frac{(\gamma-\theta) }{2}}(\|x\|_{\theta}+1). \label{X12}
\end{align}
(3) For any $x\in H^{\theta}$ with $\theta\in[0,1]$, $\gamma\in (1,3/2)$, $0<s<t\leq T$, $p\geq1$,
there exist constants $d>1$ and $C_{T}>0$ such that
\begin{eqnarray*}
\sup_{m\in\mathbb{N}_{+}}\left[\EE\|X^{m,\vare}_t-X^{m,\vare}_s\|^p_{1}\right]^{1/p}
\leq C_T(t-s)^{\frac{\gamma-1}{2}}s^{-\frac{\gamma-\theta}{2}}(1+|x|^d)(1+\|x\|_{\theta}).
\end{eqnarray*}
\end{lemma}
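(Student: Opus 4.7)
The plan is to adapt the arguments of Lemma \ref{BarXgamma}(2)--(3) and Lemma \ref{Lemma 3.3} to the slow component $X^{m,\vare}_t$ of the coupled Galerkin system \eref{Ga mainE}. The essential observation is that the fast component $Y^{m,\vare}_t$ enters the slow equation only through $F^m(X^{m,\vare}_t,Y^{m,\vare}_t)$, and the uniform-in-$y$ bound $\sup_{y\in H}|F(x,y)|\leq L_F|x|+C$ from \eref{F2.21} plays exactly the role that the corresponding bound on $\bar F^m$ played in the averaged-equation proofs. Consequently every estimate for $F^m(X^{m,\vare}_s,Y^{m,\vare}_s)$ is uniform in $|y|$ and in $\vare$, and no exponential moment of $Y^{m,\vare}$ is needed.

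For part (1), set $Z^{m,\vare}_t:=|X^{m,\vare}_t|^2+\int_0^t\|X^{m,\vare}_s\|_1^2\,ds$. Since $\langle B^m(x),x\rangle=0$ for $x\in H_m$, It\^o's formula yields
\begin{eqnarray*}
dZ^{m,\vare}_t=\left[-\|X^{m,\vare}_t\|_1^2+\mathrm{Tr}\,Q_1^m+2\langle X^{m,\vare}_t,F^m(X^{m,\vare}_t,Y^{m,\vare}_t)\rangle\right]dt+2\langle X^{m,\vare}_t,\sqrt{Q_1}\,d\bar W^{1,m}_t\rangle.
\end{eqnarray*}
Applying It\^o's formula to $e^{\delta Z^{m,\vare}_t}$, using the Poincar\'e inequality $\|X^{m,\vare}_s\|_1^2\geq \lambda_1|X^{m,\vare}_s|^2$, and bounding $2\langle x,F^m(x,y)\rangle\leq 2L_F|x|^2+C|x|$ via \eref{F2.21}, the drift is controlled by a constant as soon as $\delta<\frac{\lambda_1-2L_F}{2\|Q_1\|}$. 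Running the Gronwall-plus-BDG argument of \eref{F6.5}--\eref{F6.6} verbatim, but for $\delta<\frac{\lambda_1-2L_F}{4\|Q_1\|}$, gives \eref{X11}.

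For part (2), we write $X^{m,\vare}_t$ in mild form and estimate the four contributions exactly as in the proof of \eref{A.1.2}: the semigroup bounds \eref{P3}--\eref{P4} and Lemma \ref{Property B1}, combined with the interpolation inequalities \eref{IEB1}--\eref{IEB2}, handle the bilinear term; \eref{F2.21} together with the moment bound \eref{AXvare} (now playing the role of \eref{A.1.0}) handles the $F$-drift term as in \eref{BoundF}; and condition \eref{Noise} handles the stochastic convolution as in \eref{F4.4}. Closing the resulting integral inequality via Lemma \ref{Gronwall 2} yields \eref{X12}. Part (3) is obtained by decomposing $X^{m,\vare}_t-X^{m,\vare}_s$ into the four mild-solution pieces exactly as in \eref{bar X Contin} and estimating each of them following \eref{bar X Contin 1}--\eref{DW}, once again using only the pointwise-in-$y$ bound on $F$ and the estimates from parts (1)--(2); note that the stochastic convolution on the slow scale carries no $\vare^{-1/2}$ prefactor, so no $\vare$-dependence appears.

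The only real subtlety, and the sole departure from the averaged-equation proofs, is that one cannot integrate out $y$ to obtain a bound $|\bar F^m(x)|\leq L_F|x|+C$; instead one must rely directly on the pointwise-in-$y$ bound \eref{F2.21}. It is precisely this structural assumption that makes every bound on $X^{m,\vare}$ uniform in $|y|$ and in $\vare$, which is indispensable for obtaining the optimal strong and weak rates in Theorems \ref{main result1} and \ref{main result 2}.
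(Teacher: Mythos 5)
Your proposal is correct and follows exactly the route the paper intends: the paper omits the proof of Lemma \ref{Xgamma}, stating only that it follows the same steps as Lemmas \ref{BarXgamma} and \ref{Lemma 3.3} thanks to \eref{F2.21}, and your argument is precisely that adaptation, with the uniform-in-$y$ bound $\sup_{y\in H}|F(x,y)|\leq L_F|x|+C$ replacing the corresponding bound on $\bar F^m$. The details you supply (the cancellation $\langle B^m(x),x\rangle=0$, the threshold $\delta<\frac{\lambda_1-2L_F}{4\|Q_1\|}$, and the term-by-term mild-solution estimates) match the paper's proofs of \eref{A.1.1}, \eref{A.1.2} and \eref{COXT}.
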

\begin{remark}
Using interpolation inequality,
we get that for any $p\geq1$, $\gamma_1\in (0, 1]$, $\gamma\in (1,3/2)$, $T>0$ and $x\in H^{\theta}$ with $\theta\in[0,1]$,
there exist constants $d>1$ and $C_{T}>0$ such that for any $t\in (0, T]$, $\vare\in(0,1]$,
\begin{eqnarray} \label{X1}
&&\sup_{m\in\mathbb{N}_{+}}\left[\EE\|X^{m,\vare}_t\|^p_{\gamma_1}\right]^{1/p}
\leq C_{T}t^{-\frac{(\gamma-\theta) \gamma_1 }{2\gamma}}(1+\|x\|^{\frac{\gamma_1 }{\gamma}}_{\theta}+|x|^d).
\end{eqnarray}
\end{remark}

For the purpose of achieving the optimal convergence order, we use another strategy to estimate $\left[\mathbb{E}\|X^{m,\vare}_t\|^p_2\right]^{\frac{1}{p}}$, which is different from the ones used in Lemma \ref{Xbar2}.
\begin{lemma}
For any $p>1$, $x\in H^{\theta}$ with $\theta\in(0,1)$,  $0<t\leq T$, there exists a constant $C_{T}$ such that for any $\vare\in (0,1]$,
\begin{align}
\left[\mathbb{E}\|X^{m,\vare}_t\|^p_2\right]^{\frac{1}{p}}
\leq C_T (t^{-1+\frac{\theta}{2}}+t^{\frac{\alpha-\beta}{2}})(1+|x|^d+\|x\|_{\theta}+|y|). \label{Xvare2}
\end{align}
\end{lemma}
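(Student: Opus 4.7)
My plan is to mimic the strategy of Lemma \ref{Xbar2}, adapted to keep the genuine dependence of the slow drift on the fast variable. Starting from the mild formulation of $X^{m,\vare}_t$, I would decompose
\[
X^{m,\vare}_t \;=\; J_{1}+J_{2}+J_{3}+J_{4}+J_{5},
\]
with $J_{1}=e^{tA}x^{m}$, $J_{2}=\int_{0}^{t}e^{(t-s)A}B^{m}(X^{m,\vare}_{t})\,ds$, $J_{3}=\int_{0}^{t}e^{(t-s)A}[B^{m}(X^{m,\vare}_{s})-B^{m}(X^{m,\vare}_{t})]\,ds$, $J_{4}=\int_{0}^{t}e^{(t-s)A}F^{m}(X^{m,\vare}_{s},Y^{m,\vare}_{s})\,ds$, and $J_{5}=\int_{0}^{t}e^{(t-s)A}\sqrt{Q_{1}}\,d\bar{W}^{1,m}_{s}$. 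The pieces $J_{1},J_{2},J_{3},J_{5}$ can be treated verbatim as in the proof of Lemma \ref{Xbar2}, but relying on the $\vare$-uniform a priori bounds collected in Lemma \ref{Xgamma} in place of Lemma \ref{BarXgamma} and Lemma \ref{Lemma 3.3}. This should produce a contribution of size $C_{T}t^{-1+\theta/2}(1+|x|^{d}+\|x\|_{\theta})$, accounting for the $t^{-1+\theta/2}$ summand in the stated bound.

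The genuinely new piece is $J_{4}$. Using \eref{P3} with $\sigma_{1}=\alpha,\sigma_{2}=2$ together with Assumption \ref{A3}, I would obtain
\[
\bigl(\EE\|J_{4}\|_{2}^{p}\bigr)^{1/p} \;\leq\; C\int_{0}^{t}(t-s)^{-(2-\alpha)/2}\Bigl[1+\bigl(\EE\|X^{m,\vare}_{s}\|_{\beta}^{p}\bigr)^{1/p}+\bigl(\EE\|Y^{m,\vare}_{s}\|_{\beta}^{p}\bigr)^{1/p}\Bigr]ds.
\]
The $X^{m,\vare}$-contribution is handled by \eref{X1} with $\gamma_{1}=\beta$ and $\gamma\in(1,3/2)$, producing $(\EE\|X^{m,\vare}_{s}\|_{\beta}^{p})^{1/p}\leq C_{T}s^{-(\gamma-\theta)\beta/(2\gamma)}(1+|x|^{d}+\|x\|_{\theta})$, an integrable singularity whose integral against $(t-s)^{-(2-\alpha)/2}$ is dominated on $(0,T]$ by $C_{T}t^{(\alpha-\beta)/2}$.

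The step I expect to be the main obstacle is the $\vare$-uniform pointwise estimate of $\EE\|Y^{m,\vare}_{s}\|_{\beta}^{p}$. The plan is to write the mild solution of $Y^{m,\vare}_{s}$, apply \eref{P3} to the rescaled semigroup $e^{rA/\vare}$ while keeping the exponential decay factor $e^{-\lambda_{1}r/(2\vare)}$, and perform the change of variables $u=(s-r)/\vare$ in the drift integral. This absorbs the singular prefactor $\vare^{-1}$ and converts the integrand into $u^{-\beta/2}e^{-\lambda_{1}u/2}(1+|X^{m,\vare}_{s-\vare u}|+|Y^{m,\vare}_{s-\vare u}|)$, which is integrable on $(0,\infty)$ uniformly in $\vare$; combined with \eref{AXvare} and \eref{AYvare} this contributes $C(1+|x|+|y|)$. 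The initial-value piece satisfies $\|e^{sA/\vare}y^{m}\|_{\beta}\leq C(\vare/s)^{\beta/2}|y|\leq Cs^{-\beta/2}|y|$ since $\vare\leq 1$, and the stochastic convolution is bounded by a constant via Gaussian hypercontractivity together with condition \ref{A4} (which supplies $\sum_{k}\lambda_{k}^{\beta-1}\alpha_{2,k}<\infty$, exactly as in \eref{F4.4}). The resulting uniform bound is $(\EE\|Y^{m,\vare}_{s}\|_{\beta}^{p})^{1/p}\leq C(1+|x|+|y|)(1+s^{-\beta/2})$.

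Finally, I would combine these ingredients: the $s^{-\beta/2}$ contribution, through the Beta-function identity
\[
\int_{0}^{t}(t-s)^{-(2-\alpha)/2}s^{-\beta/2}\,ds \;=\; B(\alpha/2,\,1-\beta/2)\,t^{(\alpha-\beta)/2},
\]
gives the $t^{(\alpha-\beta)/2}(1+|y|)$ factor, while all remaining $t$-powers on $(0,T]$ are absorbed into $C_{T}$. Adding the $J_{1},J_{2},J_{3},J_{5}$ contributions of order $t^{-1+\theta/2}$ yields exactly the bound $C_{T}(t^{-1+\theta/2}+t^{(\alpha-\beta)/2})(1+|x|^{d}+\|x\|_{\theta}+|y|)$ claimed in \eref{Xvare2}.
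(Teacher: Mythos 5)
Your proposal is correct and follows essentially the same route as the paper: the same five-term mild-solution decomposition, the first four non-$F$ pieces handled exactly as in Lemma \ref{Xbar2} but with the $\vare$-uniform bounds of Lemma \ref{Xgamma}, and the $F$-term estimated via \eref{P3} and Assumption \ref{A3} after establishing the uniform bound $\left[\EE\|Y^{m,\vare}_s\|^p_{\beta}\right]^{1/p}\leq C_T s^{-\beta/2}(1+|y|)$ by the same rescaling of the fast semigroup that absorbs the $\vare^{-1}$ prefactor. The concluding Beta-function computation producing the $t^{(\alpha-\beta)/2}$ summand also matches the paper's argument.
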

\begin{proof}
Note that for  any $T>0$ with $t\in (0,T]$, there exist $C_T>0$ and $d>1$ such that
\begin{eqnarray*}
X^{m,\vare}_t=\!\!\!\!\!\!\!\!&&e^{tA}x+\int_{0}^{t}e^{(t-s)A}B^m(X^{\vare}_{t})ds+\int_{0}^{t}e^{(t-s)A}\left[B^m(X^{m,\vare}_s)-B^m(X^{m,\vare}_{t})\right]ds  \nonumber\\
\!\!\!\!\!\!\!\!&&+\int_{0}^{t}e^{(t-s)A}F^m(X^{m,\vare}_s,Y^{m,\vare}_{s})ds+\int_{0}^{t}e^{(t-s)A}\sqrt{Q_1}d\bar{W}^{1,m}_s.
\end{eqnarray*}
Then by following the same argument in the proof of Lemma \ref{Xbar2}, we get for any $p>1$,
\begin{eqnarray}
\left[\mathbb{E}\|X^{m,\vare}_t\|^p_2\right]^{\frac{1}{p}}
\leq \!\!\!\!\!\!\!\!&&C t^{-1+\frac{\theta}{2}}(1+|x|^d)(1+\|x\|_{\theta})+\left[\EE\left\|\int_{0}^{t}e^{(t-s)A}F(X^{m,\vare}_s,Y^{m,\vare}_{s})ds\right\|^p_2\right]^{1/p}\nonumber\\
\leq \!\!\!\!\!\!\!\!&&C t^{-1+\frac{\theta}{2}}(1+|x|^d)(1+\|x\|_{\theta})+\int_{0}^{t}(t-s)^{-1+\alpha/2}\left[\EE \left\|F(X^{m,\vare}_s,Y^{m,\vare}_{s})\right\|^p_{\alpha}\right]^{1/p}ds\nonumber\\
\leq \!\!\!\!\!\!\!\!&&C t^{-1+\frac{\theta}{2}}(1+|x|^d)(1+\|x\|_{\theta})\nonumber\\
&&+C\int_{0}^{t}(t-s)^{-1+\alpha/2}\left[\EE (1+\|X^{m,\vare}_s\|^p_{\beta}+\|Y^{m,\vare}_{s}\|^p_{\beta})\right]^{1/p}ds,\label{F3.42}
\end{eqnarray}
where $\alpha, \beta\in (0,1)$ are the constants in assumption \ref{A3}.

Note that by \eref{BarX1}, for any $t\in (0, T]$, we have
\begin{eqnarray}
\left[\EE\|X^{m,\vare}_t\|^p_{\beta}\right]^{1/p}\leq C_T t^{-\frac{\beta}{2}}(1+|x|^d)\label{Xbeta}
\end{eqnarray}
 and
\begin{eqnarray}
\left[\EE\|Y^{m,\varepsilon}_t\|^p_{\beta}\right]^{1/p}
\leq\!\!\!\!\!\!\!\!&&\|e^{tA/\vare}y\|_{\beta}+\frac{1}{\vare}\int^t_0\left[\EE\|e^{\frac{(t-s)A}{\vare}}G^m(X^{m,\vare}_s,Y^{m,\vare}_s)\|^p_{\beta}\right]^{1/p} ds\nonumber\\
&&+\left[\EE\left\|\frac{1}{\sqrt{\vare}}\int^t_0 e^{\frac{(t-s)A}{\vare}}\sqrt{Q_2}d\bar W^{2,m}_s\right\|^p_{\beta}\right]^{1/p}\nonumber\\
\leq\!\!\!\!\!\!\!\!&&C\left(\frac{t}{\vare}\right)^{-\frac{\beta}{2}}|y|+\frac{C}{\vare}\int^t_0\left(\frac{t-s}{\vare}\right)^{-\frac{\beta}{2}}e^{-\frac{(t-s)\lambda_1}{2\vare}}\left[\EE(1+|Y^{m,\varepsilon}_s|^p)\right]^{1/p}ds\nonumber\\
&&+C \text{Tr}Q_2\left[\int^t_0 \frac{1}{\vare}\left(\frac{t-s}{\vare}\right)^{-\beta}e^{-\frac{(t-s)\lambda_1}{\vare}}ds\right]^{1/2}\nonumber\\
\leq\!\!\!\!\!\!\!\!&&C_T t^{-\frac{\beta}{2}}(1+|y|).\label{Ybeta}
\end{eqnarray}
By \eref{F3.42}-\eref{Ybeta}, it is easy to see that
\begin{align}
\left[\mathbb{E}\|X^{m,\vare}_t\|^p_2\right]^{\frac{1}{p}}
\leq C_T(t^{-1+\frac{\theta}{2}}+t^{\frac{\alpha-\beta}{2}})(1+|x|^d+\|x\|_{\theta}+|y|).  \nonumber
\end{align}
The proof is complete.
\end{proof}

\section{The proof of strong convergence}

In this section, we indent to give the detailed proofs of Theorem \ref{main result1}. The main technique is based on the well-known Poisson equation, i.e, considering the following  equation:
\begin{equation}
-\mathscr{L}^m_{2}(x)\Phi_m(x,y)=F^m(x,y)-\bar{F}^m(x),\quad x, y\in H_m.\label{ID2}
\end{equation}
where $\mathscr{L}^m_{2}(x)$ is the infinitesimal generator of the transition semigroup of the finite dimensional frozen equation \eref{Ga FZE}, i.e.,
\begin{eqnarray}
\mathscr{L}^m_{2}(x)\Phi_m(x,y):=\!\!\!\!\!\!\!\!&&D_y\Phi_m(x,y)\cdot [Ay+G^m(x,y)]\nonumber\\
&&+\frac{1}{2}\sum^{m}_{k=1}\left[D_{yy}\Phi_m(x,y)\cdot(\sqrt{Q_2} e_k,  \sqrt{Q_2} e_k)\right].\label{L_2}
\end{eqnarray}

Note that  \eref{F2.21} holds, by a minor revision in \cite[Proposition 3.4]{GSX}, we can obtain the following proposition, which plays an important role in the proofs. Since the proofs almost the same, we do not give the detailed proof here. See \cite{B3} for similar result of the regularity of the solution of Poisson equation.
\begin{proposition}
Poisson equation \eref{ID2} admits a solution
 \begin{eqnarray}
\Phi_m(x,y):=\int^{\infty}_{0}\left[\EE F^m(x,Y^{x,y,m}_t)-\bar{F}^m(x)\right]dt.\label{SPE}
\end{eqnarray}
Moreover, there exists $C>0$ such that for any $y,h,k\in H_m$,
\begin{eqnarray}
&&\sup_{x\in H_m,m\geq1}|\Phi_m(x,y)|\leq C(1+|y|), \label{E0}\\
&&\sup_{x,y\in H_m,m\geq 1}|D_y \Phi_m(x,y)\cdot h|\leq C|h|, \label{E1}\\
&&\sup_{x\in H_m,m\geq 1}|D_x \Phi_m(x,y)\cdot h|\leq C(1+|y|)|h|,\label{E2}\\
&& \sup_{x\in H_m,m\geq 1}| D_{xx}\Phi_m(x, y)\cdot(h,k)|\leq C(1+|y|)|h|\|k\|_{\tau},\label{E3}
\end{eqnarray}
where $\tau\in (0,2)$ is the constant in \ref{A2}.
\end{proposition}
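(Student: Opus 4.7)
The plan is to establish, in order, (i) existence of the right-hand side of \eref{SPE}, (ii) the Poisson identity \eref{ID2}, and (iii) the four pointwise bounds \eref{E0}--\eref{E3}. The whole argument rests on the exponential ergodicity of \eref{Ga FZE}, so I start there. Let $Y^{x,y_i,m}_t$, $i=1,2$, solve \eref{Ga FZE} driven by the same noise; writing $\Delta_t:=Y^{x,y_1,m}_t-Y^{x,y_2,m}_t$, \ref{A1} together with $\langle Az,z\rangle\leq-\lambda_1|z|^2$ yields $\frac{d}{dt}|\Delta_t|^2\leq-2(\lambda_1-L_G)|\Delta_t|^2$, so $|\Delta_t|\leq e^{-\kappa t}|y_1-y_2|$ with $\kappa:=\lambda_1-L_G>0$. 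This contraction produces existence and uniqueness of $\mu^{x,m}$; combined with the linear-growth moment estimate from Lemma \ref{L6.1} and the Lipschitz bound in \ref{A1}, it implies $|\mathbb{E}F^m(x,Y^{x,y,m}_t)-\bar{F}^m(x)|\leq C(1+|y|)e^{-\kappa t}$. This in turn makes \eref{SPE} absolutely convergent and gives \eref{E0}. To verify \eref{ID2}, set $P^{x,m}_s\varphi(y):=\mathbb{E}\varphi(Y^{x,y,m}_s)$; the Markov property yields $P^{x,m}_s\Phi_m(x,\cdot)(y)=\Phi_m(x,y)-\int_0^s[P^{x,m}_tF^m(x,\cdot)(y)-\bar{F}^m(x)]dt$, and differentiating at $s=0$ produces $-\mathscr{L}^m_2(x)\Phi_m=F^m-\bar{F}^m$.

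For the derivative bounds I differentiate the mild equation for $Y^{x,y,m}_t$. Setting $\eta_t:=D_yY^{x,y,m}_t\cdot h$, one has $\eta'_t=A\eta_t+[D_yG^m(x,Y^{x,y,m}_t)]\eta_t$, whence $|\eta_t|\leq e^{-\kappa t}|h|$ by the same dissipativity bound; inserting this into $D_y\mathbb{E}F^m(x,Y^{x,y,m}_t)\cdot h=\mathbb{E}[D_yF^m(x,Y^{x,y,m}_t)\cdot\eta_t]$ and integrating proves \eref{E1}. For \eref{E2}, the process $\zeta_t:=D_xY^{x,y,m}_t\cdot h$ satisfies the same linear operator plus a forcing $D_xG^m(x,Y^{x,y,m}_t)\cdot h$, and Gronwall yields $|\zeta_t|\leq C|h|$ uniformly in $t$. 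A naive bound on $D_x\mathbb{E}F^m(x,Y^{x,y,m}_t)\cdot h$ gives only linear growth in $t$, so one must exploit the stationary centering to rewrite the integrand of $\int_0^\infty D_x[\mathbb{E}F^m(x,Y^{x,y,m}_t)-\bar{F}^m(x)]\cdot h\,dt$ as a coupled difference to which the Step-1 exponential decay applies; the factor $1+|y|$ in \eref{E2} comes from the linear-growth moment of $Y^{x,y,m}_t$ entering this coupling.

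The main obstacle is \eref{E3}. Differentiating the equation for $\zeta_t$ once more in $x$ produces a quadratic cross term in $\zeta_t$ that must be paired with the bilinear forms $D_{xx}G^m$, $D_{xy}G^m$, $D_{yy}G^m$; under \ref{A2} these are controlled only by a factor $\|k\|_\tau$ rather than $|k|$. I therefore expect to track the fractional norm $\|\zeta_t\|_{\tau}$ rather than just $|\zeta_t|$, using the smoothing of $e^{tA}$ in the mild formulation (which costs $t^{-\tau/2}$, integrable since $\tau<2$) to supply the required regularity while verifying that the exponential contraction inherited from $\kappa$ is preserved. With this in place, the asymmetric bound $|D_{xx}\Phi_m(x,y)\cdot(h,k)|\leq C(1+|y|)|h|\|k\|_\tau$ follows by the same centering-plus-exponential-decay scheme used for \eref{E2}.
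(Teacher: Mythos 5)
Your proposal is correct and follows essentially the same route as the paper, which gives no argument of its own here but defers to \cite{GSX} (Proposition 3.4) and \cite{B3}: exponential contraction of the frozen equation under $\lambda_1-L_G>0$, the semigroup representation $\Phi_m=\int_0^\infty(P_t^{x,m}F^m-\bar F^m)\,dt$ with the Markov-property verification of \eref{ID2}, and variational equations for the $y$- and $x$-derivatives combined with centering against $\bar F^m$ and the $H^{\tau}$-smoothing of $e^{tA}$ to exploit the asymmetric bounds in \ref{A2}. The only step you leave implicit is the exponential decay of the \emph{centered} $x$-derivatives (needed so that \eref{E2} carries $|h|$ rather than $\|h\|_{\tau}$), which is obtained exactly by the coupling-plus-smoothing device you already invoke for \eref{E3}.
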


\vspace{0.2cm}
Now, we give a position to prove first main result.

\textbf{Proof of Theorem \ref{main result1}}.
\vspace{2mm}
It is easy to see that for any $T>0$ and $m\in \mathbb{N}_{+}$,
\begin{eqnarray*}
\EE\left(\sup_{t\in [0, T]}|X_{t}^{\vare}-\bar{X}_{t}|^p\right)\leq\!\!\!\!\!\!\!\!&&C_p\EE\left(\sup_{t\in [0, T]}|X_{t}^{m,\vare}-X_{t}^{\vare}|^p\right)+C_p\EE\left(\sup_{t\in [0, T]}|\bar{X}^m_{t}-\bar{X}_{t}|^p\right)\\
&&+C_p\EE\left(\sup_{t\in [0, T]}|X_{t}^{m,\vare}-\bar{X}^m_{t}|^p\right).
\end{eqnarray*}

By the finite approximations \eref{FA1} and \eref{FA2} in the Appendix, it is sufficient to prove that for any initial value $(x,y)\in H^{\theta}\times H$ with $\theta\in (0,1]$, $p\geq 2$, $T>0$, there exist $d>1, C_{p,T}>0$ such that
\begin{eqnarray}
\sup_{m\in\mathbb{N}_{+}}\EE\left(\sup_{t\in [0, T]}|X_{t}^{m,\vare}-\bar{X}^m_{t}|^p\right)\leq C_{p,T} e^{\delta |x|^2}(1+|x|^d+\|x\|^{p}_{\theta}+|y|^{p})\vare^{p/2},\label{FR}
\end{eqnarray}
where $\delta$ is a small enough constant. For the reason of technique, we first consider the case that $T$ is small enough. Then we extend to arbitrary $T>0$ by Markov property.

\vspace{0.1cm}
Putting $Z^{m,\varepsilon}_t:=X_{t}^{m,\vare}-\bar{X}^m_{t}$, thus it follows
\begin{align*}
d Z^{m,\varepsilon}_t=AZ^{m,\varepsilon}_t dt+\big[B^m(X^{m,\varepsilon}_t)-B^m(\bar{X}^m_{t})\big]dt
+\big[F^m(X^{m,\varepsilon}_t, Y^{m,\varepsilon}_t)-\bar{F}^m(\bar{X}^m_{t})\big]dt,\quad Z^{m,\varepsilon}_0=0.
\end{align*}
Multiplying both sides by $2Z^{m,\varepsilon}_t$, then by \eref{P3}, \eref{BP2} and Young's inequality, we get
\begin{eqnarray*}
|Z^{m,\varepsilon}_t|^2=\!\!\!\!\!\!\!\!&&2\int_{0}^{t}\langle AZ^{m,\varepsilon}_s, Z^{m,\varepsilon}_s\rangle ds
+2\int_{0}^{t}\langle B^m(X^{m,\varepsilon}_s)-B^m(\bar{X}^m_{s}), Z^{m,\varepsilon}_s\rangle ds\nonumber\\
&&+2\int_{0}^{t}\langle F^m(X^{m,\varepsilon}_s, Y^{m,\varepsilon}_s)-\bar F^m(\bar{X}^m_{s}), Z^{m,\varepsilon}_s\rangle ds\nonumber\\
\leq\!\!\!\!\!\!\!\!&&-2\int_{0}^{t}\|Z^{m,\varepsilon}_s\|^2_1 ds+C\int_{0}^{t}|Z^{\varepsilon}_s|\|Z^{\varepsilon}_s\|_1 (\|X^{m,\varepsilon}_s\|_{\gamma}+\|\bar{X}^m_s\|_{\gamma}) ds+C\int^t_0 |Z^{\varepsilon}_s|^2ds\\
&&+2\int_{0}^{t}\langle F^m(X^{m,\varepsilon}_s, Y^{m,\varepsilon}_s)-\bar{F}^m(X^{m,\varepsilon}_{s}), Z^{m,\varepsilon}_s\rangle ds\nonumber\\
\leq\!\!\!\!\!\!\!\!&&C\int_{0}^{t}|Z^{m,\varepsilon}_s|^2 (\|X^{m,\varepsilon}_s\|^2_{\gamma}+\|\bar{X}^m_s\|^2_{\gamma}+1) ds+2\left|\int_{0}^{t}\langle F^m(X^{m,\varepsilon}_s, Y^{m,\varepsilon}_s)-\bar{F}^m(X^{m,\varepsilon}_{s}), Z^{m,\varepsilon}_s\rangle ds\right|,
\end{eqnarray*}
where $\gamma\in (1/2,1)$. Then by Gronwall's inequality, it arrives
\begin{eqnarray*}
\sup_{t\in [0,T]}|Z^{m,\varepsilon}_t|^2
\leq\!\!\!\!\!\!\!\!&&2\sup_{t\in [0,T]}\left|\int_{0}^{t}\langle F(X^{m,\varepsilon}_s, Y^{m,\varepsilon}_s)-\bar{F}(X^{m,\varepsilon}_{s}), Z^{m,\varepsilon}_s\rangle ds\right|e^{C\int_{0}^{T}(\|X^{m,\varepsilon}_s\|^2_{\gamma}+\|\bar{X}^m_s\|^2_{\gamma}+1) ds}.
\end{eqnarray*}

For any $p\geq 4$, by taking $p/2$-th moment and expectation on both sides, we get
\begin{eqnarray*}
\EE\left[\sup_{t\in [0,T]}|Z^{m,\varepsilon}_t|^p\right]
\leq\!\!\!\!\!\!\!\!&&C_p\left[\EE\left(\sup_{t\in [0,T]}\left|\int_{0}^{t}\langle F(X^{m,\varepsilon}_s, Y^{m,\varepsilon}_s)-\bar{F}(X^{m,\varepsilon}_{s}), Z^{m,\varepsilon}_s\rangle ds\right|^p\right)\right]^{1/2}\nonumber\\
&&\quad\cdot\left[\EE e^{C_p\int_{0}^{T}(\|X^{m,\varepsilon}_s\|^2_{\gamma}+\|\bar{X}^m_s\|^2_{\gamma}+1) ds}\right]^{1/2}.
\end{eqnarray*}

Using interpolation inequality and Young's inequality,  there exists a small enough $\delta<\frac{\lambda_1-2 L_{F}}{8\|Q_1\|}$ such that
\begin{eqnarray*}
&&C_p\int_{0}^{T}\left(\|X^{m,\varepsilon}_s\|^2_{\gamma}+\|\bar{X}^m_s\|^2_{\gamma}\right)ds \\ \leq\!\!\!\!\!\!\!\!&&C_p\int_{0}^{T}\left(|X^{m,\varepsilon}_s|^{2(1-\gamma)}\|X^{m,\varepsilon}_s\|^{2\gamma}_{1}+|\bar{X}^{m}_s|^{2(1-\gamma)}\|\bar{X}^{m}_s\|^{2\gamma}_{1}\right)ds\\
\leq\!\!\!\!\!\!\!\!&&C_p\int_{0}^{T}\left(|X^{m,\varepsilon}_s|^{2}+|\bar{X}^{m}_s|^{2}\right)ds+\delta\int^T_0\left(\|X^{m,\varepsilon}_s\|^{2}_{1}+\|\bar{X}^{m}_s\|^{2}_{1}\right)ds\nonumber\\
\leq\!\!\!\!\!\!\!\!&&C_p T\left(\sup_{s\in [0,T]}|X^{m,\varepsilon}_s|^{2}+\sup_{s\in [0,T]}|\bar{X}^{m}_s|^{2}\right)+\delta\int^T_0\left(\|X^{m,\varepsilon}_s\|^{2}_{1}+\|\bar{X}^{m}_s\|^{2}_{1}\right)ds.
\end{eqnarray*}
Then  there exists a small enough $T>0$ satisfying $C_p T\leq \delta$, by \eref{A.1.1} and \eref{X11} we have
\begin{eqnarray*}
\EE\left[\sup_{t\in [0,T]}|Z^{m,\varepsilon}_t|^p\right]
\leq\!\!\!\!\!\!\!\!&&C_T(1+|x|) e^{\delta |x|^2}\left[\EE\sup_{t\in [0,T]}\left|\int_{0}^{t}\langle F(X^{m,\varepsilon}_s, Y^{m,\varepsilon}_s)-\bar{F}(X^{m,\varepsilon}_{s}), Z^{m,\varepsilon}_s\rangle ds\right|^p\right]^{1/2}.
\end{eqnarray*}

Recall the solution $\Phi_m(x,y)$ of Poisson equation \eref{ID2}, note that
$$
4\langle\Phi_m(X^{m,\vare}_{t},Y^{m,\vare}_{t}), Z^{m,\varepsilon}_t\rangle=|\Phi_m(X^{m,\vare}_{t},Y^{m,\vare}_{t})+ Z^{m,\varepsilon}_t|^2-|\Phi_m(X^{m,\vare}_{t},Y^{m,\vare}_{t})- Z^{m,\varepsilon}_t|^2.
$$
Then applying It\^o's formula we get
\begin{eqnarray}
\langle\Phi_m(X^{m,\vare}_{t},Y^{m,\vare}_{t}), Z^{m,\varepsilon}_t\rangle=\!\!\!\!\!\!\!\!&&\int^t_0 \left\langle\Phi_m(X^{m,\vare}_{s},Y^{m,\vare}_{s}), AZ^{m,\varepsilon}_s+\big[B^m(X^{m,\varepsilon}_s)-B^m(\bar{X}^m_{s})\big]\right.\nonumber\\
&&\quad\quad\quad\quad\quad\quad\quad\quad+\left.\big[F(X^{m,\varepsilon}_s, Y^{m,\varepsilon}_s)-\bar F(\bar{X}^m_{s})\big]\right\rangle ds\nonumber\\
&&+\int^t_0 \langle \mathscr{L}^m_{1}(Y^{m,\vare}_{s})\Phi_m(X^{m,\vare}_{s},Y^{m,\vare}_{s}), Z^{m,\varepsilon}_s \rangle ds\nonumber\\
&&+\frac{1}{\vare}\!\!\int^t_0 \!\!\langle \mathscr{L}^m_{2}(X_{s}^{m,\vare})\Phi_m(X^{m,\vare}_{s},Y^{m,\vare}_{s}), Z^{m,\varepsilon}_s\rangle ds\nonumber\\
&&+M^{m,\vare,1}_{t}+M^{m,\vare,2}_{t},\label{ID3}
\end{eqnarray}
where $\mathscr{L}^m_{1}(y)$ is the infinitesimal generator of the transition semigroup for the slow component $X^{\vare}$ with  fixed fast component $y$, i.e.,
\begin{eqnarray}
&&\mathscr{L}^m_{1}(y)\Phi_m(x,y):=D_x\Phi_m(x,y)\cdot\left[Ax+B^m(x)+F^m(x,y)\right]\nonumber\\
&&\quad\quad\quad\quad\quad\quad\quad\quad\quad\quad+\frac{1}{2}\sum^m_{k=1}\left[D_{xx}\Phi_m(x,y)\cdot\left( \sqrt{Q_1}e_k,\sqrt{Q_1}e_k\right)\right],\label{Lm_1}\\
&&M^{m,\vare,1}_{t}:=\int_0^t \langle D_x\Phi_m(X_{s}^{m,\vare},Y^{m,\vare}_{s})\cdot \sqrt{Q_1}d\bar{W}^{1,m}_s, Z^{m,\varepsilon}_s\rangle,\nonumber\label{M_1}\\
&&M^{m,\vare,2}_{t}:=\frac{1}{\sqrt{\vare}}\int_0^t  \langle D_y\Phi_m(X_{s}^{m,\vare},Y^{m,\vare}_{s})\cdot \sqrt{Q_2}d\bar{W}^{2,m}_s,\ Z^{m,\varepsilon}_s\rangle.\nonumber\label{M_2}
\end{eqnarray}
Then it follows
\begin{eqnarray}
&&\EE\left[\sup_{t\in [0,T]}|Z^{m,\varepsilon}_t|^p\right]\nonumber\\
\leq\!\!\!\!\!\!\!\!&&C_T (1+|x|^2)e^{2\delta|x|^2}\left[\EE\sup_{t\in[0,T]}\left|\int_{0}^{t}\langle\mathscr{L}^m_{2}(X_{s}^{m,\vare})\Phi_m(X_{s}^{m,\vare},Y^{m,\vare}_{s}), Z^{m,\varepsilon}_s\rangle ds\right|^p\right]^{1/2}\nonumber\\
\leq\!\!\!\!\!\!\!\!&&C_T(1+|x|) e^{\delta |x|^2}\vare^{p/2}\Bigg\{\left[\EE\sup_{t\in[0,T]}|\langle\Phi_m(X^{m,\vare}_{t},Y^{m,\vare}_{t}), Z^{m,\varepsilon}_t\rangle|^p\right]^{1/2}\Bigg\}\nonumber\\
&&+C_T(1+|x|) e^{\delta |x|^2}\vare^{p/2}\Bigg\{\Big[\EE \big|\int^T_0|\Phi_m(X^{m,\vare}_{s},Y^{m,\vare}_{s})|\cdot| AZ^{m,\varepsilon}_s+B^m(X^{m,\varepsilon}_s)-B^m(\bar{X}^m_{s})\nonumber\\
&&\quad\quad\quad\quad\quad\quad\quad\quad\quad\quad\quad\quad\quad+F^m(X^{m,\varepsilon}_s, Y^{m,\varepsilon}_s)-\bar F^m(\bar{X}^m_{s})|ds\big|^p\Big]^{1/2}\Bigg\}\nonumber\\
&&+C_T(1+|x|) e^{\delta |x|^2}\vare^{p/2}\Bigg\{\left[\EE\left|\int^T_0|Z^{m,\varepsilon}_s| |\mathscr{L}^m_{1}(Y^{m,\vare}_{s})\Phi_m(X_{s}^{m,\vare},Y^{m,\vare}_{s})|ds\right|^{p}\right]^{1/2}\Bigg\}\nonumber\\
&&+C_T(1+|x|) e^{\delta |x|^2}\vare^{p/2}\Bigg\{\left[\EE\left(\sup_{t\in[0,T]}|M^{m,\vare,1}_{t}|^p\right)\right]^{1/2}+\left[\EE\left(\sup_{t\in[0,T]}|M^{m,\vare,2}_{t}|^p\right)\right]^{1/2}\Bigg\}\nonumber\\
:=\!\!\!\!\!\!\!\!&&\sum^4_{k=1}\Gamma^{m,\vare}_k(T).\label{HF5.4}
\end{eqnarray}

For the term $\Gamma^{m,\vare}_1(T)$. By  \eref{AXvare}, \eref{A.1.0}, \eref{ASupYvare} and \eref{E0}, we have
\begin{eqnarray}
\Gamma^{m,\vare}_1(T)\leq\!\!\!\!\!\!\!\!&&C_T(1+|x|) e^{\delta |x|^2}\vare^{p/2}\Bigg[\EE\sup_{t\in[0,T]}|\Phi_m(X^{m,\vare}_{t},Y^{m,\vare}_{t})|^{2p}| Z^{m,\varepsilon}_t|^p\Bigg]^{1/4}\left\{\EE\left[\sup_{t\in[0,T]}| Z^{m,\varepsilon}_t|^p\right]\right\}^{1/4}\nonumber\\
\leq\!\!\!\!\!\!\!\!&& C_T(1+|x|^{\frac{4}{3}}) e^{\frac{4}{3}\delta |x|^2}\vare^{\frac{2p}{3}}\Bigg[\EE\left(\sup_{t\in[0,T]}|\Phi_m(X^{m,\vare}_{t},Y^{m,\vare}_{t})|^{4p}\right)\Bigg]^{1/6} \Bigg\{\EE\left[\sup_{t\in[0,T]}|Z^{m,\varepsilon}_t|^{2p}\right]\Bigg\}^{1/6}\nonumber\\
&&+\frac{1}{4}\EE\left[\sup_{t\in[0,T]}| Z^{m,\varepsilon}_t|^p\right]\nonumber\\
\leq\!\!\!\!\!\!\!\!&& C_T(1+|x|^{\frac{4}{3}}) e^{\frac{4}{3}\delta |x|^2}\vare^{\frac{2p}{3}}\Bigg[\EE\left(1+\sup_{t\in[0,T]}|Y^{m,\vare}_{t}|^{4p}\right)\Bigg]^{1/6} \nonumber\\
&&\cdot\Bigg\{\EE\left[\sup_{t\in[0,T]}\left(|X^{m,\varepsilon}_t|^{2p}+|\bar{X}^{m}_t|^{2p}\right)\right]\Bigg\}^{1/6}+\frac{1}{4}\EE\left[\sup_{t\in[0,T]}| Z^{m,\varepsilon}_t|^{p}\right]\nonumber\\
\leq\!\!\!\!\!\!\!\!&&C_T(1+|x|^{p}+|y|^{p}) e^{\frac{4\delta |x|^2}{3}}\vare^{p/2}+\frac{1}{4}\EE\left[\sup_{t\in[0,T]}| Z^{m,\varepsilon}_t|^p\right].\label{Gamma1}
\end{eqnarray}

For the term $\Gamma^{m,\vare}_2(T)$. By Minkowski's inequality, \eref{E0}, for any $\theta\in (0,1]$, $\gamma\in (1/2,1)$, there exists $d>1$ such that
\begin{eqnarray}
\Gamma^{m,\vare}_2(T)\leq\!\!\!\!\!\!\!\!&&C_{p,T}(1+|x|) e^{\delta |x|^2}\vare^{p/2}\Bigg\{\int^T_0\left[\EE|\Phi_m(X^{m,\vare}_{s},Y^{m,\vare}_{s})|^{2p}\right]^{\frac{1}{2p}}\cdot\Big[\EE\big(\|Z^{m,\varepsilon}_s\|^{2p}_2+|B^m(X^{m,\varepsilon}_s)|^{2p}\nonumber\\
&&+|B^m(\bar{X}^m_{s})|^{2p}+|F^m(X^{m,\varepsilon}_s, Y^{m,\varepsilon}_s)|^{2p}+|\bar F^m(\bar{X}^m_{s})|^{2p}\big)\Big]^{1/2p} ds\Bigg\}^{p/2}\nonumber\\
\leq\!\!\!\!\!\!\!\!&&C_{p,T}(1+|x|) e^{\delta |x|^2}\vare^{p/2}\Bigg\{\int^T_0\left[\EE(1+|Y^{m,\vare}_{s}|^{2p})\right]^{\frac{1}{2p}}\cdot\Big[\EE\big(1+\|X^{m,\varepsilon}_s\|^{2p}_2+\|\bar{X}^m_{s}\|^{2p}_2\nonumber\\
&&+\|X_{s}^{m,\vare}\|^{2p}_{1-\gamma}\|X_{s}^{m,\vare}\|^{2p}_{1/2+\gamma}+\|\bar{X}_{s}^{m}\|^{2p}_{1-\gamma}\|\bar{X}_{s}^{m}\|^{2p}_{1/2+\gamma}+|\bar{X}^m_{s}|^{2p}\big)\Big]^{\frac{1}{2p}} ds\Bigg\}^{p/2} \nonumber\\
\leq\!\!\!\!\!\!\!\!&&C_{p,T}(1+|x|^d+\|x\|^p_{\theta}+|y|^{p}) e^{\delta |x|^2}\vare^{p/2}\Bigg\{\int^T_0 \left(s^{-1+\frac{\theta}{2}}+s^{\frac{\alpha-\beta}{2}} \right)ds\Bigg\}^{p/2}\nonumber\\
\leq\!\!\!\!\!\!\!\!&&C_{p,T}(1+|x|^d+\|x\|^p_{\theta}+|y|^p)e^{2\delta |x|^2}\vare^{p/2}.\label{Gamma2}
\end{eqnarray}

For the term $\Gamma^{m,\vare}_3(T)$. Using \eref{E2}, \eref{E3} and similar as we did in estimating $\Gamma^{m,\vare}_2(T)$,  it is easy to prove that
\begin{eqnarray}
\Gamma^{m,\vare}_{3}(T)\leq\!\!\!\!\!\!\!\!&&C_T(1+|x|) e^{\delta |x|^2}\vare^{p/2}\Bigg\{\int^T_0\left[\EE|Z^{m,\varepsilon}_s|^{2p}\right]^{\frac{1}{2p}}\left[\EE|\mathscr{L}^m_{1}(Y^{m,\vare}_{s})\Phi_m(X_{s}^{m,\vare},Y^{m,\vare}_{s})|^{2p}\right]^{\frac{1}{2p}}ds\Bigg\}^{p/2}\nonumber\\
\leq\!\!\!\!\!\!\!\!&&C_T(1+|x|^p) e^{\delta |x|^2}\vare^{p/2}\Bigg\{\int^T_0 \Bigg[\EE\Big|D_x\Phi_m(X_{s}^{m,\vare},Y^{m,\vare}_{s})\cdot \big[AX_{s}^{m,\vare}+B(X_{s}^{m,\vare}) \nonumber\\
&&+F^m(X_{s}^{m,\vare},Y^{m,\vare}_{s})\big]+\frac{1}{2}\sum_{k=1}^m \left[D_{xx}\Phi_m(X_{s}^{m,\vare},Y^{m,\vare}_{s})\cdot(\sqrt{Q_1}e_k,\sqrt{Q_1}e_k)\right]\Big|^{2p}ds\Bigg]^{\frac{1}{2p}}ds\Bigg\}^{p/2}\nonumber\\
\leq\!\!\!\!\!\!\!\!&&C_{p,T}(1+|x|^d+\|x\|^p_{\theta}+|y|^p)e^{\delta |x|^2}\vare^{p/2}. \label{Gamma3}
\end{eqnarray}

By  Burkholder-Davis-Gundy's inequality, \eref{E1} and \eref{E2}, we have
\begin{eqnarray}
\Gamma^{m,\vare}_4(T)
\leq\!\!\!\!\!\!\!\!&&C_T (1+|x|)e^{\delta|x|^2}\vare^{p/2}\left\{\EE\left[\int^T_0|Z^{m,\varepsilon}_s|^2\| D_x\Phi_m(X_{s}^{m,\vare},Y^{m,\vare}_{s})\|^2\text{Tr}Q_1ds\right]^{p/2}\right\}^{1/2}\nonumber\\
&&+C_T (1+|x|)e^{\delta|x|^2}\vare^{p/4}\left\{\EE\left[\int^T_0|Z^{m,\varepsilon}_s|^2\| D_y\Phi_m(X_{s}^{m,\vare},Y^{m,\vare}_{s})\|^2\text{Tr}Q_2ds\right]^{p/2}\right\}^{1/2}\nonumber\\
\leq\!\!\!\!\!\!\!\!&&C_T (1+|x|)e^{\delta|x|^2}\vare^{p/2}\left\{\int^T_0\left(\EE|Z^{m,\varepsilon}_s|^{2p}\right)^{1/2}\left(\EE\| D_x\Phi_m(X_{s}^{m,\vare},Y^{m,\vare}_{s})\|^{2p}\right)^{1/2}ds\right\}^{1/2}\nonumber\\
&&+\frac{1}{4}\EE\left[\sup_{t\in [0,T]}|Z^{m,\varepsilon}_t|^{p}\right]+C_T (1+|x|^2)e^{2\delta|x|^2}\vare^{p/2}\nonumber\\
\leq\!\!\!\!\!\!\!\!&& C_{p,T}(1+|x|^{p}+|y|^{p})e^{4\delta|x|^2}\vare^{p/2}+\frac{1}{4}\EE\left[\sup_{t\in [0,T]}|Z^{m,\varepsilon}_t|^p\right].\label{Gamma4}
\end{eqnarray}

Hence, by \eref{HF5.4}-\eref{Gamma4}, we final have for any small enough $T>0$,
\begin{eqnarray}
\mathbb{E}\left(\sup_{t\in[0,T]}|Z^{m,\varepsilon}_t|^p\right)\leq C_{p,T}(1+|x|^d+\|x\|^p_{\theta}+|y|^p)e^{2\delta |x|^2}\vare.\label{SmallT}
\end{eqnarray}

Note that $(X^{m,\varepsilon}(x_1), Y^{m,\varepsilon}(y), \bar{X}^{m}(x_2)$ is a time homogeneous Markov process, which start from the initial value $(x_1,y,x_2)$. Assume that \eref{SmallT} holds for a small $T>0$. The remaining is sufficient to show \eref{SmallT} holds for $2T$.

In face, by a minor revision in the discussion above, it is easy to see that for any $p\geq 2$, there exist $d>1, C_{p,T}$ and small enough $\delta>0$ such that
\begin{eqnarray}
&&\EE\left[\sup_{t\in [0,T]}|X^{m,\varepsilon}(x_1)-\bar{X}^{m}_t(x_2)|^p\right]\nonumber\\
\leq\!\!\!\!\!\!\!\!&&C_{p,T}(1+|x_1|^d+|x_2|^d+\|x_1\|^{p}_{\theta}+\|x_2\|^{p}_{\theta}+|y|^{p}) e^{\delta (|x_1|^2+|x_2|^2)}\vare^{p/2}\nonumber\\
&&+C_T|x_1-x_2|^p(1+|x_1|^2+|x_2|^2)e^{\delta(|x_1|^2+|x_2|^2)}.\label{DI}
\end{eqnarray}
By Markov property, \eref{DI} and a prior estimates of $(X^{m,\vare}, Y^{m,\vare})$ and $\bar{X}^m$, we final get
\begin{eqnarray*}
\EE\left[\sup_{t\in [T,2T]}|X^{m,\varepsilon}_t-\bar{X}^{m}_t|^p\right]\leq\!\!\!\!\!\!\!\!&&\EE\left\{\EE\left[\sup_{t\in [T,2T]}|X^{m,\varepsilon}-\bar{X}^{m}_t|^p|\mathscr{F}^m_T\right]\right\}\\
\leq\!\!\!\!\!\!\!\!&&\EE\left\{\EE\left[\sup_{t\in [0,T]}|X^{m,\varepsilon}(x_1)-\bar{X}^{m}_t(x_2)|^p\right]\Bigg|_{\{x_1=X^{m,\varepsilon}_T, y=Y^{m,\varepsilon}_T, x_2=\bar{X}^{m}_T\}}\right\}\\
\leq\!\!\!\!\!\!\!\!&&C_{p,T}(1+|x|^d+\|x\|^p_{\theta}+|y|^p)e^{2\delta |x|^2}\vare^{p/2}\\
&&+\left[\mathbb{E}|X^{m,\varepsilon}_T-\bar{X}^{m}_T|^{2p}\right]^{1/2}\left[\EE(1+|X^{m,\varepsilon}_T|^4+|\bar{X}^{m}_T|^4)e^{2\delta(|X^{m,\varepsilon}_T|^2+|\bar{X}^{m}_T|^2)})\right]^{1/2}\\
\leq\!\!\!\!\!\!\!\!&&C_{p,T}(1+|x|^d+\|x\|^p_{\theta}+|y|^p)e^{2\delta |x|^2}\vare^{p/2},
\end{eqnarray*}
where $\mathscr{F}^m_T=\sigma\{(\bar{W}^{1,m}_t,\bar{W}^{2,m}_t),t\leq T\}$. The proof is complete.

\section{The proof of weak convergence}
 In this section, we shall prove Theorem \ref{main result 2}. To do this, we will give the idea of the proof in subsection 5.1 firstly. Then we study the regularity of solution of the Kolmogorov equation and Poisson equation in subsection 5.2. Finally, we give the detailed proofs of Theorem \ref{main result 2}.

\subsection{Idea of the proof of weak convergence} In order to show the key idea to readers for convenience, we here give a direct-viewing about how to use these techniques.

\vspace{0.1cm}
$\textbf{Step 1 (Finite dimensional approximation)}$
For a test function $\phi \in C_{b}^{3}(H)$, we have for any $t\geq 0$,
\begin{eqnarray} \label{ephix}
\left|\mathbb{E}\phi\left(X^{\vare}_t\right)
-\EE\phi(\bar{X}_t)\right|
\leq\!\!\!\!\!\!\!\!&&
\left|\mathbb{E}\phi\left(X^{\vare}_t\right)
-\mathbb{E}\phi\left(X^{m,\vare}_t\right)\right|+\left|\EE\phi (\bar{X}^m_t)-\EE\phi(\bar{X}_t)\right| \nonumber\\
\!\!\!\!\!\!\!\!&&+\left|\mathbb{E}\phi\left(X^{m,\vare}_t\right)-\EE\phi (\bar{X}^m_t)\right|.
\end{eqnarray}
By \eref{FA1} and \eref{FA2}, it is easy to see that for any $\vare>0$,
\begin{eqnarray*}
&&\lim_{m\rightarrow\infty}\sup_{t\in [0, T]}\left|\mathbb{E}\phi\left(X^{\vare}_t\right)
-\mathbb{E}\phi\left(X^{m,\vare}_t\right)\right|=0,\\
&&\lim_{m\rightarrow\infty}\sup_{t\in [0, T]}\left|\EE\phi (\bar{X}^m_t)-\EE\phi(\bar{X}_t)\right|=0.
\end{eqnarray*}
Then it is sufficient to show that there exists a  constant $C>0$ independent of $m$ such that
\begin{eqnarray}
\sup_{t\in [0, T]}\left|\mathbb{E}\phi\left(X^{m,\vare}_t\right)-\EE\phi (\bar{X}^m_t)\right|\leq C\vare. \label{MFE}
\end{eqnarray}

\vspace{0.1cm}
$\textbf{Step 2 (Kolmogorov equation)}$
In order to prove \eref{MFE}, we introduce the following Kolmogorov equation:
\begin{equation}\left\{\begin{array}{l}\label{KE}
\displaystyle
\partial_t u_m(t,x)=\bar{\mathscr{L}}^m_1 u_m(t,x),\quad t\in[0, T], \\
u_m(0, x)=\phi(x),\quad x\in H_m,
\end{array}\right.
\end{equation}
where $\phi\in C^{3}_b(H)$ and $\bar{\mathscr{L}}^m_1$ is the infinitesimal generator of the transition semigroup of the finite dimensional averaged equation \eref{Ga 1.3}, which is given by
\begin{eqnarray*}
\bar{\mathscr{L}}^m_{1}u_m(t,x):=\!\!\!\!\!\!\!\!&&D_x u_m(t,x)\cdot \left[Ax+B^m(x)+\bar{F}^m(x)\right]\\
&&+\frac{1}{2}\sum^m_{k=1}\left[D_{xx}u_m(t,x)\cdot(\sqrt{Q_1}e_k, \sqrt{Q_1}e_k) \right],\quad x\in H_m, t\in[0,T].
\end{eqnarray*}

For any fixed $t>0$, denote $\tilde{u}^t_m(s,x):=u_m(t-s,x)$, $s\in [0,t]$. If the solution $u_m(t,x)$ of Kolmogorov equation \eref{KE} is enough regularity (see Lemma \ref{Lemma3.6} below), applying It\^{o}'s formula, we get
\begin{eqnarray}
\tilde{u}^t_m(t, X^{m,\vare}_t)=\!\!\!\!\!\!\!\!&&\tilde{u}^t_m(0,x)+\int^t_0 \partial_s \tilde{u}^t_m(s, X^{m,\vare}_s )ds+\int^t_0 \mathscr{L}^m_{1}(Y^{m,\vare}_s)\tilde{u}^t_m(s, X^{m,\vare}_s)ds+\tilde{M}^m_t,\label{F5.4}
\end{eqnarray}
where
\begin{eqnarray*}
&&\mathscr{L}^m_{1}(y)\tilde{u}^t_m(s, x):=D_x\tilde{u}^t_m(s, x)\cdot\left[Ax+B^m(x)+F^m(x,y)\right]\\
&&\quad\quad\quad\quad\quad\quad\quad\quad\quad\quad+\frac{1}{2}\sum^m_{k=1}\left[D_{xx}\tilde{u}^t_m(s, x)\cdot\left( \sqrt{Q_1}e_k,\sqrt{Q_1}e_k\right)\right],\\
&&\tilde{M}^m_t:=\int^t_0\langle D_x\tilde{u}^t_m(s,X_s^{m,\varepsilon}),\sqrt{Q_1}d\bar{W}_s^{1,m}\rangle.
\end{eqnarray*}

Note that \eref{KE} admits a unique solution $u_m(t, x)=\EE\phi(\bar{X}^m_t(x))$, thus
\begin{eqnarray*}
&&\tilde{u}^t_m(t, X^{m,\vare}_t)=\phi(X^{m,\vare}_t),\quad \tilde{u}^t_m(0, x)=\EE\phi(\bar{X}^m_t(x)), \\
&&\partial_s \tilde{u}^t_m(s, X^{m,\vare}_s )=-\bar{\mathscr{L}}^m_1 \tilde{u}^t_m(s, X^{m,\vare}_s),
\end{eqnarray*}
which combine with \eref{F5.4}, we get
\begin{eqnarray}
\left|\EE\phi(X^{m,\vare}_{t})-\EE\phi(\bar{X}^m_{t})\right|=\!\!\!\!\!\!\!\!&&\left|\EE\int^t_0 \partial_s \tilde{u}^t_m(s, X^{m,\vare}_s )ds+\EE\int^t_0\mathscr{L}^m_{1}(Y^{m,\vare}_s)\tilde{u}^t_m(s, X^{m,\vare}_s)ds\right|\nonumber\\
=\!\!\!\!\!\!\!\!&&\left|\EE\int^t_0 -\bar{\mathscr{L}}^m_1 \tilde{u}^t_m(s, X^{m,\vare}_s )ds+\EE\int^t_0 \mathscr{L}^m_{1}(Y^{m,\vare}_s)\tilde{u}^t_m(s, X^{m,\vare}_s)ds\right|\nonumber\\
=\!\!\!\!\!\!\!\!&&\left|\EE\int^t_0 \langle F^m(X^{m,\vare}_s,Y^{m,\vare}_s)-\bar{F}^m(X^{m,\vare}_s), D_x \tilde{u}^t_m(s, X^{m,\vare}_s )\rangle ds \right|.\label{F5.11}
\end{eqnarray}

\vspace{0.2cm}
$\textbf{Step 3 (Poisson equation)}$ In order to estimate \eref{F5.11}, we consider the following Poisson equation in $H_m$:
\begin{eqnarray}
-\mathscr{L}^m_{2}(x)\Phi^t_m(s,x,y)=\langle F^m(x,y)-\bar{F}^m(x), D_x \tilde{u}^t_m(s, x)\rangle ,\quad s\in [0,t], x,y\in H_m,\label{WPE}
\end{eqnarray}
where $\mathscr{L}^m_{2}(x)$ is defined as follows:
\begin{eqnarray*}
\mathscr{L}^m_{2}(x)\Phi^t_m(s,x,y):=\!\!\!\!\!\!\!\!&&D_y\Phi^t_m(s,x,y)\cdot [Ay+G^m(x,y)]\nonumber\\
&&+\frac{1}{2}\sum^{m}_{k=1}\left[D_{yy}\Phi^t_m(s,x,y)\cdot(\sqrt{Q_2} e_k,  \sqrt{Q_2} e_k)\right].
\end{eqnarray*}

If $\Phi^t_m$ is a sufficiently regular solution of Poisson equation \eref{WPE} (see Proposition \ref{P3.6} below ), applying It\^o's formula and taking expectation, we get for any $t\in [0,T]$,
\begin{eqnarray*}
\EE\Phi^t_m(t, X_{t}^{m,\vare},Y^{m,\vare}_{t})=\!\!\!\!\!\!\!\!&& \Phi^t_m(0, x,y)+\EE\int^t_0 \partial_s \Phi^t_m(s, X_{s}^{m,\vare},Y^{m,\vare}_{s})ds\\
&&\!\!\!\!\!\!\!\!\!\!\!\!\!\!\!\!\!\!\!\!\!\!\!\!\!\!+\EE\int^t_0\mathscr{L}^m_{1}(Y^{m,\vare}_{s})\Phi^t_m(s, X_{s}^{m,\vare},Y^{m,\vare}_{s})ds+\frac{1}{\vare}\EE\int^t_0 \mathscr{L}^m_{2}(X_{s}^{m,\vare})\Phi^t_m(s, X_{s}^{m,\vare},Y^{m,\vare}_{s})ds,
\end{eqnarray*}
which implies
\begin{eqnarray}
&&-\EE\int^t_0 \mathscr{L}^m_{2}(X_{s}^{m,\vare})\Phi^t_m(s, X_{s}^{m,\vare},Y^{m,\vare}_{s})ds\nonumber\\
=\!\!\!\!\!\!\!\!&&\vare\big[\Phi^t_m(0, x,y)-\EE\Phi^t_m(t, X_{t}^{m,\vare},Y^{m,\vare}_{t})+\EE\int^t_0 \partial_s \Phi^t_m(s, X_{s}^{m,\vare},Y^{m,\vare}_{s})ds\nonumber\\
&&\quad\quad+\EE\int^t_0\mathscr{L}^m_{1}(Y^{m,\vare}_{s})\Phi^t_m(s, X_{s}^{m,\vare},Y^{m,\vare}_{s})ds\big].\label{F3.39}
\end{eqnarray}

Combining  \eref{F5.11}, \eref{WPE} and \eref{F3.39}, we get
\begin{eqnarray*}
&&\sup_{t\in[0,T]}\left|\EE\phi(X^{m,\vare}_{t})-\EE\phi(\bar{X}^m_{t})\right|=\sup_{t\in[0,T]}\left|\EE\int^t_0 \mathscr{L}^m_{2}(X_{s}^{m,\vare})\Phi^t_m(s, X_{s}^{m,\vare},Y^{m,\vare}_{s})ds\right|\\
\leq\!\!\!\!\!\!\!\!&&\vare\Bigg[\sup_{t\in [0,T]}|\Phi^t_m(0, x,y)|+\sup_{t\in[0,T]}\left|\EE\Phi^t_m(t, X_{t}^{m,\vare},Y^{m,\vare}_{t})\right|+\sup_{t\in [0,T]}\EE\int^t_0 \left|\partial_s \Phi^t_m(s, X_{s}^{m,\vare},Y^{m,\vare}_{s})\right|ds\nonumber\\
&&+\sup_{t\in [0,T]}\EE\int^t_0\left|\mathscr{L}^m_{1}(Y^{m,\vare}_{s})\Phi^t_m(s, X_{s}^{m,\vare},Y^{m,\vare}_{s})\right|ds\Bigg].
\end{eqnarray*}

Hence, the remaining work is to estimate all the terms above, which highly depends on the regularity of the solution $\Phi^t_m(x,y)$ of the Poisson equation \eref{WPE}.

\subsection{The regularity of solution of Kolmogorov equation and Poisson equation}

The directional derivative of $\bar{X}^m_t$ with respect to $x$ in the direction $h$ is denoted by $\eta^{h,m}_t(x)$, which satisfies the following equation
\begin{equation}\left\{\begin{array}{l} \label{Equa derivative}
\displaystyle
\frac{d\eta^{h,m}_t(x)}{dt}
=A\eta^{h,m}_t(x)+D\bar{F}^m(\bar{X}^m_t)\cdot\eta^{h,m}_t(x)
+D_{\xi}\left[\bar{X}^m_t\eta^{h,m}_t(x)\right]\\
\eta^{h,m}_0(x)=h\in H_m.
\end{array}\right.
\end{equation}

\begin{lemma} \label{eta}
The following two estimates hold.

(1) For any $T>0$, $h\in H_m$, there exist constants $C, C_T>0$ such that
\begin{align} \label{LemDeriEtah 01}
| \eta^{h,m}_t(x) |^2+\int^t_0\|\eta^{h,m}_s(x)\|_{1}^{2}ds\leq C_Te^{C \int^t_0\|\bar{X}^m_s\|^{4/3}_1ds}|h|^{2},\quad t\in [0,T].
\end{align}
Moreover, for any $p\geq 1$ and $T>0$, there exists small enough $\delta>0$ such that
\begin{eqnarray}
\sup_{t\in [0,T]}\left(\EE| \eta^{h,m}_t(x) |^{2p}\right)^{1/p}+\EE\int^T_0\|\eta^{h,m}_s(x)\|_{1}^{2}ds\leq C_T e^{\delta|x|^2}|h|^{2}.\label{ELemDeriEtah 01}
\end{eqnarray}

(2) For any $T>0$, $p>1$, $\gamma\in(1,3/2)$ and $h\in H^{\theta}$ with $\theta\in [0,\gamma]$, there exist a constant $C_{T}>0$ such that for any $t\in (0,T]$ and small enough $\delta>0$,
\begin{align} \label{LemDeriEtah 02}
\left[\EE\|\eta^{h,m}_t(x)\|^p_{\gamma}\right]^{1/p}
\leq C_T e^{\delta |x|^2}t^{-\frac{\gamma-\theta}{2}}\|h\|_{\theta}.
\end{align}
\end{lemma}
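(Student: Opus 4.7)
The plan is to treat the two parts separately: first derive a pathwise energy estimate for $\eta^{h,m}_t$ in $L^2$ via integration by parts and Gronwall, then exploit the mild formulation to bootstrap this into $H^{\gamma}$ regularity following the scheme of Lemma~\ref{BarXgamma}(3).

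For part (1), I would take the $L^2$ inner product of \eqref{Equa derivative} with $\eta^{h,m}_t$. The Laplacian contributes $-\|\eta^{h,m}_t\|_1^2$, the term involving $D\bar{F}^m$ is bounded by $C|\eta^{h,m}_t|^2$ using \eqref{D1Fm}, and the Burgers-type term satisfies
\begin{equation*}
\langle D_\xi[\bar{X}^m_t \eta^{h,m}_t], \eta^{h,m}_t\rangle = -\int_0^1 \bar{X}^m_t \eta^{h,m}_t \partial_\xi \eta^{h,m}_t \,d\xi = \tfrac12\int_0^1 \partial_\xi \bar{X}^m_t \,(\eta^{h,m}_t)^2 \,d\xi
\end{equation*}
after integration by parts (using the Dirichlet boundary conditions). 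Bounding this by $\|\bar{X}^m_t\|_1 \|\eta^{h,m}_t\|_{L^4}^2$, applying the one-dimensional Gagliardo--Nirenberg inequality $\|\eta\|_{L^4}^2 \leq C|\eta|^{3/2}\|\eta\|_1^{1/2}$, and absorbing the $H^1$-factor via Young's inequality with exponents $(4,4/3)$ yields
\begin{equation*}
\frac{d}{dt}|\eta^{h,m}_t|^2 + \|\eta^{h,m}_t\|_1^2 \leq C(1+\|\bar{X}^m_t\|_1^{4/3})|\eta^{h,m}_t|^2.
\end{equation*}
Gronwall's inequality then gives \eqref{LemDeriEtah 01}. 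For the moment bound \eqref{ELemDeriEtah 01}, raise to the $p$-th power and observe that for any small $\delta>0$, Young's inequality gives $Cp\|\bar{X}^m_s\|_1^{4/3} \leq C_p + \delta\|\bar{X}^m_s\|_1^2$, after which the exponential moment estimate \eqref{A.1.1} (with $\delta$ chosen strictly below the threshold $(\lambda_1-2L_F)/(4\|Q_1\|)$) controls the expectation.

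For part (2), I would pass to the mild form
\begin{equation*}
\eta^{h,m}_t = e^{tA} h + \int_0^t e^{(t-s)A} D\bar{F}^m(\bar{X}^m_s)\cdot\eta^{h,m}_s \,ds + \int_0^t e^{(t-s)A}\bigl[B^m(\eta^{h,m}_s,\bar{X}^m_s) + B^m(\bar{X}^m_s,\eta^{h,m}_s)\bigr] ds.
\end{equation*}
The linear semigroup term contributes $Ct^{-(\gamma-\theta)/2}\|h\|_\theta$ by \eqref{P3}, and the $\bar{F}^m$ term is handled by \eqref{D1Fm} together with $(t-s)^{-\gamma/2}$ singularities that are integrable since $\gamma<3/2$. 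The bilinear term is treated exactly as in the derivation of \eqref{HolderNorm B}: choose small $\alpha_1,\alpha_2>0$ and $\alpha_3\in[1/2,2-\gamma)$ with $\alpha_1+\alpha_2+\alpha_3>1/2$ and $1+\alpha_1+\alpha_2\in(1,\gamma)$, invoke the trilinear estimate from Lemma~\ref{Property B1} to obtain $\|B^m(\bar{X}^m_s,\eta^{h,m}_s)\|_{-\alpha_3}\leq C\|\bar{X}^m_s\|_{\alpha_1}\|\eta^{h,m}_s\|_{1+\alpha_2}$, and interpolate both factors between $L^2$ and $H^\gamma$. Combining this with \eqref{ELemDeriEtah 01} and the a priori bound \eqref{A.1.2} on $\|\bar{X}^m_s\|_\gamma$, I obtain an integral inequality of the form
\begin{equation*}
\bigl[\EE\|\eta^{h,m}_t\|_\gamma^p\bigr]^{1/p} \leq C_T e^{\delta|x|^2}t^{-(\gamma-\theta)/2}\|h\|_\theta + \int_0^t (t-s)^{-(\alpha_3+\gamma)/2}\bigl[\EE\|\eta^{h,m}_s\|_\gamma^p\bigr]^{1/p} ds,
\end{equation*}
which closes via the singular Gronwall lemma (Lemma~\ref{Gronwall 2}).

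The main technical obstacle is the Burgers nonlinearity in \eqref{Equa derivative}: it prevents a purely linear treatment of $\eta^{h,m}_t$, and the coupling to $\bar{X}^m_s$ forces us to work with the random exponential weight $\exp(C\int\|\bar{X}^m_s\|_1^{4/3}ds)$. Getting this weight into a form controllable by \eqref{A.1.1}—in particular keeping the coefficient of $\|\bar{X}^m_s\|_1^2$ strictly below $(\lambda_1-2L_F)/(4\|Q_1\|)$ after raising to the $p$-th power—is the delicate point, and the exponent $4/3$ (rather than $2$) is crucial for this.
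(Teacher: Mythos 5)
Your proposal is correct and follows essentially the same route as the paper: an energy estimate with the integration-by-parts/Gagliardo--Nirenberg/Young trick producing the $\|\bar{X}^m_s\|_1^{4/3}$ weight for part (1) (which the paper delegates to \cite[Proposition 3.2]{DaD}), the observation that $C\|\bar X^m_s\|_1^{4/3}\leq C+\delta\|\bar X^m_s\|_1^2$ combined with \eqref{A.1.1} for the moment bound, and the mild formulation with a negative-order Sobolev estimate of the bilinear term, interpolation, and the singular Gronwall lemma (Lemma~\ref{Gronwall 2}) for part (2). The only cosmetic difference is that the paper estimates $\|B^m(\bar X^m_s,\eta^{h,m}_s)+B^m(\eta^{h,m}_s,\bar X^m_s)\|_{-\delta'}$ by $\|\eta^{h,m}_s\|_1|\bar X^m_s|+|\eta^{h,m}_s|\|\bar X^m_s\|_1$ and interpolates only the $\eta$-factor through $H^\gamma$, whereas you interpolate both factors as in \eqref{HolderNorm B}; both close the same integral inequality.
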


\begin{proof}
The proof \eref{LemDeriEtah 01} follows almost the same steps in \cite[Proposition 3.2]{DaD}. Note that by Young's inequality, it follows
$$
\|x\|^{4/3}_1\leq C+\delta\|x\|^2_1,
$$
where $\delta>0$ can be chosen small enough. Then \eref{ELemDeriEtah 01} is a consequence of \eref{LemDeriEtah 01} and \eref{A.1.1}. Hence the proofs of \eref{LemDeriEtah 01} and \eref{ELemDeriEtah 01} are omitted here. We only prove \eref{LemDeriEtah 02}.

Note that
\begin{align}\label{mild eta}
\eta^{h,m}_t(x)= e^{tA}h+\int_{0}^{t}e^{(t-s)A}D\bar{F}^m(\bar{X}^m_s)\cdot\eta^{h,m}_s(x)ds
+\int_{0}^{t}e^{(t-s)A}D_{\xi}\left[\bar{X}^m_s\eta^{h,m}_s(x)\right]ds.
\end{align}
Then for any $t\in (0,T]$, $\gamma\in(1,\frac{3}{2})$, $\delta'\in (1/2,2-\gamma)$ and $h\in H^{\theta}$ with $\theta\in [0,\gamma]$, we have
\begin{eqnarray}
\|\eta^{h,m}_t(x)\|_{\gamma}\leq\!\!\!\!\!\!\!\!&&
Ct^{-\frac{\gamma-\theta}{2}}\|h\|_{\theta}
+C\int_{0}^{t}(t-s)^{-\frac{\gamma}{2}}|\eta^{h,m}_s(x)|ds \nonumber\\
\!\!\!\!\!\!\!\!&&
+C\int_{0}^{t}(t-s)^{-\frac{\gamma+\delta'}{2}}\left\|D_{\xi}\left[\bar{X}^m_s\eta^{h,m}_s(x)\right]\right\|_{-\delta'}ds \nonumber\\
\leq\!\!\!\!\!\!\!\!&&
Ct^{-\frac{\gamma-\theta}{2}}\|h\|_{\theta}
+C\int_{0}^{t}(t-s)^{-\frac{\gamma}{2}}|\eta^{h,m}_s(x)|ds  \nonumber\\
\!\!\!\!\!\!\!\!&&
+C\int_{0}^{t}(t-s)^{-\frac{\gamma+\delta'}{2}}\left\|B(\bar{X}^m_s,\eta^{h,m}_s(x))+B(\eta^{h,m}_s(x),\bar{X}^m_s)\right\|_{-\delta'}ds \nonumber\\
\leq\!\!\!\!\!\!\!\!&&
Ct^{-\frac{\gamma-\theta}{2}}\|h\|_{\theta}+C\int_{0}^{t}(t-s)^{-\frac{\gamma}{2}}|\eta^{h,m}_s(x)|ds \nonumber\\
\!\!\!\!\!\!\!\!&&
+C\int_{0}^{t}(t-s)^{-\frac{\gamma+\delta'}{2}}\left[\|\eta^{h,m}_s(x)\|_{1} |\bar{X}^m_s|+|\eta^{h,m}_s(x)| \|\bar{X}^m_s\|_1\right]ds \nonumber\\
\leq\!\!\!\!\!\!\!\!&&
Ct^{-\frac{\gamma-\theta}{2}}\|h\|_{\theta}+C\int_{0}^{t}(t-s)^{-\frac{\gamma}{2}}|\eta^{h,m}_s(x)|ds \nonumber\\
\!\!\!\!\!\!\!\!&&
+C\int_{0}^{t}(t-s)^{-\frac{\gamma+\delta'}{2}}\left[|\eta^{h,m}_s(x)|^{\frac{\gamma-1}{\gamma}} \|\eta^{h,m}_s(x)\|^{\frac{1}{\gamma}}_{\gamma} |\bar{X}^m_s|+|\eta^{h,m}_s(x)| \|\bar{X}^m_s\|_1\right]ds. \nonumber
\end{eqnarray}

By Minkowski's inequality, \eref{ELemDeriEtah 01}, it follows for any $p\geq 1$,
\begin{eqnarray*}
\left[\EE\|\eta^{h,m}_t(x)\|^p_{\gamma}\right]^{1/p}
\leq\!\!\!\!\!\!\!\!&&
Ct^{-\frac{\gamma-\theta}{2}}\|h\|_{\theta}
+C\int_{0}^{t}(t-s)^{-\frac{\gamma}{2}}\left[\EE|\eta^{h,m}_s(x)|^p\right]^{1/p}ds\nonumber\\
\!\!\!\!\!\!\!\!&&
+C\int_{0}^{t}(t-s)^{-\frac{\gamma+\delta'}{2}}\left[\EE\left[|\eta^{h,m}_s(x)|^{\frac{p(\gamma-1)}{\gamma}} \|\eta^{h,m}_s(x)\|^{\frac{p}{\gamma}}_{\gamma} |\bar{X}^m_s|^p\right]\right]^{1/p}ds\nonumber\\
\!\!\!\!\!\!\!\!&& +C\int_{0}^{t}(t-s)^{-\frac{\gamma+\delta'}{2}}\left[\EE |\eta^{h,m}_s(x)|^p \|\bar{X}^m_s\|^p_1\right]^{1/p}ds \\
\leq\!\!\!\!\!\!\!\!&&
Ct^{-\frac{\gamma-\theta}{2}}\|h\|_{\theta}
+C\int_{0}^{t}(t-s)^{-\frac{\gamma}{2}}\left[\EE|\eta^{h,m}_s(x)|^p\right]^{1/p}ds  \nonumber\\
\!\!\!\!\!\!\!\!&&
+C\int_{0}^{t}(t-s)^{-\frac{\gamma+\delta'}{2}}\left[\EE\left[|\eta^{h,m}_s(x)|^{p} |\bar{X}^m_s|^{\frac{p\gamma}{\gamma-1}}+\|\eta^{h,m}_s(x)\|^{p}_{\gamma}\right]\right]^{1/p}ds\\
\!\!\!\!\!\!\!\!&& +C\int_{0}^{t}(t-s)^{-\frac{\gamma+\delta'}{2}}\left[\EE |\eta^{h,m}_s(x)|^{2p}\right]^{\frac{1}{2p}}\left[\EE \|\bar{X}^m_s\|^{2p}_1\right]^{\frac{1}{2p}}ds \\
\leq\!\!\!\!\!\!\!\!&&
C_Te^{\delta |x|^2}t^{-\frac{\gamma-\theta}{2}}\|h\|_{\theta}
+C_T\int_{0}^{t}(t-s)^{-\frac{\gamma}{2}}\left[\EE\|\eta^{h,m}_s(x)\|^p_{\gamma}\right]^{1/p}ds.
\end{eqnarray*}
where $\delta$ is a small enough positive constant.

Consequently, Lemma \ref{Gronwall 2} implies that \eqref{LemDeriEtah 02} holds. The proof is complete.
\end{proof}

\begin{remark}
By Lemma \ref{eta} and interpolation inequality, we deduce that for
any $T>0$, $t\in (0, T]$, $p\geq 1$, $\gamma_1\in (0, 1]$, $\gamma\in (1,3/2)$ and $\theta\in [0,1]$,
there exists a constant $C_{T}>0$ and small enough $\delta>0$ such that
\begin{align} \label{eta1}
\left[\EE\|\eta^{h,m}_t(x)\|^p_{\gamma_1}\right]^{1/p}\leq
C\left[\EE\left(|\eta^{h,m}_t(x)|^{\frac{(\gamma-\gamma_1)p}{\gamma}}
\|\eta^{h,m}_t(x)\|^{\frac{\gamma_1 p}{\gamma}}_{\gamma}\right)\right]^{1/p}
\leq C_{T}e^{\delta|x|^2}t^{-\frac{(\gamma-\theta) \gamma_1 }{2\gamma}}\|h\|_{\theta}.
\end{align}
\end{remark}

\begin{lemma}\label{LemmaA.5}
For any $\gamma\in (1,3/2)$, $\theta\in (0,\gamma-1]$, and $0<s<t\leq T$,
there exists a constant $C_{T}>0$ such that for small enough $\delta>0$,
\begin{align}
\left[\EE\|\eta^{h,m}_t(x)-\eta^{h,m}_s(x)\|^2_{1}\right]^{1/2}
\leq
C_T(t-s)^{\frac{\gamma-1}{2}}s^{-\frac{\gamma-\theta}{2}}e^{\delta |x|^{2}}\|h\|_{\theta} . \label{F5.14}
\end{align}
\end{lemma}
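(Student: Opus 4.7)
The plan is to mirror the proof of Lemma \ref{Lemma 3.3}, now applied to the variation process $\eta^{h,m}_t(x)$, which solves the linearized equation \eqref{Equa derivative} and admits the mild formulation \eqref{mild eta}. For $0<s<t\leq T$, I would decompose
\begin{align*}
\eta^{h,m}_t(x) - \eta^{h,m}_s(x) &= \bigl(e^{(t-s)A}-I\bigr)\eta^{h,m}_s(x) + \int_s^t e^{(t-r)A}\,D\bar F^m(\bar X^m_r)\cdot\eta^{h,m}_r(x)\,dr \\
&\quad + \int_s^t e^{(t-r)A}\,D_\xi\!\left[\bar X^m_r\,\eta^{h,m}_r(x)\right]dr =: J_1 + J_2 + J_3,
\end{align*}
and estimate the three pieces separately in the $H^1$ norm.

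For $J_1$, apply \eqref{P4} with $\sigma=\gamma-1$ and then \eqref{LemDeriEtah 02} at time $s$ to obtain $\bigl[\EE\|J_1\|_1^2\bigr]^{1/2}\leq C_T(t-s)^{(\gamma-1)/2}e^{\delta|x|^2}s^{-(\gamma-\theta)/2}\|h\|_\theta$. For $J_2$, use \eqref{P3} to extract the factor $(t-r)^{-1/2}$ and invoke the uniform bound \eqref{D1Fm} to replace $D\bar F^m(\bar X^m_r)\cdot\eta^{h,m}_r(x)$ by $C|\eta^{h,m}_r(x)|$; Minkowski's inequality combined with \eqref{ELemDeriEtah 01} then gives $C_Te^{\delta|x|^2}\|h\|_\theta(t-s)^{1/2}$, which is absorbed into the desired bound via $(t-s)^{1/2}\leq T^{(2-\gamma)/2}(t-s)^{(\gamma-1)/2}$ (here I also use $|h|\leq C\|h\|_\theta$).

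The main obstacle lies in the nonlinear term $J_3$. I would split $D_\xi[\bar X^m_r\eta^{h,m}_r(x)] = B(\bar X^m_r,\eta^{h,m}_r(x)) + B(\eta^{h,m}_r(x),\bar X^m_r)$, apply \eqref{P3} with exponent $(3-\gamma)/2$ corresponding to the $H^{-(2-\gamma)}$ loss, and invoke the bilinear estimates from Lemma \ref{Property B1} to bound
\[
\|B(\bar X^m_r,\eta^{h,m}_r(x))\|_{-(2-\gamma)} \leq C\,|\bar X^m_r|\,\|\eta^{h,m}_r(x)\|_\gamma,
\]
and symmetrically with the roles exchanged. Applying Minkowski and Cauchy--Schwarz, the four moment estimates \eqref{A.1.0}, \eqref{A.1.2}, \eqref{ELemDeriEtah 01} and \eqref{LemDeriEtah 02} produce an integrand dominated by $C_Te^{\delta|x|^2}\|h\|_\theta\,r^{-(\gamma-\theta)/2}(t-r)^{-(3-\gamma)/2}$; bounding $r^{-(\gamma-\theta)/2}\leq s^{-(\gamma-\theta)/2}$ in the integral and computing the elementary beta integral $\int_s^t(t-r)^{-(3-\gamma)/2}dr=\tfrac{2}{\gamma-1}(t-s)^{(\gamma-1)/2}$ (finite since $(3-\gamma)/2<1$) yields exactly the claimed factorization $(t-s)^{(\gamma-1)/2}s^{-(\gamma-\theta)/2}$. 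The delicate point is checking that the combined polynomial growth in $|x|$ and $\|x\|_\theta$ coming from \eqref{A.1.2} and \eqref{LemDeriEtah 02}, together with the two $e^{\delta|x|^2/2}$ prefactors in \eqref{ELemDeriEtah 01} and \eqref{LemDeriEtah 02}, can be absorbed into a single exponential $e^{\delta|x|^2}$ at the cost of slightly enlarging $\delta$, which is permissible since $\delta$ can still be chosen smaller than $(\lambda_1-2L_F)/(4\|Q_1\|)$.
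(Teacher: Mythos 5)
Your proposal is correct and follows essentially the same route as the paper: the identical three-term mild-solution decomposition, with $(e^{(t-s)A}-I)\eta^{h,m}_s(x)$ handled via \eqref{P4} and \eqref{LemDeriEtah 02}, the $D\bar F^m$ term via the uniform derivative bound and \eqref{ELemDeriEtah 01}, and the bilinear term via the smoothing estimate into $H^{-(2-\gamma)}$. The only cosmetic difference is that in the bilinear term the paper pairs $|\cdot|$ with $\|\cdot\|_{1}$ (yielding an $s^{-1/2}$ singularity, which is then dominated by $s^{-(\gamma-\theta)/2}$ since $\theta\leq\gamma-1$), whereas you pair $|\cdot|$ with $\|\cdot\|_{\gamma}$; both land on the stated bound.
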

\begin{proof}
From \eqref{Equa derivative}, we write
\begin{eqnarray} \label{ContinI5}
\eta^{h,m}_t(x)-\eta^{h,m}_s(x)=\!\!\!\!\!\!\!\!&&(e^{(t-s)A}-I)\eta^{h,m}_s(x)+\int_{s}^{t}e^{(t-r)A}D\bar{F}^m(\bar{X}^m_r)\cdot\eta^{h,m}_r(x)dr \nonumber\\
\!\!\!\!\!\!\!\!&&+\int_{s}^{t}e^{(t-r)A}D_{\xi}\left[\bar{X}^m_r\eta^{h,m}_r(x)\right]dr.
\end{eqnarray}

Using \eqref{P4} and \eqref{LemDeriEtah 02}, we have
\begin{eqnarray}  \label{I51}
\left[\EE\|(e^{(t-s)A}-I)\eta^{h,m}_s(x)\|^2_{1}\right]^{1/2}\leq\!\!\!\!\!\!\!\!&& C(t-s)^{\frac{\gamma-1}{2}}\left[\EE\|\eta^{h,m}_s(x)\|^2_{\gamma}\right]^{1/2}\nonumber\\
\leq\!\!\!\!\!\!\!\!&&  C_T(t-s)^{\frac{\gamma-1}{2}}s^{-\frac{\gamma-\theta}{2}}e^{\delta |x|^2}\|h\|_{\theta}.
\end{eqnarray}

By \eqref{ELemDeriEtah 01}, we get
\begin{eqnarray}   \label{I52}
\left[\EE\left\|\int_{s}^{t}e^{(t-r)A}D\bar{F}^m(\bar{X}^m_r)\cdot\eta^{h,m}_r(x)dr\right \|^2_{1}\right]^{1/2}
\leq\!\!\!\!\!\!\!\!&&\left[\EE\left|\int_{s}^{t}(t-r)^{-\frac{1}{2}}|D\bar{F}^m(\bar{X}^m_r)\cdot \eta^{h,m}_r(x)| dr\right|^2\right]^{1/2}\nonumber\\
\leq\!\!\!\!\!\!\!\!&&  C_T(t-s)^{1/2}e^{\delta|x|^{2}}|h|.
\end{eqnarray}

By Lemmas \ref{BarXgamma} and \ref{eta}, we obtain for any $\gamma\in (1,3/2)$,
\begin{eqnarray}  \label{I53}
&&\left[\EE\left\|\int_{s}^{t}e^{(t-r)A}D_{\xi}\left[\bar{X}^m_r\eta^{h,m}_r(x)\right]dr\right\|^2_{1}\right]^{1/2}\nonumber\\
\leq\!\!\!\!\!\!\!\!&&
\left[\EE\left[\int_{s}^{t}(t-r)^{-\frac{3-\gamma}{2}}
\left\|B^m(\bar{X}^m_r,\eta^{h,m}_r(x))+B^m(\eta^{h,m}_r(x),\bar{X}^m_r)\right\|_{-(2-\gamma)}dr\right]^{2}\right]^{1/2}  \nonumber\\
\leq\!\!\!\!\!\!\!\!&&C\int_{s}^{t}(t-r)^{-\frac{3-\gamma}{2}}\left[\EE\left(|\bar{X}^m_r|^2\|\eta^{h,m}_r(x)\|^2_{1}+\|\bar{X}^m_r\|^2_{1}|\eta^{h,m}_r(x)|^2\right)\right]^{1/2}dr\nonumber\\
\leq\!\!\!\!\!\!\!\!&&
C_T(t-s)^{\frac{\gamma-1}{2}}s^{-\frac{1}{2}}e^{\delta|x|^{2}}|h| .
\end{eqnarray}

Hence by  \eqref{ContinI5}-\eqref{I53}, we easily obtain \eref{F5.14} holds. The proof is complete.
\end{proof}

\begin{lemma} \label{ESDET} For any $\theta\in (0,1]$ and $0<s<t\leq T$,  there exists a constant $C_{T}>0$ and small enough $\delta>0$ such that
\begin{align}
\EE\left\|\eta^{h,m}_t(x)\right\|_2\leq C_Te^{\delta|x|^2}(1+\|x\|_{\theta})\|h\|_{\theta}t^{-1+\theta/2}.  \nonumber
\end{align}
\end{lemma}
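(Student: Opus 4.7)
The strategy parallels Lemma \ref{Xbar2}. From the mild formula \eqref{mild eta}, freezing the time-dependent integrand at $s=t$ gives the decomposition
\begin{align*}
\eta^{h,m}_t(x) = e^{tA}h &\;+\; \int_0^t e^{(t-s)A}\pi_m\partial_\xi\bigl[\bar X^m_t\,\eta^{h,m}_t(x)\bigr]\,ds \\
&\;+\; \int_0^t e^{(t-s)A}\pi_m\partial_\xi\bigl[\bar X^m_s\,\eta^{h,m}_s(x) - \bar X^m_t\,\eta^{h,m}_t(x)\bigr]\,ds \\
&\;+\; \int_0^t e^{(t-s)A}D\bar F^m(\bar X^m_t)\cdot\eta^{h,m}_t(x)\,ds \\
&\;+\; \int_0^t e^{(t-s)A}\bigl\{D\bar F^m(\bar X^m_s)\cdot\eta^{h,m}_s(x) - D\bar F^m(\bar X^m_t)\cdot\eta^{h,m}_t(x)\bigr\}\,ds \\
=: \; e^{tA}h &\;+\; J_1 + J_2 + J_3 + J_4.
\end{align*}
The linear term already attains the claimed rate: by \eqref{P3}, $\|e^{tA}h\|_2\leq Ct^{-1+\theta/2}\|h\|_\theta$, and it remains to show that each $J_k$ is dominated by this.

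For the frozen-integrand pieces $J_1$ and $J_3$, the elementary identity $\int_0^t e^{(t-s)A}v\,ds=(-A)^{-1}(I-e^{tA})v$ reduces the $H^2$-norm to $\|(I-e^{tA})v\|_0\leq C\|v\|_0$. Consequently, by \eqref{D1Fm}, $\|J_3\|_2\leq C|\eta^{h,m}_t(x)|$, whose $L^p$-norm is controlled by \eqref{ELemDeriEtah 01}. For $J_1$, Lemma \ref{Property B1} yields $\|\pi_m\partial_\xi(\bar X^m_t\eta^{h,m}_t)\|_0\leq C\|\bar X^m_t\|_{\gamma_1}\|\eta^{h,m}_t\|_{\gamma_2}$ for appropriate positive $\gamma_1,\gamma_2$ with $\gamma_1+\gamma_2$ slightly larger than $1/2$; plugging in the interpolation estimates \eqref{BarX1} and \eqref{eta1} produces a factor $t^{-a}(1+\|x\|_\theta)\|h\|_\theta\cdot e^{\delta|x|^2}$ with $a\leq 1-\theta/2$ by a suitable choice of $\gamma_1,\gamma_2$.

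The difference terms $J_2$ and $J_4$ are treated in the spirit of $I_3$, $I_5$ of Lemma \ref{Xbar2}. Splitting
$$\bar X^m_s\eta^{h,m}_s-\bar X^m_t\eta^{h,m}_t=(\bar X^m_s-\bar X^m_t)\eta^{h,m}_s+\bar X^m_t(\eta^{h,m}_s-\eta^{h,m}_t),$$
we bound each summand using the operator estimate $\|(-A)e^{(t-s)A}\|_{\mathcal L(H)}\leq C(t-s)^{-1}$, the bilinear inequalities of Lemma \ref{Property B1}, and the H\"older-in-time moduli \eqref{COXT} for $\bar X^m$ and \eqref{F5.14} for $\eta^{h,m}$. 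For $J_4$, \eqref{D1Fm} reduces matters to products of $(t-s)^{-1/2}$ and the same Hölder moduli.

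The main technical obstacle lies in calibrating exponents in $J_2$: one must choose $\gamma\in(1,3/2)$ close enough to $1$ so that the semigroup singularity $(t-s)^{-1}$ combines with the Hölder factor $(t-s)^{(\gamma-1)/2}$ into an integrable $(t-s)^{-1+(\gamma-1)/2}$, while the initial-time singularity $s^{-(\gamma-\theta)/2}$ from \eqref{COXT}, \eqref{F5.14} remains integrable (which forces $\gamma<\theta+2$, automatic for $\theta\in(0,1]$). Interpolating $\|\eta^{h,m}_s\|_{\gamma_1}$ between \eqref{ELemDeriEtah 01} and \eqref{LemDeriEtah 02} then matches all singular factors, and the resulting Beta integral delivers the claimed scaling $t^{-1+\theta/2}$ with the prefactor $C_T e^{\delta|x|^2}(1+\|x\|_\theta)\|h\|_\theta$.
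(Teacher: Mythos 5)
Your proposal is correct and follows essentially the same route as the paper's proof: the same decomposition of the mild formula obtained by freezing the integrands at time $t$, the identity $\int_0^t Ae^{(t-s)A}v\,ds=(e^{tA}-I)v$ for the frozen pieces, and the time-H\"older moduli \eqref{COXT} and \eqref{F5.14} combined with the bilinear estimates of Lemma \ref{Property B1} and the interpolation bounds \eqref{BarX1}, \eqref{eta1}, \eqref{LemDeriEtah 02} for the difference pieces. The only cosmetic deviations are the labelling of the terms and a slightly loose bookkeeping of the Sobolev indices in the frozen bilinear term, neither of which affects the argument.
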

\begin{proof}
We first control the first term in \eqref{Equa derivative}.
Notice that
\begin{eqnarray}
\eta^{h,m}_t(x)
=\!\!\!\!\!\!\!\!&&e^{tA}h+\int_{0}^{t}e^{(t-s)A}D\bar{F}^m(\bar{X}^m_t)\cdot\eta^{h,m}_t(x)ds \nonumber\\
\!\!\!\!\!\!\!\!&&+\int_{0}^{t}e^{(t-s)A}\left[D\bar{F}^m(\bar{X}^m_s)\cdot\eta^{h,m}_s(x)-D\bar{F}^m(\bar{X}^m_t)\cdot\eta^{h,m}_t(x)\right]ds  \nonumber\\
\!\!\!\!\!\!\!\!&&+\int_{0}^{t}e^{(t-s)A}D_{\xi}\left[\bar{X}^m_t\eta^{h,m}_t(x)\right]ds \nonumber\\
\!\!\!\!\!\!\!\!&&+\int_{0}^{t}e^{(t-s)A}\left\{D_{\xi}\left[\bar{X}^m_s\eta^{h,m}_s(x)\right]-D_{\xi}\left[\bar{X}^m_t\eta^{h,m}_t(x)\right]\right\}ds \nonumber\\
=\!\!\!\!\!\!\!\!&&\sum^5_{k=1}J_{k}(t). \nonumber
\end{eqnarray}

For the term $J_{1}(t)$. By \eref{P3} we have
\begin{align} \label{AI1}
\|J_1(t)\|_2 \leq C t^{-1+\theta/2} \|h\|_{\theta}.
\end{align}

For the term $J_{2}(t)$. By \eref{ELemDeriEtah 01}, there exists a small enough $\delta>0$ such that
\begin{align} \label{AI2}
\EE\|J_{2}(t)\|_2= \EE\left |(e^{tA}-I)D\bar{F}^m(\bar{X}^m_t)\cdot\eta^{h,m}_t(x)\right|
\leq  C\EE\left|\eta^{h,m}_t(x)\right|
\leq  C_Te^{\delta|x|^{2}} |h|.
\end{align}

For the term $J_{3}(t)$. According to \eref{COXT} and \eref{F5.14}, we obtain
\begin{eqnarray} \label{AI3}
\EE\| J_{3}(t) \|_2
\leq\!\!\!\!\!\!\!\!&&
C\EE\int_{0}^{t}(t-s)^{-1}
\left\| D\bar{F}^m(\bar{X}^m_t)\cdot\eta^{h,m}_t(x)-D\bar{F}^m(\bar{X}^m_s)\cdot\eta^{h,m}_s(x)\right\|ds
\nonumber\\
\leq\!\!\!\!\!\!\!\!&&
C\EE\int_{0}^{t}(t-s)^{-1}\left\|[D\bar{F}^m(\bar{X}^m_t)-D\bar{F}^m(\bar{X}^m_s)]\cdot\eta^{h,m}_t(x)\right\|ds \nonumber\\
\!\!\!\!\!\!\!\!&&
+C\EE\int_{0}^{t}(t-s)^{-1}\left\|D\bar{F}^m(\bar{X}^m_s)\cdot
\left(\eta^{h,m}_t(x)-\eta^{h,m}_s(x)\right)\right\|ds \nonumber\\
\leq\!\!\!\!\!\!\!\!&&
C\int_{0}^{t}(t-s)^{-1}\left[\EE\|\bar{X}^m_t-\bar{X}^m_s\|^2_{1}\right]^{1/2}\left[\EE|\eta^{h,m}_t(x)|^2\right]^{1/2}ds\nonumber\\
&&+ C\int_{0}^{t}(t-s)^{-1} \left[\EE\left\|\eta^{h,m}_t(x)-\eta^{h,m}_s(x)\right\|^2_{1}\right]^{1/2}ds
\nonumber\\
\leq\!\!\!\!\!\!\!\!&&
C_Te^{\delta |x|^{2}}|h|\int_{0}^{t}(t-s)^{-1}(t-s)^{\frac{\gamma-1}{2}}s^{-\frac{\gamma}{2}}ds\nonumber\\
\leq\!\!\!\!\!\!\!\!&&C_Tt^{-\frac{1}{2}}e^{\delta |x|^{2}}|h|.
\end{eqnarray}

For the term $J_{4}(t)$. By Lemma \ref{Property B1}, \eqref{BarX1} and \eqref{eta1},  we deduce
\begin{eqnarray} \label{AI4}
\EE\|J_{4}(t)\|_2=\!\!\!\!\!\!\!\!&&
\EE\left\|(e^{tA}-I)D_{\xi}\left[\bar{X}^m_t\eta^{h,m}_t(x)\right]\right\| \nonumber\\
\leq\!\!\!\!\!\!\!\!&&
2 \EE\| B^m(\bar{X}^m_t,\eta^{h,m}_t(x))+B^m(\eta^{h,m}_t(x),\bar{X}^m_t)\|       \nonumber\\
\leq\!\!\!\!\!\!\!\!&&
C\left[\EE\left\|\bar{X}^m_t\right\|^2_{\frac{3}{2}-\gamma}\right]^{1/2}\left[\EE\left\|\eta^{h,m}_t(x)\right\|^2_{\gamma}\right]^{1/2}\nonumber\\
&&+C\left[\EE\left\|\eta^{h,m}_t(x)\right\|^2_{\frac{3}{2}-\gamma}\right]^{1/2}\left[\EE\left\|\bar{X}^m_t\right\|^2_{\gamma}\right]^{1/2}\nonumber\\
\leq  \!\!\!\!\!\!\!\!&&
Ct^{-1+\theta/2}e^{\delta |x|^{2}}\|h\|_{\theta}.
\end{eqnarray}

For the term $J_{5}(t)$.  it follows that for small enough $\delta'\in (0,1/2)$,
we get
\begin{eqnarray} \label{AI5}
\EE\|J_{5}(t)\|_2\leq\!\!\!\!\!\!\!\!&&
C\EE\Big|\int_{0}^{t}(-A)e^{(t-s)A}\Big[B^m\left(\bar{X}^m_s, \eta^{h,m}_t(x)-\eta^{h,m}_s(x)\right)+B^m\left(\eta^{h,m}_t(x)-\eta^{h,m}_s(x), \bar{X}^m_s\right)\nonumber\\
\!\!\!\!\!\!\!\!&&
+B^m\left(\bar{X}^m_t-\bar{X}^m_s, \eta^{h,m}_t(x)\right)+B^m\left(\eta^{h,m}_t(x), \bar{X}^m_t-\bar{X}^m_s\right)\Big]ds\Big| \nonumber\\
\leq\!\!\!\!\!\!\!\!&&
C\int_{0}^{t}(t-s)^{-1-\frac{\delta'}{2}}\EE\|B^m\left(\bar{X}^m_s, \eta^{h,m}_t(x)-\eta^{h,m}_s(x)\right)\|_{-\delta'}ds\nonumber\\
&&+C\int_{0}^{t}(t-s)^{-1-\frac{\delta'}{2}}\EE\|B^m\left(\eta^{h,m}_t(x)-\eta^{h,m}_s(x), \bar{X}^m_s\right)\|_{-\delta'}ds\nonumber\\
&&+C\int_{0}^{t}(t-s)^{-1}\EE|B\left(\bar{X}^m_t-\bar{X}^m_s, \eta^{h,m}_t(x)\right)|ds\nonumber\\
&&+C\int_{0}^{t}(t-s)^{-1}\EE|B\left(\eta^{h,m}_t(x), \bar{X}^m_t-\bar{X}^m_s\right)|ds\nonumber\\
:=\!\!\!\!\!\!\!\!&&\sum^4_{k=1}J_{5k}(t).
\end{eqnarray}
According to \eref{A.1.0}, \eref{BarX1} and \eref{COXT}, it follows
\begin{eqnarray} \label{J51}
J_{51}(t)\leq\!\!\!\!\!\!\!\!&& C\int_{0}^{t}(t-s)^{-1-\frac{\delta'}{2}}\EE\left(\|\bar{X}^m_s\|_{1/2-\delta'} \|\eta^{h,m}_t(x)-\eta^{h,m}_s(x)\|_{1}\right)ds\nonumber\\
\leq\!\!\!\!\!\!\!\!&& C\int_{0}^{t}(t-s)^{-1-\frac{\delta'}{2}}\left(\EE\|\bar{X}^m_s\|^2_{1/2-\delta'}\right)^{1/2}\left(\EE\|\eta^{h,m}_t(x)-\eta^{h,m}_s(x)\|^2_{1}\right)^{1/2}ds\nonumber\\
\leq\!\!\!\!\!\!\!\!&& C_T e^{\delta|x|^2}\|h\|_{\theta}\int_{0}^{t}(t-s)^{-1-\frac{\delta'}{2}}(t-s)^{\frac{\gamma-1}{2}}s^{-\frac{\gamma-\theta}{2}}s^{-\frac{1/2-\delta'}{2}}ds\nonumber\\
\leq\!\!\!\!\!\!\!\!&& C_T e^{\delta|x|^2}\|h\|_{\theta} t^{-1+\theta/2}.
\end{eqnarray}
By \eref{ELemDeriEtah 01}, \eref{BarX1} and \eref{F5.14}, there exists small enough $\delta>0$ such that for small enough $\delta'\in (0,1/2)$,
\begin{eqnarray} \label{J52}
J_{52}(t)\leq\!\!\!\!\!\!\!\!&& C\int_{0}^{t}(t-s)^{-1-\frac{\delta'}{2}}\EE\left(\|\eta^{h,m}_t(x)-\eta^{h,m}_s(x)\|_{1/2} \|\bar{X}^m_s\|_{1}\right)ds\nonumber\\
\leq\!\!\!\!\!\!\!\!&& C\int_{0}^{t}(t-s)^{-1-\frac{\delta'}{2}}\left(\EE|\eta^{h,m}_t(x)-\eta^{h,m}_s(x)|^2\right)^{1/4}\nonumber\\
&&\quad\quad\quad\quad\cdot \left(\EE\|\eta^{h,m}_t(x)-\eta^{h,m}_s(x)\|^2_{1}\right)^{1/4} \left(\EE\|\bar{X}^m_s\|^2_{1}\right)^{1/2}ds\nonumber\\
\leq\!\!\!\!\!\!\!\!&& C_Te^{\delta|x|^2}\|h\|_{\theta}\int_{0}^{t}(t-s)^{-1-\frac{\delta'}{2}}(t-s)^{\frac{\gamma-1}{4}}s^{-\frac{\gamma-\theta}{4}}s^{-\frac{1}{2}}ds\nonumber\\
\leq\!\!\!\!\!\!\!\!&& C_Te^{\delta|x|^2}\|h\|_{\theta}t^{-1+\theta/2}.
\end{eqnarray}
By \eref{BarX1} and  \eref{eta1}, we have
\begin{eqnarray} \label{J53}
J_{53}(t)+J_{54}(t)\leq\!\!\!\!\!\!\!\!&& C\int_{0}^{t}(t-s)^{-1}\EE\left(\|\bar{X}^m_t-\bar{X}^m_s\|_{1} \| \eta^{h,m}_t(x)\|_{1}\right)ds\nonumber\\
\leq\!\!\!\!\!\!\!\!&& C\int_{0}^{t}(t-s)^{-1}\left(\EE\|\bar{X}^m_t-\bar{X}^m_s\|^2_{1}\right)^{1/2} \left(\EE\| \eta^{h,m}_t(x)\|^2_{1}\right)^{1/2}ds\nonumber\\
\leq\!\!\!\!\!\!\!\!&& C_Te^{\delta|x|^2}(1+\|x\|_{\theta})|h|\int_{0}^{t}(t-s)^{-1+\frac{\gamma-1}{2}}s^{-\frac{\gamma-\theta}{2}}t^{-1/2}ds\nonumber\\
\leq\!\!\!\!\!\!\!\!&& C_Te^{\delta|x|^2}(1+\|x\|_{\theta})|h|t^{-1+\theta/2}.
\end{eqnarray}

Finally, by \eref{AI1}-\eref{J53}, we get the desired result. The proof is complete.
\end{proof}

Denote by $\zeta^{h,k,m}_t(x)$  the second derivative of $\bar{X}^m_t$
with respect to $x$ in the direction $(h,k)$. Then, $\zeta^{h,k,m}_t(x)$ satisfies the following equation:
 \begin{equation}\left\{\begin{array}{l}
\displaystyle
\frac{d\zeta^{h,k,m}_t(x)}{dt}=A\zeta^{h,k,m}_t(x)+\!\!\left[D\bar{F}^m(\bar{X}^m_t)\cdot \zeta^{h,k,m}_t(x)+D^2\bar{F}^m(\bar{X}^m_t)\cdot (\eta^{h,m}_t(x), \eta^{k,m}_t(x))\right]\!\!\nonumber\\
\quad\quad\quad\quad\quad\quad+D_{\xi}\left[\eta^{k,m}_t(x)\eta^{h,m}_t(x)\right]+D_{\xi}\left[\bar{X}^m_t(x)\zeta^{h,k,m}_t(x)\right],\nonumber\\
\zeta^{h,k,m}_0(x)=0.\nonumber
\end{array}\right.
\end{equation}
By \eref{D1Fm}, \eref{D2Fm}, \eref{eta1} and do some minor revisions in \cite[Propositions 3.3 and 3.4]{DaD}, we can easily prove that for any $T>0$, there exist positive constants $C, C_{T}$ such that
\begin{eqnarray}
|\zeta^{h,k,m}_t(x)|^2+\int^t_0 \|\zeta^{h,k,m}_s(x)\|^2_{1} ds\leq C_Te^{C\int^t_0 \|\bar{X}^m_s\|^{4/3}_{1} ds }|h|^2|k|^2,\quad t\in [0,T]. \label{Dxx barX}
\end{eqnarray}
Then by Lemma \ref{BarXgamma}, there exists small enough $\delta>0$ such that
\begin{eqnarray}
\sup_{t\in [0,T]}\EE|\zeta^{h,k,m}_t(x)|^2+\int^T_0\EE \|\zeta^{h,k,m}_s(x)\|^2_1 ds\leq C_{T} e^{\delta |x|^2}|h|^2|k|^2. \label{EDxx barX}
\end{eqnarray}

Denote by $\chi^{h,k,l}_t(x)$  the third derivative of $\bar{X}_t$
with respect to $x$ in the direction $(h,k,l)$. Then, $\chi^{h,k,l}_t(x)$ satisfies the following equation:
  \begin{equation}\left\{\begin{array}{l}
\displaystyle
\frac{d\chi^{h,k,l,m}_t(x)}{dt}=A\chi^{h,k,m}_t(x)+\!\!D\bar{F}^m(\bar{X}^m_t)\cdot \chi^{h,k,l,m}_t(x)+D^2\bar{F}^m(\bar{X}^m_t)\cdot (\zeta^{h,k,m}_t(x), \eta^{l,m}_t(x))\nonumber\\
\quad\quad\quad\quad\quad\quad   +D^2\bar{F}^m(\bar{X}^m_t)\cdot (\zeta^{h,l,m}_t(x),\eta^{k,m}_t(x))+D^2\bar{F}^m(\bar{X}^m_t)\cdot (\eta^{h,m}_t(x),\zeta^{k,l,m}_t(x))\nonumber\\
\quad\quad\quad\quad\quad\quad  +D^3\bar{F}^m(\bar{X}^m_t)\cdot (\eta^{h,m}_t(x), \eta^{k,m}_t(x), \eta^{l,m}_t(x))+D_{\xi}\left[\zeta^{k,l,m}_t(x)\eta^{h,m}_t(x)\right]\nonumber\\
\quad\quad\quad\quad\quad\quad +D_{\xi}\left[\eta^{k,m}_t(x)\zeta^{h,l,m}_t(x)\right]+D_{\xi}\left[\eta^{l,m}_t(x)\zeta^{h,k,m}_t(x)\right]+D_{\xi}\left[\bar{X}^m_t(x)\chi^{h,k,l,m}_t(x)\right]\nonumber\\
\chi^{h,k,l,m}_0(x)=0.\nonumber
\end{array}\right.
\end{equation}
By \eref{D1Fm}-\eref{D3Fm}, \eref{ELemDeriEtah 01}, \eref{eta1}, \eref{Dxx barX} and a straightforward computation,  we can prove that for any $T>0$, there exists $C_{T}$ and small enough $\delta>0$ such that
\begin{eqnarray}
\sup_{t\in [0,T]}\EE|\chi^{h,k,l,m}_t(x)|^2\leq C_{T}|h|^2|k|^2|l|^2 e^{\delta |x|^2}.\label{Dxxx barX}
\end{eqnarray}

By the preparation above, we present the regularity of the solution of equation \eref{KE}.
\begin{lemma} \label{Lemma3.6}
For any $h,k,l\in H_m$, $\theta\in(0,1]$, there exists small enough $\delta>0$ such that
\begin{eqnarray}
&&\sup_{m\geq 1}|\partial_t(D_x u_m(t,x))\cdot h|\leq C_T t^{-1+\theta/2}\|h\|_{\theta}(1+\|x\|_{\theta})e^{\delta|x|^2},\quad t\in(0,T],\label{UE2}\\
&&\sup_{m\geq 1}|D_{x} u_m(t,x)\cdot h|\leq C_{T}|h|e^{\delta|x|^2},\quad t\in[0,T],\label{UE3}\\
&&\sup_{m\geq 1}|D_{xx} u_m(t,x)\cdot (h,k)|\leq C_{T} |h||k|e^{\delta|x|^2},\quad t\in[0,T],\label{UE4}\\
&&\sup_{m\geq 1}|D_{xxx} u_m(t,x)\cdot (h,k,l)|\leq C_{T} |h||k||l|e^{\delta|x|^2},\quad  t\in[0,T].\label{UE5}
\end{eqnarray}
\end{lemma}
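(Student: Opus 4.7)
The strategy is to exploit the probabilistic representation $u_m(t,x)=\mathbb{E}\phi(\bar{X}^m_t(x))$ together with the flow derivatives $\eta^{h,m}_t$, $\zeta^{h,k,m}_t$, $\chi^{h,k,l,m}_t$, and to reduce each assertion in \eqref{UE2}--\eqref{UE5} to the a priori bounds already established in Lemma \ref{eta}, Lemma \ref{ESDET}, \eqref{EDxx barX}, \eqref{Dxxx barX}, and Lemma \ref{Xbar2}.

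For the spatial bounds \eqref{UE3}--\eqref{UE5}, I would differentiate $u_m$ directly in $x$ by the chain rule,
\begin{align*}
D_x u_m(t,x)\cdot h &= \mathbb{E}\langle D\phi(\bar{X}^m_t),\eta^{h,m}_t\rangle,\\
D_{xx}u_m(t,x)\cdot(h,k) &= \mathbb{E}\bigl[D^2\phi(\bar{X}^m_t)\cdot(\eta^{h,m}_t,\eta^{k,m}_t)+D\phi(\bar{X}^m_t)\cdot\zeta^{h,k,m}_t\bigr],
\end{align*}
together with the analogous longer formula for $D_{xxx}u_m\cdot(h,k,l)$, which features the extra term $\mathbb{E}[D\phi(\bar{X}^m_t)\cdot\chi^{h,k,l,m}_t]$ and three cross terms of $D^2\phi$ paired with $\zeta$ and $\eta$. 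Because $\phi\in C^3_b(H)$ forces $|D^j\phi|\leq C$ for $j=1,2,3$, Cauchy--Schwarz combined with \eqref{ELemDeriEtah 01}, \eqref{EDxx barX}, and \eqref{Dxxx barX} closes \eqref{UE3}--\eqref{UE5} at once.

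For the time derivative \eqref{UE2}, I would work in the finite-dimensional Galerkin space $H_m$ (where all operators are bounded classically) and apply It\^o's formula to $\langle D\phi(\bar{X}^m_t),\eta^{h,m}_t\rangle$, using the SDE for $\bar{X}^m_t$ and the pathwise linearized equation satisfied by $\eta^{h,m}_t$. After taking expectations the martingale part drops out, leaving
\begin{align*}
\partial_t\bigl(D_x u_m\cdot h\bigr)
&= \mathbb{E}\langle D^2\phi(\bar{X}^m_t)[A\bar{X}^m_t+B^m(\bar{X}^m_t)+\bar{F}^m(\bar{X}^m_t)],\eta^{h,m}_t\rangle\\
&\quad+\mathbb{E}\langle D\phi(\bar{X}^m_t),A\eta^{h,m}_t+D\bar{F}^m(\bar{X}^m_t)\cdot\eta^{h,m}_t+D_\xi(\bar{X}^m_t\eta^{h,m}_t)\rangle\\
&\quad+\tfrac12\sum_k\mathbb{E}\,D^3\phi(\bar{X}^m_t)\cdot(\sqrt{Q_1}e_k,\sqrt{Q_1}e_k,\eta^{h,m}_t);
\end{align*}
the same expression arises equivalently by differentiating $\partial_t u_m=\bar{\mathscr{L}}^m_1 u_m$ in the direction $h$.

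The main obstacle is the two summands carrying the unbounded operator $A$, namely $\mathbb{E}\langle D^2\phi(\bar{X}^m_t)A\bar{X}^m_t,\eta^{h,m}_t\rangle$ and $\mathbb{E}\langle D\phi(\bar{X}^m_t),A\eta^{h,m}_t\rangle$. These cannot be closed at the $H$-regularity level, and controlling them is precisely the purpose of the $H^2$-estimates in Lemmas \ref{Xbar2} and \ref{ESDET}: bounding $\|A\cdot\|\leq\|\cdot\|_2$ and applying Cauchy--Schwarz against the $L^2$-bound \eqref{ELemDeriEtah 01} in the first summand, and invoking Lemma \ref{ESDET} directly in the second, produces precisely the singular factor $t^{-1+\theta/2}$ in \eqref{UE2}. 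The remaining summands (those with $B^m$, $\bar{F}^m$, $D\bar{F}^m$, the $D_\xi$-product, and the trace term) are handled by Lemma \ref{Property B1}, \eqref{D1Fm}, \eqref{F2.21}, \eqref{A.1.0}--\eqref{A.1.1} and give only integrable time singularities dominated by $t^{-1+\theta/2}$; polynomial factors in $\|x\|_\theta$ or $|x|$ that exceed the target are absorbed into $e^{\delta|x|^2}$ by a mild enlargement of $\delta$.
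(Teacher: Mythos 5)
Your proposal is correct and follows essentially the same route as the paper: the same chain-rule representations of $D_xu_m$, $D_{xx}u_m$, $D_{xxx}u_m$ via $\eta^{h,m}$, $\zeta^{h,k,m}$, $\chi^{h,k,l,m}$ combined with $\phi\in C^3_b(H)$ for \eqref{UE3}--\eqref{UE5}, and the same It\^o-formula identity for $\partial_t(D_xu_m\cdot h)$ with the two $A$-terms controlled by the $H^2$-bounds of Lemmas \ref{Xbar2} and \ref{ESDET} to produce the $t^{-1+\theta/2}$ singularity in \eqref{UE2}. No substantive differences.
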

\begin{proof}
For any $h,k\in H_m$, note that
\begin{eqnarray*}
&&D_x u_m(t,x)\cdot h=\EE[D\phi(\bar{X}^m_t)\cdot \eta^{h,m}_t(x)],\\
&&D_{xx} u_m(t,x)\cdot (h,k)=\EE\left[D^2\phi(\bar{X}^m_t)\cdot (\eta^{h,m}_t(x),\eta^{k,m}_t(x))\right]\\
&&\quad\quad\quad\quad\quad\quad\quad\quad\quad+ \EE\left[D\phi(\bar{X}^m_t)\cdot  \zeta^{h,k,m}_t(x)\right],\\
&&D_{xxx} u_m(t,x)\cdot (h,k,l)=\EE\left[D^3\phi(\bar{X}^m_t)\cdot (\eta^{h,m}_t(x),\eta^{k,m}_t(x),\eta^{l,m}_t(x))\right]\\
&&\quad\quad\quad+\EE\left[D^2\phi(\bar{X}^m_t)\cdot (\zeta^{h,l,m}_t(x),\eta^{k,m}_t(x))+D^2\phi(\bar{X}^m_t)\cdot (\eta^{h,m}_t(x),\zeta^{k,l,m}_t(x))\right]\\
&&\quad\quad\quad+\EE\left[D^2\phi(\bar{X}^m_t)\cdot  (\zeta^{h,k,m}_t(x), \eta^{l,m}_t(x))+D\phi(\bar{X}^m_t)\cdot  \chi^{h,k,l,m}_t(x)\right].
\end{eqnarray*}
Since $\phi\in C^{3}_b(H)$, then \eref{UE3}-\eref{UE5} can easily obtained by \eref{ELemDeriEtah 01}, \eref{Dxx barX} and \eref{Dxxx barX}.
Now, we are going to prove \eref{UE2}.

By It\^{o}'s formula and taking expectation, we have
\begin{eqnarray*}
\EE[D\phi(\bar{X}^m_t)\cdot \eta^{h,m}_t(x)]=\!\!\!\!\!\!\!\!&&D\phi(x)\cdot h+\int^t_0 \EE\Big[ D^2\phi(\bar{X}^m_s)\cdot \left (\eta^{h,m}_s(x), A\bar{X}^m_s+B^m(\bar{X}^m_s)+\bar{F}^m(\bar{X}^m_s)\right)\Big]ds\\
&&\!\!\!\!\!\!\!\!\!\!\!\!\!\!\!\!+ \int^t_0\EE\Big[D\phi(\bar{X}^m_s)\cdot\left(A\eta^{h,m}_s(x)+D\bar{F}^m(\bar{X}^m_s)\cdot \eta^{h,m}_s(x)+D_{\xi}\left[\bar{X}^m_s(x)\eta^{h,m}_s(x)\right]\right)\Big] ds\\
&&\!\!\!\!\!\!\!\!\!\!\!\!\!\!\!\!+\frac{1}{2}\sum^m_{k=1}\int^t_0 \EE\Big[D^3\phi(\bar{X}^m_s)\cdot (\eta^{h,m}_s(x), \sqrt{Q_1}e_k, \sqrt{Q_1}e_k) \Big]ds,
\end{eqnarray*}
which implies that
\begin{eqnarray*}
\partial_t(D_x u_m(t,x))\cdot h=\!\!\!\!\!\!\!\!&& \EE\Big[ D^2\phi(\bar{X}^m_t)\cdot \left (\eta^{h,m}_t(x), A\bar{X}^m_t+B^m(\bar{X}^m_t)+\bar{F}^m(\bar{X}^m_t)\right)\Big]\\
&&+\EE\Big[D\phi(\bar{X}^m_t)\cdot\left(A\eta^{h,m}_t(x)+D\bar{F}^m(\bar{X}^m_t)\cdot \eta^{h,m}_t(x)+D_{\xi}\left[\bar{X}^m_t(x)\eta^{h,m}_t(x)\right]\right)\Big]\\
&&+\frac{1}{2}\sum^m_{k=1}\EE\Big[D^3\phi(\bar{X}^m_t)\cdot (\eta^{h,m}_t(x), \sqrt{Q_1}e_k, \sqrt{Q_1}e_k) \Big].
\end{eqnarray*}
Combining \eref{ELemDeriEtah 01}, \eref{Dxx barX}, \eref{A.1.0}, \eref{A.1.2}, \eref{COXT} and \eref{barXm2}, we get for any $t\in (0,T]$ and $\gamma\in (1,\frac{3}{2})$,
\begin{eqnarray*}
|\partial_t(D_x u_m(t,x))\cdot h|=\!\!\!\!\!\!\!\!&& C\EE\left[|\eta^{h,m}_t(x)|(1+\|\bar{X}^m_t\|_2+|\bar{X}^m_t|+\|\bar{X}^m_t\|_{3/2-\gamma}\|\bar{X}^m_t\|_{\gamma})\right]\nonumber\\
&&+C\EE(\|\eta^{h,m}_t(x)\|_2+\|\bar{X}^m_t(x)\|_1\|\eta^{h,m}_t(x)\|_1)\\
\leq\!\!\!\!\!\!\!\!&& C_T t^{-1+\theta/2}\|h\|_{\theta}e^{\delta|x|^2}(1+\|x\|_{\theta}),
\end{eqnarray*}
which proves \eref{UE2}. The proof is complete.
\end{proof}

\vspace{0.2cm}
In order to study the regularity of the solution of Poisson equation \eref{WPE}, we need the following result, whose proof can be founded in \cite[Proposition 3.3]{GSX}.
\begin{lemma} \label{ergodicity in Hfinite}
For any measurable function $\varphi: H^{\beta}\rightarrow H^{\alpha}$ satisfying
$$
\|\varphi(x)-\varphi(y)\|_{\alpha}\leq C(1+\|x\|_{\beta}+\|y\|_{\beta})\|x-y\|_{\beta},
$$
where $\alpha,\beta\in (0,1)$. Then it holds that for any $t>0$,
\begin{eqnarray}
&&\sup_{m\geq 1}\left\| P^{x,m}_t\varphi(y)-\mu^{x,m}(\varphi)\right\|_{\alpha}\nonumber\\
\leq\!\!\!\!\!\!\!\!&& C\left[(t^{-\beta}+t^{-\beta/2})e^{-\frac{\lambda_{1}t}{4}}+(t^{-\frac{\beta}{2}}+1)e^{-\frac{(\lambda_1-L_G)t}{2}}\right](1+|x|^2+|y|^2). \label{ergodicity2}
\end{eqnarray}
\end{lemma}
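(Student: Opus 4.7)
The plan is to adapt the standard coupling-plus-dissipativity argument for exponential ergodicity, but carried out in the $H^\beta$-norm rather than the $H$-norm. First, fix $x$ and introduce a stationary coupling: take $\tilde y$ with law $\mu^{x,m}$ and independent of the driving noises; by invariance, the law of $Y^{x,\tilde y,m}_t$ is still $\mu^{x,m}$, so
\begin{equation*}
P^{x,m}_t\varphi(y)-\mu^{x,m}(\varphi) \;=\; \EE\!\left[\varphi(Y^{x,y,m}_t)-\varphi(Y^{x,\tilde y,m}_t)\right].
\end{equation*}
Using the assumed Lipschitz-type bound on $\varphi$ and Cauchy--Schwarz,
\begin{equation*}
\bigl\|P^{x,m}_t\varphi(y)-\mu^{x,m}(\varphi)\bigr\|_{\alpha} \;\leq\; C\bigl[\EE\bigl(1+\|Y^{x,y,m}_t\|_\beta^{2}+\|Y^{x,\tilde y,m}_t\|_\beta^{2}\bigr)\bigr]^{1/2}\bigl[\EE\|Y^{x,y,m}_t-Y^{x,\tilde y,m}_t\|_\beta^{2}\bigr]^{1/2}.
\end{equation*}

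Second, set $V_t:=Y^{x,y,m}_t-Y^{x,\tilde y,m}_t$ and establish two estimates. The $H$-contraction follows from $\ref{A1}$: since $\lambda_1-L_G>0$, $\tfrac{d}{dt}|V_t|^2\leq -2(\lambda_1-L_G)|V_t|^2$, so $|V_t|\leq e^{-(\lambda_1-L_G)t}|V_0|$. To promote this to the $H^\beta$-norm I would use the mild form
\begin{equation*}
V_t = e^{tA}V_0+\int_0^t e^{(t-s)A}\bigl[G^m(x,Y^{x,y,m}_s)-G^m(x,Y^{x,\tilde y,m}_s)\bigr]ds,
\end{equation*}
bound the first term by \eqref{P3} as $Ct^{-\beta/2}e^{-\lambda_1 t/2}|V_0|$, and for the second term combine the Lipschitz estimate \eqref{LG} on $G$ with the $H$-contraction. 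Splitting the integral at $t/2$ produces exactly the two regimes appearing in the statement: on $[0,t/2]$ the factor $(t-s)^{-\beta/2}e^{-\lambda_1(t-s)/2}$ is controlled by $Ct^{-\beta/2}e^{-\lambda_1 t/4}$ while $\int_0^{t/2}e^{-(\lambda_1-L_G)s}ds$ is bounded; on $[t/2,t]$, $e^{-(\lambda_1-L_G)s}$ is controlled by $e^{-(\lambda_1-L_G)t/2}$ while the semigroup factor integrates to a constant. This gives
\begin{equation*}
\bigl(\EE\|V_t\|_\beta^{2}\bigr)^{1/2} \;\leq\; C\bigl(\EE|V_0|^{2}\bigr)^{1/2}\bigl[t^{-\beta/2}e^{-\lambda_1 t/4}+e^{-(\lambda_1-L_G)t/2}\bigr],
\end{equation*}
and $\EE|V_0|^2 = |y|^2+\int|z|^2\mu^{x,m}(dz)\leq C(1+|x|^2+|y|^2)$ by a Lyapunov estimate against $\mu^{x,m}$.

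Third, moment bounds on $\|Y^{x,y,m}_t\|_\beta^2$: from the mild equation, \eqref{P3}, the linear growth \eqref{F2.21} of $G$, the $L^2$ bound \eqref{AYvare}, and $\ref{A4}$ (which controls the stochastic convolution in the $\beta$-norm as in \eqref{Ybeta}), one obtains $\EE\|Y^{x,y,m}_t\|_\beta^{2}\leq C(t^{-\beta}e^{-\lambda_1 t}|y|^{2}+1+|x|^{2})$; integrating this against $\mu^{x,m}$ yields the analogous bound with $y$ replaced by $\tilde y$ and without the $t^{-\beta}|y|^2$ term. Multiplying the two square roots, the cross-term $t^{-\beta/2}\cdot t^{-\beta/2}=t^{-\beta}$ is the origin of the $t^{-\beta}e^{-\lambda_1 t/4}$ contribution, and similarly $t^{-\beta/2}\cdot 1=t^{-\beta/2}$ combined with $1\cdot e^{-(\lambda_1-L_G)t/2}$ produces the $(t^{-\beta/2}+1)e^{-(\lambda_1-L_G)t/2}$ term.

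The main obstacle is bookkeeping: making sure the splitting of the mild integral for $V_t$, together with the quadratic moment bound on $Y^{x,y,m}_t$ in the $\beta$-norm, yields precisely the prefactors $(t^{-\beta}+t^{-\beta/2})e^{-\lambda_1 t/4}+(t^{-\beta/2}+1)e^{-(\lambda_1-L_G)t/2}$ and the quadratic-in-$|x|,|y|$ dependence. A small but nontrivial point is uniformity in $m$: all semigroup estimates \eqref{P3}, the Lyapunov bound on $\mu^{x,m}$, and the stochastic-convolution estimate are $m$-independent because they rely on eigenvalue estimates and assumption $\ref{A4}$, so passing to $\sup_{m\geq 1}$ at the end is automatic.
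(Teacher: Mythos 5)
The paper does not prove this lemma itself; it only cites \cite[Proposition 3.3]{GSX}, so there is no in-text argument to compare against. Your synchronous-coupling proof (stationary copy $\tilde y\sim\mu^{x,m}$, $H$-contraction from $\lambda_1-L_G>0$, promotion to the $H^{\beta}$-norm via the mild formulation and the splitting at $t/2$, then Cauchy--Schwarz against the second $H^{\beta}$-moments) is correct, uniform in $m$, and reproduces exactly the stated prefactors, so it serves as a valid self-contained replacement for the citation; the only slip is that your intermediate bound on $\EE\|Y^{x,y,m}_t\|_\beta^2$ should carry a $|y|^2$ (not $|x|^2$) in the non-singular part, which is harmless for the final quadratic dependence on $|x|,|y|$.
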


Then we are in a position to study the regularity of its solution, which plays an important roles in the proof of our main result.
\begin{proposition}\label{P3.6}
Define
\begin{eqnarray}
\Phi^t_m(s,x,y):=\int^{\infty}_{0} D_x \tilde{u}^t_m(s, x)\cdot \left[F^m(x,Y^{x,y,m}_r)-\bar{F}^m(x)\right] dr.\label{SPE}
\end{eqnarray}
Then $\Phi^t_m(s,x,y)$ is a solution of equation \eref{WPE}. Moreover for any $t\in [0,T]$, $h,k\in H_m$ , there exist $C_T>0$ and small enough $\delta>0$ such that
\begin{eqnarray}
&&\sup_{m\geq 1}|\partial_s \Phi^t_m(s,x,y)|\leq C_{T}(t-s)^{-1+\alpha/2}e^{\delta |x|^2}\!(1+\|x\|_{\alpha})(1+|x|^2+|y|^2), s\in [0,t);\label{E120}\\
&&\sup_{s\in [0, t],x\in H_m,m\geq 1}|\Phi^t_m(s,x,y)|\leq C_T(1+|y|)e^{\delta|x|^2};\label{E121}\\
&&\sup_{s\in [0, t],x\in H_m,m\geq 1}|D_x \Phi^t_m(s,x,y)\cdot h|\leq C_{T}(1+|y|)e^{\delta |x|^2}|h|;\label{E122}\\
&&\sup_{s\in [0, t],x\in H_m,m\geq 1}|D_{xx}\Phi^t_m(s,x, y)\cdot(h,k)|\leq \!\!C_{T}(1+|y|)e^{\delta |x|^2}|h|\|k\|_{\tau}, \label{E221}
\end{eqnarray}
where $\tau$ and $\alpha$ are the constants in conditions \ref{A2} and \ref{A3} respectively.
\end{proposition}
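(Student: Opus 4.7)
The plan is to exploit the product structure of $\Phi^t_m$: since $D_x\tilde{u}^t_m(s,x)$ is independent of $r$ and $y$, it factors out of the integral and one obtains
$$\Phi^t_m(s,x,y) = \langle D_x\tilde{u}^t_m(s,x),\, \Psi_m(x,y)\rangle, \qquad \Psi_m(x,y) := \int_0^\infty \bigl[\EE F^m(x,Y^{x,y,m}_r) - \bar{F}^m(x)\bigr]\,dr,$$
where $\Psi_m$ is precisely the vector-valued Poisson-equation solution of \eref{SPE} and satisfies the bounds \eref{E0}, \eref{E2}, \eref{E3}. Because $\mathscr{L}^m_2(x)$ differentiates only in $y$ and $D_x\tilde{u}^t_m(s,x)$ is constant in $y$, the identity $-\mathscr{L}^m_2(x)\Psi_m(x,y) = F^m(x,y)-\bar{F}^m(x)$ pulls $D_x\tilde{u}^t_m$ outside the operator, giving $-\mathscr{L}^m_2(x)\Phi^t_m(s,x,y) = \langle D_x\tilde{u}^t_m(s,x),\, F^m(x,y)-\bar{F}^m(x)\rangle$, i.e.~\eref{WPE}.

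For the three spatial estimates \eref{E121}--\eref{E221}, my plan is to apply the product rule to $\langle D_x\tilde{u}^t_m,\Psi_m\rangle$ and estimate each resulting factor separately. The $j$-th $x$-derivative of $D_x\tilde{u}^t_m(s,\cdot)$ (for $0\le j\le 2$) is controlled by an $e^{\delta|x|^2}$-factor times a product of $H$-norms of its arguments, via \eref{UE3}--\eref{UE5}; the $(2-j)$-th $x$-derivative of $\Psi_m$ is controlled by $(1+|y|)$ times a product of norms via \eref{E0}, \eref{E2}, \eref{E3}. The $\|k\|_\tau$ factor in \eref{E221} enters naturally in the term $\langle D_x\tilde{u}^t_m, D_{xx}\Psi_m\cdot(h,k)\rangle$ via \eref{E3}, and is absorbed from $|k|$ in the remaining terms through the embedding $|k|\le C\|k\|_\tau$ (since $\tau>0$).

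The main obstacle is the temporal estimate \eref{E120}. Since $s$ enters only through $\tilde{u}^t_m$, differentiation gives $\partial_s\Phi^t_m(s,x,y) = \langle \partial_s D_x\tilde{u}^t_m(s,x),\, \Psi_m(x,y)\rangle$, and applying \eref{UE2} with $\theta=\alpha$ yields
$$|\partial_s\Phi^t_m(s,x,y)| \le C_T(t-s)^{-1+\alpha/2}\,(1+\|x\|_\alpha)\,e^{\delta|x|^2}\,\|\Psi_m(x,y)\|_\alpha.$$
Thus everything hinges on a uniform-in-$m$ bound of $\|\Psi_m(x,y)\|_\alpha$. Here I would invoke Lemma \ref{ergodicity in Hfinite} with $\varphi(\cdot)=F^m(x,\cdot)$, which satisfies the required $(H^\beta\to H^\alpha)$-Lipschitz hypothesis by assumption \ref{A3}. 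Estimate \eref{ergodicity2} then gives
$$\|\EE F^m(x,Y^{x,y,m}_r)-\bar{F}^m(x)\|_\alpha \le C\bigl[(r^{-\beta}+r^{-\beta/2})e^{-\lambda_1 r/4}+(r^{-\beta/2}+1)e^{-(\lambda_1-L_G)r/2}\bigr](1+|x|^2+|y|^2),$$
whose right-hand side is integrable in $r$ on $(0,\infty)$ since $\beta\in(0,1)$ (integrable singularity at $0$) and each exponential decays at infinity. Integrating produces $\|\Psi_m(x,y)\|_\alpha\le C(1+|x|^2+|y|^2)$, which combined with the display above yields \eref{E120}. Uniformity in $m$ is automatic throughout because every invoked estimate (Lemma \ref{ergodicity in Hfinite}, \eref{UE2}--\eref{UE5}, \eref{E0}--\eref{E3}) is already uniform in $m$.
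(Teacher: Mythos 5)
Your proposal is correct and follows essentially the same route as the paper: the paper likewise omits the proofs of \eref{E121}--\eref{E221} (delegating them to the product-rule argument via Lemma \ref{Lemma3.6} and the analogue of \eref{E0}--\eref{E3}, citing \cite{GSX}), and proves \eref{E120} exactly as you do, by applying \eref{UE2} with $\theta=\alpha$ to $\partial_s(D_x\tilde{u}^t_m)$ and controlling $\int_0^\infty\|\EE F^m(x,Y^{x,y,m}_r)-\bar F^m(x)\|_\alpha\,dr$ through Lemma \ref{ergodicity in Hfinite} together with condition \ref{A3}, the integrand being integrable since $\beta\in(0,1)$.
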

\begin{proof}
Note the results in Lemma \ref{Lemma3.6}, we omit the detailed proofs of \eref{E121}-\eref{E221} since it follows almost the same argument in the proof of \cite[Proposition 3.4]{GSX}. Thus we here only give the proofs of \eref{E120}.

In fact, by condition \ref{A3}, \eref{ergodicity2} and \eref{UE2}, it follows that
\begin{eqnarray*}
|\partial_s \Phi^t_m(s,x,y)|\leq\!\!\!\!\!\!\!\!&&\int^{\infty}_{0}\langle \EE F^m(x,Y^{x,y,m}_r)-\bar{F}^m(x), \partial_s(D_x \tilde{u}^t_m(s, x))\rangle dr\\
\leq\!\!\!\!\!\!\!\!&&C_T(t-s)^{-1+\alpha/2}e^{\delta |x|^2}(1+\|x\|_{\alpha})\int^{\infty}_{0}\|\EE F^m(x,Y^{x,y,m}_r)-\bar{F}^m(x)\|_{\alpha}dr\\
\leq\!\!\!\!\!\!\!\!&& C_T(t-s)^{-1+\alpha/2}e^{\delta |x|^2}(1+\|x\|_{\alpha})(1+|x|^2+|y|^2)\\
&&\quad\cdot\int^{\infty}_{0}\left[(r^{-\beta}+r^{-\beta/2})e^{-\frac{\lambda_{1}r}{4}}+(r^{-\frac{\beta}{2}}+1)e^{-\frac{(\lambda_1-L_G) r}{2}}\right]dr\\
\leq\!\!\!\!\!\!\!\!&& C_{T}(t-s)^{-1+\alpha/2}e^{\delta |x|^2}(1+\|x\|_{\alpha})(1+|x|^2+|y|^2).
\end{eqnarray*}
The proof is complete.

\end{proof}

\subsection{The proof of Theorem \ref{main result 2}}

\noindent
\begin{proof} By the discussion in subsection 2.2, we have
\begin{eqnarray*}
\sup_{t\in[0,T]}\left|\EE\phi(X^{m,\vare}_{t})-\EE\phi(\bar{X}^m_{t})\right|
\leq\!\!\!\!\!\!\!\!&&\vare\Bigg[\sup_{t\in [0,T]}|\Phi^t_m(0, x,y)|+\sup_{t\in[0,T]}\left|\EE\Phi^t_m(t, X_{t}^{m,\vare},Y^{m,\vare}_{t})\right|\nonumber\\
&&+\sup_{t\in [0,T]}\EE\int^t_0 \left|\partial_s \Phi^t_m(s, X_{s}^{m,\vare},Y^{m,\vare}_{s})\right|ds\nonumber\\
&&+\sup_{t\in [0,T]}\EE\int^t_0\left|\mathscr{L}^m_{1}(Y^{m,\vare}_{s})\Phi^t_m(s, X_{s}^{m,\vare},Y^{m,\vare}_{s})\right|ds\Bigg]\\
:=\!\!\!\!\!\!\!\!&&\vare\sum^4_{k=1}\Lambda^{m,\vare}_k(T).
\end{eqnarray*}

For the terms $\Lambda_1^{m,\varepsilon}(T)$ and  $\Lambda_2^{m,\varepsilon}(T)$. By estimates \eref{E121},  \eref{AYvare} and \eref{X11}, we have
\begin{eqnarray}
\Lambda^{m,\vare}_1(T)+\Lambda^{m,\vare}_2(T)\leq \!\!\!\!\!\!\!\!&&  C_{T}(1+|y|)e^{\delta|x|^2}+C_{T}\EE\left[(1+|y|+|Y^{m,\vare}_{t}|)e^{\delta|X^{m,\vare}_{t}|^2}\right]\nonumber\\
\leq \!\!\!\!\!\!\!\!&&C_T(1+|y|)e^{\delta|x|^2}.\label{Lambda12(T)}
\end{eqnarray}

For the term $\Lambda_3^{m,\varepsilon}(T)$. By estimates \eref{E120}, \eref{AYvare} and \eref{X11} , we have
\begin{eqnarray}
\Lambda^{m,\vare}_3(T)\leq\!\!\!\!\!\!\!\!&& C_{T}\sup_{t\in[0,T]}\int^t_0(t-s)^{-1+\alpha/2}\EE\left[(1+|X^{m,\vare}_{s}|^2+|Y^{m,\vare}_{s}|^2)(1+\|X^{m,\vare}_{s}\|_{\alpha})e^{\delta |X^{m,\vare}_{s}|^2}\right]ds\nonumber\\
\leq\!\!\!\!\!\!\!\!&&C_T\sup_{s\in [0,T]}\left[\EE(1+|X^{m,\vare}_{s}|^8+|Y^{m,\vare}_{s}|^{8})\right]^{\frac{1}{4}}\sup_{s\in [0,T]}\left[\EE e^{4\delta |X^{m,\vare}_{s}|^2}\right]^{\frac{1}{4}}\nonumber\\
&& \cdot \sup_{t\in[0,T]}\int^t_0(t-s)^{-1+\alpha/2}\left[\EE(1+\|X^{m,\vare}_{s}\|^{2}_{\alpha})\right]^{\frac{1}{2}}ds\nonumber\\
\leq\!\!\!\!\!\!\!\!&&C_T(1+|x|^2+|y|^2)e^{\delta|x|^2}\sup_{t\in[0,T]}\int^t_0(t-s)^{-1+\alpha/2}s^{-\alpha/2}ds\nonumber\\
\leq\!\!\!\!\!\!\!\!&&C_T(1+|x|^2+|y|^2)e^{\delta|x|^2}.\label{LambdA2(T)}
\end{eqnarray}

For the term $\Lambda^{m,\vare}_4(T)$. It is easy to see
\begin{eqnarray}
\Lambda^{m,\vare}_4(T)\leq\!\!\!\!\!\!\!\!&&\sup_{t\in[0,T]}\int^t_0 \EE\left|D_x\Phi^t_m(X_{s}^{m,\vare},Y^{m,\vare}_{s})\cdot \left[AX_{s}^{m,\vare}+B(X_{s}^{m,\vare})\right] \right|ds\nonumber\\
&&+\sup_{t\in[0,T]}\int^t_0 \EE\left|D_x\Phi^t_m(X_{s}^{m,\vare},Y^{m,\vare}_{s})\cdot F^m(X_{s}^{m,\vare},Y^{m,\vare}_{s}) \right|ds\nonumber\\
&&+\sup_{t\in[0,T]}\int^t_0\EE\left|\frac{1}{2}\sum_{k=1}^m \left[D_{xx}\Phi_m^t(X_{s}^{m,\vare},Y^{m,\vare}_{s})\cdot\left(\sqrt{Q_1}e_k,\sqrt{Q_1}e_k\right)\right]\right|ds\nonumber\\
:=\!\!\!\!\!\!\!\!&&\sum^3_{i=1}\Lambda_{4i}^{m,\varepsilon}(T).\label{LambdA2(T)}
\end{eqnarray}
By \eref{E122}, \eref{AYvare}-\eref{Xvare2}, we get
\begin{eqnarray}
\Lambda^{m,\vare}_{41}(T)\leq\!\!\!\!\!\!\!\!&&C_{T}\sup_{t\in[0,T]}\EE\int^t_0 (\|X_{s}^{m,\vare}\|_2+|B(X_{s}^{m,\vare})|)e^{\delta|X_{s}^{m,\vare}|^2}(1+|Y^{m,\vare}_{s}|)ds\nonumber\\
\leq\!\!\!\!\!\!\!\!&&C_{T}\sup_{t\in[0,T]}\int^t_0 \left[\EE(\|X_{s}^{m,\vare}\|^2_2+\|X_{s}^{m,\vare}\|^2_{3/2-\gamma}\|X_{s}^{m,\vare}\|^2_{\gamma})\right]^{1/2}\nonumber\\
&&\quad\quad\quad\quad\quad\cdot \left[\EE(1+|Y^{m,\vare}_{s}|^4)\right]^{1/4}\left[\EE e^{4\delta|X_{s}^{m,\vare}|^2}\right]^{1/4}ds\nonumber\\
\leq\!\!\!\!\!\!\!\!&&C_{T}(1+\|x\|_{\theta}+|y|)e^{\delta|x|^2}\sup_{t\in [0,T]}\int^t_0 s^{-1+\frac{\theta}{2}}+s^{\frac{\alpha-\beta}{2}}ds\nonumber\\
\leq\!\!\!\!\!\!\!\!&&C_{T}(1+\|x\|_{\theta}+|y|)e^{\delta|x|^2}.
\end{eqnarray}
By  \eref{E122} and \eref{E221}, we have
\begin{eqnarray}
\Lambda^{m,\vare}_{42}(T)\leq\!\!\!\!\!\!\!\!&&C_{T}\int^T_0\left[ \EE(1+|X_{s}^{m,\vare}|^2+|Y^{m,\vare}_{s}|^2)\right]^{1/2}\left[\EE e^{2\delta|X_{s}^{m,\vare}|^2}\right]^{1/2}ds\nonumber\\
\leq\!\!\!\!\!\!\!\!&&C_{T}(1+|x|^{2}+|y|^{2})e^{\delta|x|^2}.
\end{eqnarray}
and
\begin{eqnarray}
\Lambda^{m,\vare}_{43}(T)\leq\!\!\!\!\!\!\!\!&&C_T  \sum^{\infty}_{k=1}(\alpha_{1,k}\lambda^{\tau/2}_k)\int^T_0\left[\EE(1+|Y^{m,\vare}_s|^2)\right]^{1/2}\left[\EE e^{2\delta|X_{s}^{m,\vare}|^2}\right]^{1/2}ds\nonumber\\
\leq\!\!\!\!\!\!\!\!&&C_T(1+|x|^2+|y|^2)e^{\delta|x|^2}.
\label{F5.22}
\end{eqnarray}

Finally, combining estimates \eref{Lambda12(T)}-\eref{F5.22}, we get
\begin{eqnarray*}
\sup_{t\in[0,T],m\geq 1}\left|\EE\phi(X^{m,\vare}_{t})-\EE\phi(\bar{X}^m_{t})\right|\leq C \vare,
\end{eqnarray*}
where $C$ is a positive constant only depends on $T$, $x$, $\|x\|_{\theta}$ and $|y|$. The proof is complete.

\end{proof}
\section{Appendix}

In this section, we give some properties of the $b$ and $B$ (see \cite{DX}).
\begin{lemma} \label{Property B0}
For any $x, y \in H^1_0(0, 1)$, it holds that
$$ b(x,x,y)=-\frac12b(x,y,x),\quad b(x,x,x)=0.$$
\end{lemma}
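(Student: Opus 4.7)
The plan is to exploit the one-dimensional algebraic identity $x \partial_\xi x = \tfrac{1}{2}\partial_\xi(x^2)$ and then integrate by parts, using that every element of $H^1_0(0,1)$ vanishes at the endpoints $\xi=0$ and $\xi=1$. This is the standard fluid-dynamics trick that makes the convective term skew-symmetric.

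For the first identity, I would rewrite
\[
b(x,x,y) = \int_0^1 x(\xi)\,\partial_\xi x(\xi)\,y(\xi)\,d\xi = \frac12 \int_0^1 \partial_\xi\bigl(x(\xi)^2\bigr)\,y(\xi)\,d\xi,
\]
and then integrate by parts. Because $x,y \in H^1_0(0,1)$, the one-dimensional Sobolev embedding $H^1(0,1) \hookrightarrow C([0,1])$ gives that $x$ and $y$ are continuous with $x(0)=x(1)=y(0)=y(1)=0$, so the boundary contribution $\tfrac12[x^2 y]_0^1$ vanishes. What remains is $-\tfrac12\int_0^1 x(\xi)^2 \partial_\xi y(\xi)\,d\xi = -\tfrac12 b(x,y,x)$, which is exactly the first claim.

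For the second identity, the quickest route is to specialize $y = x$ in the first identity, obtaining $b(x,x,x) = -\tfrac12 b(x,x,x)$, whence $\tfrac32 b(x,x,x)=0$ and so $b(x,x,x)=0$. Alternatively one can argue directly via $x^2 \partial_\xi x = \tfrac13 \partial_\xi(x^3)$ and the vanishing of $x^3$ at the endpoints.

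There is essentially no obstacle; the only point deserving a brief remark is the legitimacy of the integration by parts at the $H^1_0$ regularity level, which is standard since $x^2 \in W^{1,1}(0,1)$ with $\partial_\xi(x^2)=2x\partial_\xi x \in L^1(0,1)$ by Cauchy--Schwarz, and the trace of $x^2$ at the endpoints coincides with $x(0)^2 = x(1)^2 = 0$. Hence the full proof fits in a few lines.
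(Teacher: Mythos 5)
Your proof is correct: the identity $x\partial_\xi x=\tfrac12\partial_\xi(x^2)$ plus integration by parts with vanishing boundary terms (justified by $H^1_0(0,1)\hookrightarrow C([0,1])$ and $x^2\in W^{1,1}(0,1)$) is exactly the standard argument, and setting $y=x$ to get $b(x,x,x)=-\tfrac12 b(x,x,x)$ cleanly yields the second claim. The paper itself gives no proof here, simply citing \cite{DX}, and your argument is precisely the one that reference (and the fluid-dynamics literature generally) uses.
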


\begin{lemma} \label{Property B1}
Suppose that $\alpha_{i}\geq 0~(i=1,2,3)$
satisfies one of the following conditions \\
$(1) ~\alpha_{i}\neq\frac{1}{2}(i=1,2,3), \alpha_{1}+\alpha_{2}+\alpha_{3}\geq \frac{1}{2}$, \\
$(2) ~\alpha_{i}=\frac{1}{2}$ for some $i$, $\alpha_{1}+\alpha_{2}+\alpha_{3}>\frac{1}{2}$,\\
then $b$ is continuous from $H^{\alpha_{1}}(0,1)\times H^{\alpha_{2}+1}(0,1)\times H^{\alpha_{3}}(0,1)$ to $\mathbb{R}$, i.e.
$$\big|b(x,y,z)\big|\leq C\|x\|_{\alpha_{1}}\|y\|_{\alpha_{2}+1}\|z\|_{\alpha_{3}}.$$
\end{lemma}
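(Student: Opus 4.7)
The plan is to estimate $b(x,y,z)=\int_0^1 x(\xi)\,(\partial_\xi y)(\xi)\,z(\xi)\,d\xi$ directly by H\"older's inequality
\begin{equation*}
|b(x,y,z)| \le \|x\|_{L^{p_1}(0,1)}\,\|\partial_\xi y\|_{L^{p_2}(0,1)}\,\|z\|_{L^{p_3}(0,1)},
\qquad \frac{1}{p_1}+\frac{1}{p_2}+\frac{1}{p_3}=1,
\end{equation*}
and to feed each factor through the one-dimensional Sobolev embeddings on $(0,1)$. Since $\partial_\xi\colon H^{\alpha_2+1}\to H^{\alpha_2}$ continuously, it suffices to match each exponent $p_i$ to the embedding properties of $H^{\alpha_i}$: namely $H^{s}\hookrightarrow L^{2/(1-2s)}$ when $0\le s<1/2$, $H^{s}\hookrightarrow L^{q}$ for every $q<\infty$ when $s=1/2$, and $H^{s}\hookrightarrow L^{\infty}$ when $s>1/2$.

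First I would handle the \emph{generic} case in which all three exponents satisfy $\alpha_i<1/2$. Choose $1/p_i=1/2-\alpha_i$ for $i=1,2,3$; then $1/p_1+1/p_2+1/p_3=3/2-(\alpha_1+\alpha_2+\alpha_3)\le 1$ precisely when $\alpha_1+\alpha_2+\alpha_3\ge 1/2$, which is condition (1). Using H\"older's inequality on the interval (possibly with a harmless enlargement of one $p_i$ to restore equality in the sum of reciprocals, which only improves constants on a bounded domain) together with the three embeddings $\|x\|_{L^{p_1}}\le C\|x\|_{\alpha_1}$, $\|\partial_\xi y\|_{L^{p_2}}\le C\|\partial_\xi y\|_{\alpha_2}\le C\|y\|_{\alpha_2+1}$, and $\|z\|_{L^{p_3}}\le C\|z\|_{\alpha_3}$, the desired bound follows immediately.

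Next I would treat the case in which some $\alpha_i>1/2$: for that index put $p_i=\infty$ and use $\|\cdot\|_{L^\infty}\le C\|\cdot\|_{\alpha_i}$, leaving the two remaining factors to be estimated as above with $1/p_j+1/p_k=1$; the inequality $\alpha_1+\alpha_2+\alpha_3\ge 1/2$ is then trivially satisfied and the argument goes through without change. The delicate part, and the main obstacle, is the boundary case $\alpha_i=1/2$ for some $i$ covered by hypothesis (2): the embedding $H^{1/2}\hookrightarrow L^\infty$ fails, so I cannot simply send that $p_i$ to $\infty$. Instead I exploit $H^{1/2}\hookrightarrow L^{q}$ for every finite $q$; the strict inequality $\alpha_1+\alpha_2+\alpha_3>1/2$ provides a positive slack that lets me pick $q$ large enough (equivalently, perturb $\alpha_i=1/2$ down to some $\alpha_i'<1/2$ and re-run the generic argument) while still satisfying $1/p_1+1/p_2+1/p_3\le 1$.

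Finally, I would note that Lemma~\ref{Property B0} is not needed for this estimate, and that the statement $B(x,y)=x\cdot\partial_\xi y$ combined with the above bound yields, as a direct consequence, the mapping estimates on $B$ used throughout Sections~3--5 (for instance $\|B(x,y)\|_{-\alpha_3}\le C\|x\|_{\alpha_1}\|y\|_{\alpha_2+1}$ by duality). The proof is therefore a short case analysis driven entirely by the dimension-one Sobolev embedding table and H\"older's inequality; no compactness, interpolation, or paraproduct machinery is required.
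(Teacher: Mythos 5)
Your proof is correct: the paper itself gives no proof of this lemma, merely citing \cite{DX}, and the argument used there is exactly the H\"older-plus-one-dimensional-Sobolev-embedding case analysis you describe, with the strict inequality in case (2) compensating for the failure of $H^{1/2}\hookrightarrow L^{\infty}$. One trivial slip: to restore equality $\sum_i 1/p_i=1$ when the sum falls below $1$ you must \emph{shrink} a $p_i$ (enlarge $1/p_i$), or equivalently just invoke H\"older with $\sum_i 1/p_i\leq 1$ on the bounded interval; this does not affect the argument.
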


By Lemmas \ref{Property B0} and \ref{Property B1}, it is easy check that
\begin{lemma} \label{Property B3}
For any $x, y \in H^1_0(0, 1)$, it holds that for any $\gamma\in(1/2,1)$,
\begin{eqnarray}
&&|B(x)|\leq C\|x\|_{1-\gamma}\|x\|_{1/2+\gamma},\label{BP1}\\
&&\|B(x)-B(y)\|_{-1}\leq C(\|x\|_{\gamma}+\|y\|_{\gamma})|x-y|.\label{BP2}
\end{eqnarray}
\end{lemma}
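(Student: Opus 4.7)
My plan has two independent pieces, one per inequality, and both are rather short.

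For \eref{BP1}, I would argue by $L^2$-duality. Since $B(x) \in H$ (at least formally, and rigorously under the stated regularity), we have
$$|B(x)| = \sup_{z \in H,\, |z|=1} \langle B(x), z \rangle = \sup_{|z|=1} b(x,x,z).$$
I then apply Lemma \ref{Property B1} with the triple $\alpha_1 = 1-\gamma$, $\alpha_2 = \gamma - 1/2$, $\alpha_3 = 0$. Since $\gamma \in (1/2,1)$, one checks $\alpha_1, \alpha_2 \in (0,1/2)$ and $\alpha_3 = 0$, so none equal $1/2$; their sum is exactly $1/2$, placing us in case (1) of the lemma. This yields $|b(x,x,z)| \leq C \|x\|_{1-\gamma} \|x\|_{1/2+\gamma} |z|$, and taking the supremum over unit $z$ gives \eref{BP1}. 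The slight delicacy is making sure we land outside the excluded regime where some $\alpha_i = 1/2$, which is precisely why $\gamma$ is restricted to the open interval $(1/2,1)$.

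For \eref{BP2}, the main algebraic observation is that $B(x) = \frac{1}{2}\partial_\xi(x^2)$, so
$$B(x) - B(y) = \tfrac{1}{2}\partial_\xi\bigl((x-y)(x+y)\bigr).$$
I would then use the duality between $H^{-1}$ and $H^1_0$: for any $z \in H^1_0(0,1)$, integrating by parts (boundary terms vanish because $z(0)=z(1)=0$) gives
$$\langle B(x) - B(y), z\rangle = -\tfrac{1}{2}\int_0^1 (x-y)(x+y)\,\partial_\xi z\, d\xi.$$
Now I apply H\"older with the triple $(L^2, L^\infty, L^2)$ together with the one-dimensional Sobolev embedding $H^\gamma(0,1) \hookrightarrow L^\infty(0,1)$, which is valid precisely because $\gamma > 1/2$. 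This yields
$$|\langle B(x) - B(y), z\rangle| \leq C\,|x-y|\,\|x+y\|_{L^\infty}\,\|z\|_1 \leq C\,(\|x\|_\gamma + \|y\|_\gamma)\,|x-y|\,\|z\|_1.$$
Taking the supremum over $z$ with $\|z\|_1 = 1$ produces \eref{BP2}.

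Neither step is genuinely hard; the only real choice is where to place the derivative. In \eref{BP1}, putting it on the middle slot of $b$ and balancing the three regularity indices against the constraint $\alpha_1 + \alpha_2 + \alpha_3 \geq 1/2$ of Lemma \ref{Property B1} is what drives the specific exponents $1-\gamma$ and $1/2+\gamma$. In \eref{BP2}, the trick of rewriting $B(x)-B(y)$ as a single perfect derivative is crucial, because it lets us integrate the derivative onto the test function $z \in H^1_0$ (whose derivative we control) and handle $x-y$ in $L^2$ while absorbing the sum $x+y$ in $L^\infty$ via Sobolev embedding. The restriction $\gamma > 1/2$ appears in both parts for the same underlying reason: it is the threshold for the critical Sobolev embedding in one dimension.
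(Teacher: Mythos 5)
Your argument is correct and fills in exactly the route the paper intends: the paper gives no details, merely asserting that Lemma \ref{Property B3} follows from Lemmas \ref{Property B0} and \ref{Property B1}, and your choice $\alpha_1=1-\gamma$, $\alpha_2=\gamma-1/2$, $\alpha_3=0$ for \eref{BP1} together with the perfect-derivative/integration-by-parts/Sobolev-embedding step for \eref{BP2} (equivalent to applying Lemma \ref{Property B1} with indices $(\gamma,0,0)$ after moving the derivative onto $z$) is precisely the intended verification.
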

At the end of this section, we give the classical Gronwall inequality and a Gronwall-type inequality.

\begin{lemma}[Gronwall inequality] \label{Gronwall 1}
Let $\alpha, \beta$ be real-value functions defined on $[0, T]$, assume that $\beta$ and $u$ are continuous and that the negative part of $\alpha$ is integrable on every closed and bounded subinterval of $[0, T]$. \\
(a) If $\beta$ is non-negative and if $u$ satisfies the integral inequality
\begin{eqnarray*}
u_t\leq \alpha_t+\int^t_0\beta_s u_sds, \quad \forall t\in [0, T],
\end{eqnarray*}
then
$$
u_t\leq \alpha_t+\int^t_0 \alpha_s \beta_s\exp{\left(\int^t_s \beta_rdr\right)}ds,\quad \forall t\in [0, T].
$$
(b) If, in addition, the function $\alpha$ is nondecreasing, then
$$
u_t\leq \alpha_t\exp{\left(\int^t_0 \beta_rdr\right)},\quad \forall t\in [0, T].
$$
\end{lemma}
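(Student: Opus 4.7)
The statement to prove is the classical Gronwall inequality, so my plan is the standard integrating-factor argument, applied to an auxiliary function that isolates the integral term.

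For part (a), the plan is to set $v(t) := \int_0^t \beta_s u_s\,ds$, which is differentiable with $v(0)=0$ and $v'(t) = \beta_t u_t$. The hypothesis $u_t \le \alpha_t + v(t)$, together with $\beta \ge 0$, gives the linear differential inequality
\[
v'(t) - \beta_t v(t) \le \beta_t \alpha_t.
\]
Multiplying by the integrating factor $\exp\!\bigl(-\int_0^t \beta_r\,dr\bigr)$ turns the left-hand side into the derivative of $v(t)\exp\!\bigl(-\int_0^t \beta_r\,dr\bigr)$. I will then integrate from $0$ to $t$ (using $v(0)=0$) and multiply through by $\exp\!\bigl(\int_0^t \beta_r\,dr\bigr)$ to obtain
\[
v(t) \le \int_0^t \alpha_s \beta_s \exp\!\left(\int_s^t \beta_r\,dr\right) ds.
\]
Substituting back into $u_t \le \alpha_t + v(t)$ yields the claimed inequality of part (a). The integrability assumption on the negative part of $\alpha$ is exactly what is needed to make the integral defining $v$ and the resulting bound well-defined.

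For part (b), I would start from the conclusion of (a) and use monotonicity: for $s \le t$, $\alpha_s \le \alpha_t$, so
\[
u_t \le \alpha_t + \alpha_t \int_0^t \beta_s \exp\!\left(\int_s^t \beta_r\,dr\right) ds.
\]
The remaining integral telescopes, since $\beta_s \exp\!\bigl(\int_s^t \beta_r\,dr\bigr) = -\frac{d}{ds}\exp\!\bigl(\int_s^t \beta_r\,dr\bigr)$, so it equals $\exp\!\bigl(\int_0^t \beta_r\,dr\bigr)-1$. Adding the leading $\alpha_t$ gives the exponential bound.

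There is no real obstacle here, as this is a textbook result; the only points requiring a little care are the regularity of $v$ (ensured by continuity of $\beta$ and $u$, combined with the integrability assumption on $\alpha^{-}$), and justifying the telescoping identity in (b), which is valid for any $\beta$ whose integral is finite. I would present the argument cleanly, deriving (a) first and then deducing (b) as a direct corollary, rather than repeating the integrating-factor computation.
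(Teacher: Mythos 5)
Your proposal is correct: the integrating-factor argument for part (a) and the monotonicity-plus-telescoping deduction of part (b) are the standard textbook proof, and both steps (including the sign considerations when $\alpha_t$ may be negative, since $\beta_s \exp\bigl(\int_s^t \beta_r\,dr\bigr)\geq 0$) go through as you describe. Note that the paper itself states this lemma as a classical result in the Appendix without giving any proof, so there is nothing to compare against; your argument fills that in correctly.
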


\begin{lemma}[Gronwall-type inequality] \label{Gronwall 2}
Let $f(t)$ be a non-negative real-valued integrable function on $[0,T]$.
For any given $\alpha, \beta \in[0,1)$,
if there exist two positive constants $C_1, C_2$ such that
\begin{align}
f(t)\leq C_1t^{-\alpha}+C_2\int_{0}^{t}(t-s)^{-\beta}f(s)ds, \quad \forall t\in[0,T], \nonumber
\end{align}
then there exists some $k\in \mathbb{N}$ and a positive constant $C:=C_{\alpha, \beta, T}$
such that
\begin{align}
f(t)\leq C C_1t^{-\alpha}e^{C C^{k}_2}, \quad \forall t\in[0,T].  \nonumber
\end{align}
\end{lemma}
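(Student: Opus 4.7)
I plan to prove this singular Gronwall inequality by iterating the integral bound until the convolution kernel loses its singularity, then closing with the classical Gronwall inequality of Lemma \ref{Gronwall 1}. Set $a(t) := C_1 t^{-\alpha}$ and define the positive linear operator $(Tg)(t) := C_2 \int_0^t (t-s)^{-\beta} g(s)\,ds$, so the hypothesis reads $f \leq a + Tf$. Iterating this bound $N$ times (using monotonicity of $T$ on non-negative functions) produces
\[
f(t) \leq \sum_{j=0}^{N-1} (T^j a)(t) + (T^N f)(t).
\]

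Next, I would compute $T^j$ explicitly via Beta functions. By induction on $j \geq 1$, using Fubini's theorem together with the identity $\int_r^t (t-s)^{-\beta}(s-r)^{j(1-\beta)-1}\,ds = B(1-\beta,\,j(1-\beta))\,(t-r)^{(j+1)(1-\beta)-1}$, one obtains
\[
(T^j g)(t) = \frac{[C_2\,\Gamma(1-\beta)]^j}{\Gamma(j(1-\beta))} \int_0^t (t-s)^{j(1-\beta)-1} g(s)\,ds.
\]
Specializing to $g(s) = C_1 s^{-\alpha}$ and computing $\int_0^t (t-s)^{j(1-\beta)-1} s^{-\alpha}\,ds = B(j(1-\beta),1-\alpha)\,t^{j(1-\beta)-\alpha}$ (via the substitution $s = tu$) yields
\[
(T^j a)(t) = \frac{C_1\,\Gamma(1-\alpha)\,[C_2\,\Gamma(1-\beta)]^j}{\Gamma(j(1-\beta)+1-\alpha)}\,t^{j(1-\beta)-\alpha}.
\]
For $t \in (0,T]$ one bounds $t^{j(1-\beta)-\alpha} \leq T^{j(1-\beta)}\,t^{-\alpha}$, cleanly separating the $t^{-\alpha}$ singularity from factors depending only on $C_2,T,\alpha,\beta,j$.

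The pivotal choice is then $N := \lceil 1/(1-\beta) \rceil$, so that $N(1-\beta) \geq 1$ and the iterated kernel $(t-s)^{N(1-\beta)-1}$ is bounded on $[0,T]^2$ by $T^{N(1-\beta)-1}$. Writing $\tilde a(t) := \sum_{j=0}^{N-1}(T^j a)(t)$, the previous step gives $\tilde a(t) \leq C_\ast C_1 t^{-\alpha}(1 + C_2^{N-1})$ with $C_\ast = C_\ast(\alpha,\beta,T,N)$, while the remainder satisfies $(T^N f)(t) \leq K_N C_2^N \int_0^t f(s)\,ds$ with $K_N := \Gamma(1-\beta)^N T^{N(1-\beta)-1}/\Gamma(N(1-\beta))$. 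Combining,
\[
f(t) \leq \tilde a(t) + K_N C_2^N \int_0^t f(s)\,ds.
\]

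Finally, applying Lemma \ref{Gronwall 1} to this inequality (legitimate since $\tilde a$ is integrable on $[0,T]$ as $\alpha < 1$) yields
\[
f(t) \leq \tilde a(t) + K_N C_2^N \int_0^t \tilde a(s)\,e^{K_N C_2^N (t-s)}\,ds.
\]
The first summand is $\leq C_\ast C_1 t^{-\alpha}(1+C_2^{N-1})$; the second is bounded by $C' C_1 (1+C_2^{N-1}) C_2^N e^{K_N C_2^N T}$ via $\int_0^t s^{-\alpha}\,ds \leq T^{1-\alpha}/(1-\alpha)$. Since $t^{-\alpha} \geq T^{-\alpha}$ on $(0,T]$, that constant can be absorbed into a factor of $t^{-\alpha}$, and since $1+x \leq e^x$ the polynomial prefactors in $C_2$ collapse into a single exponential $e^{C C_2^N}$. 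This produces $f(t) \leq C C_1 t^{-\alpha} e^{C C_2^N}$ with $k := N = \lceil 1/(1-\beta)\rceil$, as claimed. The main technical obstacle is the Beta-function bookkeeping in the induction for $T^j$, together with verifying that $N(1-\beta) \geq 1$ is simultaneously sufficient to kill the singular kernel and compatible with a polynomial (in $C_2$) bound on the accumulated $\tilde a$-terms.
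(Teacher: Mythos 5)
Your proof is correct. The paper states this lemma in the Appendix without proof, so there is nothing to compare against; your argument is the standard Henry-type iteration (iterate the singular kernel $N=\lceil 1/(1-\beta)\rceil$ times via the Beta-function identity until it becomes bounded, then close with the classical Gronwall inequality), and the Beta/Gamma bookkeeping, the bound $t^{j(1-\beta)-\alpha}\leq T^{j(1-\beta)}t^{-\alpha}$, and the final absorption of the polynomial factors in $C_2$ into $e^{CC_2^N}$ all check out. The only cosmetic gap is that Lemma \ref{Gronwall 1} as stated assumes $u$ continuous while your $f$ is merely integrable; the constant-kernel Gronwall step should be justified by direct iteration (which works for integrable $f$) rather than by citing that lemma verbatim.
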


\vspace{0.3cm}
For the Galerkin approximation \eref{Ga mainE} of system \eref{main equation}, we have the following approximation.

\begin{lemma} \label{GA1}
For any $\vare>0, (x,y)\in H\times H$, we have
\begin{eqnarray}
\lim_{m\rightarrow \infty}\EE\left(\sup_{t\in[0,T]}|X^{m,\vare}_t-X^{\vare}_t|^p\right)=0,\label{FA1}
\end{eqnarray}
\begin{align}
\lim_{m\rightarrow \infty}\EE\left(\sup_{t\in[0,T]}|\bar{X}^{m}_t-\bar{X}_t|^p\right)=0. \label{FA2}
\end{align}
\end{lemma}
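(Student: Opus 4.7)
The plan is to reduce each limit to a pathwise Gronwall-type estimate by exploiting that the truncated noise is exactly the projection of the full noise. Since the basis $\{e_k\}$ diagonalises both $A$ and $Q_i$, the projection $\pi_m$ commutes with $e^{tA}$ and $\sqrt{Q_i}$, and $\bar W^{i,m} = \pi_m W^i$. Hence $\pi_m X^{\vare}_t$ satisfies the same mild equation as $X^{m,\vare}_t$ in \eref{Ga mainE}, except that $B$ and $F$ are evaluated at the un-projected processes $X^\vare$, $Y^\vare$; in particular the two stochastic integrals $\int_0^t e^{(t-s)A}\sqrt{Q_1}\, d\bar W^{1,m}_s$ are identical. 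Setting $W^{m,\vare}_t := X^{m,\vare}_t - \pi_m X^\vare_t$ and $U^{m,\vare}_t := Y^{m,\vare}_t - \pi_m Y^\vare_t$, both starting at $0$, the stochastic parts cancel and the pair satisfies a coupled system of random ODEs in $H_m$ driven by the nonlinear discrepancies together with the projection residues $(I - \pi_m)B(X^\vare)$, $(I - \pi_m)F(X^\vare, Y^\vare)$, $(I - \pi_m)G(X^\vare, Y^\vare)$, all of which tend to $0$ pathwise as $m\to\infty$ by strong convergence of $\pi_m$ to $I$.

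For the fast difference $U^{m,\vare}_t$, pairing the ODE with itself and using \eref{LG} together with the strong dissipativity $\lambda_1 - L_G > 0$ yields
\[
\frac{d}{dt}|U^{m,\vare}_t|^2 + \frac{\lambda_1 - L_G}{\vare}|U^{m,\vare}_t|^2 \leq \frac{C}{\vare}\left(|W^{m,\vare}_t|^2 + |(I-\pi_m)X^\vare_t|^2 + |(I-\pi_m)Y^\vare_t|^2\right).
\]
After Gronwall, the exponential kernel $\tfrac{1}{\vare}e^{-(\lambda_1-L_G)(t-s)/\vare}$ has $L^1(ds)$-norm uniform in $\vare$, so $\EE\sup_{t\leq T}|U^{m,\vare}_t|^2 \leq C\,\EE\int_0^T|W^{m,\vare}_s|^2\,ds + \kappa_m$, where $\kappa_m \to 0$ by dominated convergence, with Lemma \ref{L6.1} supplying the integrable envelope.

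For the slow difference the Burgers term is where the real work lies. Testing the ODE for $W^{m,\vare}_t$ against itself, and using $W^{m,\vare}_t \in H_m$ to drop the outer $\pi_m$ from the nonlinear part, one decomposes $B(X^{m,\vare}_t) - B(X^\vare_t) = B(X^{m,\vare}_t - X^\vare_t, X^{m,\vare}_t) + B(X^\vare_t, X^{m,\vare}_t - X^\vare_t)$ and writes $X^{m,\vare}_t - X^\vare_t = W^{m,\vare}_t - (I - \pi_m)X^\vare_t$. The dangerous diagonal terms $b(W^{m,\vare}_t, X^{m,\vare}_t, W^{m,\vare}_t)$ and $b(X^\vare_t, W^{m,\vare}_t, W^{m,\vare}_t)$ are both bounded by $C(\|X^{m,\vare}_t\|_1 + \|X^\vare_t\|_1)\|W^{m,\vare}_t\|_{L^4}^2$, and the one-dimensional Gagliardo-Nirenberg inequality $\|W^{m,\vare}_t\|_{L^4}^2 \leq C|W^{m,\vare}_t|^{3/2}\|W^{m,\vare}_t\|_1^{1/2}$ combined with Young's inequality absorbs half of the dissipation $\|W^{m,\vare}_t\|_1^2$ coming from $-\langle AW^{m,\vare}_t, W^{m,\vare}_t\rangle$. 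The cross terms involving $(I-\pi_m)X^\vare_t$ are controlled via Lemma \ref{Property B1} and Young, and the $F$-discrepancy via \eref{LF}, producing an overall projection residue $r_m(t)$ tending to $0$.

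The main obstacle is closing the resulting coupled inequality
\[
\frac{d}{dt}|W^{m,\vare}_t|^2 + \|W^{m,\vare}_t\|_1^2 \leq C\left(1 + \|X^{m,\vare}_t\|_1^{4/3} + \|X^\vare_t\|_1^{4/3}\right)|W^{m,\vare}_t|^2 + C|U^{m,\vare}_t|^2 + r_m(t),
\]
whose Gronwall multiplier $\exp\bigl(C\int_0^T(\|X^{m,\vare}_s\|_1^{4/3} + \|X^\vare_s\|_1^{4/3})\,ds\bigr)$ must have finite $p$-th moment uniformly in $m$. Young's inequality gives $\|x\|_1^{4/3} \leq C_\delta + \delta\|x\|_1^2$ for arbitrarily small $\delta > 0$, so this multiplier lies within the admissible exponent range of the exponential estimate \eref{X11} (and its $m\to\infty$ analogue for $X^\vare$), whose admissible range is precisely $\delta < (\lambda_1 - 2L_F)/(4\|Q_1\|)$. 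Plugging the fast estimate into the slow one and applying stochastic Gronwall yields $\EE\sup_{t\leq T}|W^{m,\vare}_t|^p \to 0$, which together with $\sup_t|(I-\pi_m)X^\vare_t|^p \to 0$ in $L^p$ (dominated convergence using Lemma \ref{L6.1}) and the triangle inequality gives \eref{FA1}. Statement \eref{FA2} follows from the same scheme with $Y$ absent and $\bar F$ in place of $F$, using \eref{A.1.1} as the exponential moment estimate.
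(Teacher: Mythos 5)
Your route is genuinely different from the paper's. The paper works with the mild formulations of $X^{m,\vare}_t-X^{\vare}_t$ and $Y^{m,\vare}_t-Y^{\vare}_t$, bounds the Burgers discrepancy by the smoothing estimate $\big|e^{(t-s)A}[B^m(X^{m,\vare}_s)-B^m(X^{\vare}_s)]\big|\leq C(t-s)^{-3/4}(|X^{m,\vare}_s|+|X^{\vare}_s|)\,|X^{m,\vare}_s-X^{\vare}_s|$ via the embedding $W^{1/2,1}\hookrightarrow L^2$, localizes with the stopping time $\tau_k=\inf\{t:|X^{m,\vare}_t|+|X^{\vare}_t|\geq k\}$ so that Gronwall applies with a constant $C_{k,T}$, sends the projection residues and the stochastic-convolution differences to zero by dominated convergence, and finally removes $\tau_k$ by Chebyshev using only the $H$-moment bounds of Lemma \ref{L6.1}. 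Your comparison with $\pi_m X^{\vare}_t$ (so that the noise cancels exactly), the energy identity in $H_m$, and the Gagliardo--Nirenberg/Young absorption of the diagonal trilinear terms into the $H^1$ dissipation is a legitimate alternative, and the observation that $\bar W^{i,m}=\pi_m W^i$ commutes through $e^{tA}\sqrt{Q_i}$ is a nice simplification that the paper does not exploit. The price is that your Gronwall multiplier involves $\exp\big(C\int_0^T\|X^{\vare}_s\|_1^{4/3}ds\big)$, i.e.\ the $H^1$-norm of the solutions along the whole path, whereas the paper's localization needs only their $H$-norms.

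That price is where the one genuine gap sits: after the Young reduction $\|x\|_1^{4/3}\leq C_\delta+\delta\|x\|_1^2$ you must integrate $\exp\big(\delta\int_0^T\|X^{\vare}_s\|_1^2\,ds\big)$ (and, for \eref{FA2}, $\exp\big(\delta\int_0^T\|\bar{X}_s\|_1^2\,ds\big)$) for the \emph{infinite-dimensional} solutions. The paper only proves the exponential estimates \eref{X11} and \eref{A.1.1} for the Galerkin solutions $X^{m,\vare}$ and $\bar{X}^m$, uniformly in $m$; the ``$m\to\infty$ analogue'' you invoke is not established anywhere, and deriving it from \eref{X11} by passing to the limit would presuppose the very convergence you are proving. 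Either you prove it directly by an infinite-dimensional It\^o argument for $e^{\delta Z_t}$ with $Z_t=|X^{\vare}_t|^2+\int_0^t\|X^{\vare}_s\|_1^2ds$ (nontrivial because of the Burgers term), or --- more in the spirit of the paper, and sufficient since only a qualitative limit is needed --- you localize with $\tau_R=\inf\{t:\int_0^t(\|X^{m,\vare}_s\|_1^2+\|X^{\vare}_s\|_1^2)ds\geq R\}$, use that $\int_0^T\|X^{\vare}_s\|_1^2ds<\infty$ a.s.\ to send $R\to\infty$ after $m\to\infty$, and dispense with exponential moments altogether. With that repair (and a short verification that the cross terms $b((I-\pi_m)X^{\vare},X^{m,\vare},W^{m,\vare})$, $b(X^{\vare},(I-\pi_m)X^{\vare},W^{m,\vare})$ are handled by Lemma \ref{Property B1} with indices $(\alpha_1,\alpha_2,\alpha_3)=(0.4,0,0.4)$, say, so that only $\|(I-\pi_m)X^{\vare}_s\|_1\to 0$ is needed), your argument closes.
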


\begin{proof}
Since the proofs of \eref{FA1} and \eref{FA2} almost the same,  we only prove \eref{FA1} here. Note that for any $t\geq 0$,
\begin{eqnarray*}
Y^{m,\vare}_t-Y^{\vare}_t=\!\!\!\!\!\!\!\!&&e^{tA/\vare}(y^m-y)+\frac{1}{\vare}\int^{t}_{0}e^{(t-s)A/\vare}(\pi_m-I)G(X^{\vare}_s, Y^{\vare}_s)ds \nonumber\\ &&+\frac{1}{\vare}\int^{t}_{0}e^{(t-s)A/\vare}[G^{m}(X^{m,\vare}_s,Y^{m,\vare}_s)-G^{m}(X^{\vare}_s,Y^{\vare}_s)]ds\\
&&+\frac{1}{\sqrt{\varepsilon}}\int^t_0 e^{(t-s)A/\vare}\sqrt{Q_2}d \bar{W}^{2,m}_s-\frac{1}{\sqrt{\varepsilon}}\int^t_0 e^{(t-s)A/\vare}\sqrt{Q_2}dW^2_s.
\end{eqnarray*}
It is clear that for any $p\geq 1$ and $\vare>0$,
\begin{eqnarray*}
|Y^{m,\vare}_t-Y^{\vare}_t|^p\leq\!\!\!\!\!\!\!\!&&C_p|y^m-y|^p+C_{p,\vare}\int^{t}_{0}|(\pi_m-I)G(X^{\vare}_s, Y^{\vare}_s)|^pds\nonumber\\
&&+C_{p,\vare}\int^t_0|G^{m}(X^{m,\vare}_s,Y^{m,\vare}_s)-G^{m}(X^{\vare}_s,Y^{\vare}_s)|^pds\nonumber\\
&&+C_{p,\vare}\left|\int^t_0 e^{(t-s)A/\vare}\sqrt{Q_2}d \bar{W}^{2,m}_s-\int^t_0 e^{(t-s)A/\vare}\sqrt{Q_2}dW^2_s\right|^p\nonumber\\
\leq\!\!\!\!\!\!\!\!&&C_p|y^m-y|^p+C_{p,\vare}\int^{t}_{0}|(\pi_m-I)G(X^{\vare}_s, Y^{\vare}_s)|^pds\nonumber\\
&&+C_{p,\vare}\int^t_0|X^{m,\vare}_s-X^{\vare}_s|^pds+C_{p,\vare}\int^t_0|Y^{m,\vare}_s-Y^{\vare}_s|^pds\nonumber\\
&&+C_{p,\vare}\left|\int^t_0 e^{(t-s)A/\vare}\sqrt{Q_2}d \bar{W}^{2,m}_s-\int^t_0 e^{(t-s)A/\vare}\sqrt{Q_2}dW^2_s\right|^p.
\end{eqnarray*}
Gronwall's inequality implies that for any $t\in [0,T]$,
\begin{eqnarray}
|Y^{m,\vare}_t-Y^{\vare}_t|^p\leq\!\!\!\!\!\!\!\!&&C_{p,T,\vare}|y^m-y|^p+C_{p,T,\vare}\int^{T}_{0}|(\pi_m-I)G(X^{\vare}_s, Y^{\vare}_s)|^pds\nonumber\\
&&+C_{p,T,\vare}\int^t_0\EE|X^{m,\vare}_s-X^{\vare}_s|^pds\nonumber\\
&&+C_{p,T,\vare}\sup_{t\in [0,T]}\left|\int^t_0 \!\!e^{(t-s)A/\vare}\sqrt{Q_2}d \bar{W}^{2,m}_s-\int^t_0 \!\!e^{(t-s)A/\vare}\sqrt{Q_2}dW^2_s\right|^p.\label{Y^m-Y}
\end{eqnarray}

By a simple computation, we get
\begin{eqnarray*}
X^{m,\vare}_t-X^{\vare}_t=\!\!\!\!\!\!\!\!&&e^{tA}(x^m-x)+\int^{t}_{0}e^{(t-s)A}(\pi_m-I)B(X^{\vare}_s)ds\nonumber\\
&&+\int^{t}_{0}e^{(t-s)A}\left[B^{m}(X^{m,\vare}_s)-B^{m}(X^{\vare}_s)\right]ds\nonumber\\
&&+\int^{t}_{0}e^{(t-s)A}(\pi_m-I)F(X^{\vare}_s, Y^{\vare}_s)ds\nonumber\\
&&+\int^{t}_{0}e^{(t-s)A}\left[F^{m}(X^{m,\vare}_s,Y^{m,\vare}_s)-F^{m}(X^{\vare}_s,Y^{\vare}_s)\right]ds\nonumber\\
&&+\int^t_0 e^{(t-s)A}\sqrt{Q_1}d \bar W^{1,m}_s-\int^t_0 e^{(t-s)A}\sqrt{Q_1}d  W^{1}_s.
\end{eqnarray*}

We now define the following stopping time
$$
\tau_k=\inf\{t\geq 0: |X^{m,\vare}_t|+|X^{\vare}_t|\geq k\}.
$$
Note that by Sobolev's embedding theorem,
\begin{eqnarray*}
\left|e^{(t-s)A}\left[B^{m}(X^{m,\vare}_s)-B^{m}(X^{\vare}_s)\right]\right|\leq\!\!\!\!\!\!\!\!&&C\left|e^{(t-s)A}\left[D_{\xi}\left((X^{m,\vare}_s)^2-(X^{\vare}_s)^2)\right)\right]\right|\\
\leq\!\!\!\!\!\!\!\!&&C\left\|e^{(t-s)A}\left[D_{\xi}\left((X^{m,\vare}_s)^2-(X^{\vare}_s)^2)\right)\right]\right\|_{H^{1/2, 1}}\\
\leq\!\!\!\!\!\!\!\!&&C\int^{t}_{0} (t-s)^{-\frac{3}{4}}\|(X^{m,\vare}_s+X^{\vare}_s)(X^{m,\vare}_s-X^{\vare}_s)\|_{L^1}ds,\\
\leq\!\!\!\!\!\!\!\!&&C\int^{t}_{0} (t-s)^{-\frac{3}{4}}(|X^{m,\vare}_s|+|X^{\vare}_s|)|X^{m,\vare}_s-X^{\vare}_s|ds.
\end{eqnarray*}
Then we have for any $p\geq 4$ and $\frac{1}{p}+\frac{1}{q}=1$,
\begin{eqnarray*}
&&\sup_{t\in [0,T\wedge \tau_k]}\left|\int^{t}_{0}e^{(t-s)A}\left[B^{m}(X^{m,\vare}_s)-B^{m}(X^{\vare}_s)\right]ds\right|\\
\leq\!\!\!\!\!\!\!\!&&C_k\sup_{t\leq T\wedge \tau_k}\left|\int^{t}_{0} (t-s)^{-\frac{3}{4}}|X^{m,\vare}_s-X^{\vare}_s|ds\right|\nonumber\\
\leq\!\!\!\!\!\!\!\!&&C_k\sup_{t\in [0,T\wedge \tau_k]}\left|\left[\int^{t}_{0} (t-s)^{-\frac{3 q}{4}}ds\right]^{1/q}\left[\int^t_0|X^{m,\vare}_s-X^{\vare}_s|^p ds\right]^{1/p}\right|\nonumber\\
\leq\!\!\!\!\!\!\!\!&&C_{k,T}\sup_{t\in [0,T\wedge \tau_k]}\left[\int^t_0|X^{m,\vare}_s-X^{\vare}_s|^p ds\right]^{1/p},
\end{eqnarray*}
which implies that
\begin{eqnarray}
\sup_{t\in [0,T\wedge \tau_k]}\left|\int^{t}_{0}e^{(t-s)A}\left[B^{m}(X^{m,\vare}_s)-B^{m}(X^{\vare}_s)\right]ds\right|^p
\leq\!\!\!\!\!\!\!\!&&C_{k,T} \int^{T\wedge \tau_k}_0|X^{m,\vare}_s-X^{\vare}_s|^p ds.
\end{eqnarray}
Hence by \eref{Y^m-Y}, we get that
\begin{eqnarray}
&&\sup_{t\in [0,T\wedge \tau_k]}|X^{m,\vare}_t-X^{\vare}_t|^p\nonumber\\
\leq\!\!\!\!\!\!\!\!&&C_p|x^m-x|^p+C_{p,T,\vare}|y^m-y|^p\nonumber\\
&&+C_{k,p,T,\vare}\int^{T\wedge \tau_k}_{0}|X^{m,\vare}_s-X^{\vare}_s|^p ds+\sup_{t\in [0,T\wedge \tau_k]}\left|\int^{t}_{0}e^{(t-s)A}(\pi_m-I)B(X^{\vare}_s)ds\right|^p\nonumber\\
&&+C_{p,T}\!\int^{T\wedge \tau_k}_{0}|(\pi_m-I)F(X^{\vare}_s, Y^{\vare}_s)|^pds\!+\!C_{p,T,\vare}\!\int^{T\wedge \tau_k}_{0}\!\!\EE|(\pi_m-I)G(X^{\vare}_s, Y^{\vare}_s)|^p ds\nonumber\\
&&+C_p\sup_{t\in [0,T]}\left|\int^t_0 e^{(t-s)A}\sqrt{Q_1}d \bar W^{1,m}_s-\int^t_0 e^{(t-s)A}\sqrt{Q_1}dW^1_s\right|^p\nonumber\\
&&+C_{p,T,\vare}\sup_{t\in[0,T]}\left|\int^t_0 e^{(t-s)A/\vare}\sqrt{Q_2}d \bar{W}^{2,m}_s-\int^t_0 e^{(t-s)A/\vare}\sqrt{Q_2}dW^2_s\right|^p.\label{X^m-X}
\end{eqnarray}
Then by Gronwall's inequality and taking expectation, we get
\begin{eqnarray}
&&\EE\left(\sup_{t\in [0,T\wedge \tau_k]}|X^{m,\vare}_t-X^{\vare}_t|^p\right)\nonumber\\
\leq\!\!\!\!\!\!\!\!&&C_{p,k,T,\vare}|x^m-x|^p+C_{p,T}|y^m-y|^p+C_{p,k,T,\vare}\EE\left|\int^{T}_{0}(\pi_m-I)|B(X^{\vare}_s)|ds\right|^p\nonumber\\
&&+C_{p,k,T,\vare}\!\int^{T}_{0}\!\!\EE|(\pi_m-I)F(X^{\vare}_s, Y^{\vare}_s)|^pds\!+\!C_{p,T}\!\int^{T}_{0}\!\!\EE|(\pi_m-I)G(X^{\vare}_s, Y^{\vare}_s)|^p ds\nonumber\\
&&+C_{p,k,T,\vare}\EE\left[\sup_{t\in [0,T]}\left|\int^t_0 e^{(t-s)A}\sqrt{Q_1}d \bar W^{1,m}_s-\int^t_0 e^{(t-s)A}\sqrt{Q_1}dW^1_s\right|^p\right]\nonumber\\
&&+C_{p,k,T,\vare}\EE\left[\sup_{t\in[0,T]}\left|\int^t_0 e^{(t-s)A/\vare}\sqrt{Q_2}d \bar{W}^{2,m}_s-\int^t_0 e^{(t-s)A/\vare}\sqrt{Q_2}dW^2_s\right|^p\right].\label{X^m-X}
\end{eqnarray}

By the a priori estimate of $(X^{\vare}_s, Y^{\vare}_s)$ in \eref{L6.1} and the dominated convergence theorem, we have
$$\lim_{m\rightarrow \infty}\EE\left|\int^{T}_{0}(\pi_m-I)|B(X^{\vare}_s)|ds\right|^p=0,$$
$$\lim_{m\rightarrow \infty}\int^{T}_{0}\EE|(\pi_m-I)F(X^{\vare}_s, Y^{\vare}_s)|^pds=0,$$
$$\lim_{m\rightarrow \infty}\int^{T}_{0}\EE|(\pi_m-I)G(X^{\vare}_s, Y^{\vare}_s)|^p ds=0,$$
$$\lim_{m\rightarrow \infty}\EE\left[\sup_{t\in [0,T]}\left|\int^t_0 e^{(t-s)A}\sqrt{Q_1}d \bar W^{1,m}_s-\int^t_0 e^{(t-s)A}\sqrt{Q_1}dW^1_s\right|^p\right]=0,$$
$$\lim_{m\rightarrow \infty}\EE\left[\sup_{t\in [0,T]}\left|\int^t_0 e^{(t-s)A/\vare}\sqrt{Q_2}d \bar{W}^{2,m}_s-\int^t_0 e^{(t-s)A/\vare}\sqrt{Q_2}dW^2_s\right|^p\right]=0.$$
Summarizing the above, we get that for $\vare>0$,
\begin{eqnarray*}
\lim_{m\rightarrow \infty}\EE\left(\sup_{t\in [0,T\wedge \tau_k]}|X^{m,\vare}_t-X^{\vare}_t|^p\right)=0.
\end{eqnarray*}
Finally, by Chebyshev's inequality we get
\begin{eqnarray*}
&&\EE\left(\sup_{t\in [0,T]}|X^{m,\vare}_t-X^{\vare}_t|^p\right)\\
\leq\!\!\!\!\!\!\!\!&&\EE\left(\sup_{t\in [0,T\wedge \tau_k]}|X^{m,\vare}_t-X^{\vare}_t|^p\right)+\left[\PP(T>\tau_k)\right]^{1/2}\left[\EE\left(\sup_{t\in [0,T]}|X^{m,\vare}_t-X^{\vare}_t|^{2p}\right)\right]^{1/2}\nonumber\\
\leq\!\!\!\!\!\!\!\!&&\EE\left(\sup_{t\in [0,T\wedge \tau_k]}|X^{m,\vare}_t-X^{\vare}_t|^p\right)+\left[\frac{\EE\left(\sup_{t\in [0,T]}\left(|X^{m,\vare}_t|+|X^{\vare}_t|\right)\right)}{k}\right]^{1/2}\\
&&\cdot\left[\EE\left(\sup_{t\in [0,T]}|X^{m,\vare}_t-X^{\vare}_t|^{2p}\right)\right]^{1/2}.
\end{eqnarray*}
Letting $m\rightarrow \infty $ firstly, then $k\rightarrow \infty$. We final obtain that \eref{FA1} holds. The proof is complete.
\end{proof}

\vspace{0.3cm}

\textbf{Acknowledgment}.
Peng Gao is supported by the Fundamental Research Funds for the Central Universities (2412020FZ022).
Xiaobin Sun is supported by the National Natural Science Foundation of China (11931004, 12090011), the QingLan Project and the Priority Academic Program Development of Jiangsu Higher Education Institutions.

\end{document}